\newcommand{\mbb}[1]{\mathbb{#1}}
\newcommand{\mbf}[1]{\mathbf{#1}}
\newcommand{\mc}[1]{\mathcal{#1}}
\newcommand{\tr}{\textup{tr}}
\newcommand{\Tr}[1]{\left\langle#1\right\rangle}
\newcommand{\wt}[1]{\widetilde{#1}}
\renewcommand{\det}{\textup{det}}
\renewcommand{\Re}{\textup{Re}}
\renewcommand{\Im}{\textup{Im}}
\numberwithin{equation}{section}
\newtheorem{theorem}{Theorem}[section]
\newtheorem{lemma}{Lemma}[section]
\newtheorem{cor}{Corollary}[section]
\newtheorem{prop}{Proposition}[section]
\theoremstyle{remark}
\theoremstyle{definition}
\title{Bulk Universality for Complex Non-Hermitian Matrices with Independent and Identically Distributed Entries}
\author{Anna Maltsev and Mohammed Osman\footnote{mohammed.osman@qmul.ac.uk}}
\date{\small Queen Mary, University of London}
\begin{document}

\maketitle

\abstract{We consider $N\times N$ matrices with complex entries that are perturbed by a complex Gaussian matrix with small variance. We prove that if the unperturbed matrix satisfies certain local laws then the bulk correlation functions are universal in the large $N$ limit. Assuming the entries are independent and identically distributed (iid) with a common distribution that has finite moments, the Gaussian component is removed by the four moment theorem of Tao and Vu.}

\section{Introduction}
The concept of universality in random matrix theory is the idea that various spectral properties of random matrices do not depend on the particulars of the randomness of the matrix entries. Universality is what allows random matrix theory to be applicable in a wide variety of contexts. Two well-known examples are in data analytics (e.g., Principal Component Analysis), where data and noise come from some unspecified distribution, and in nuclear physics where the randomness models the complexity of a large atomic nucleus. Mathematically the Wigner-Mehta-Dyson Universality Conjecture was formulated in Mehta's book in 1990 for Hermitian matrices \cite{mehta_random_1990}. This conjecture is akin to the Central Limit Theorem in statistics as it states that the limiting properties of some aggregate of a large number of random variables are independent of the distribution of the latter.

While Mehta found non-Hermitian random matrices to be of ``no immediate physical interest" {\cite[Chapter 13]{mehta_random_1990}}, it is now understood that such matrices model open chaotic systems. Additionally, with the recent rise of large data and non-symmetric networks, e.g., the explicit mapping of neuronal networks such as in \cite{cook_whole-animal_2019}, one expects that non-Hermitian random matrix models will pave the way for our further understanding in applied fields. As real-world data and networks exhibit complexity and randomness with unknown non-Gaussian properties, it is the universality of non-Hermitian random ensemble's spectral properties that will allow this mathematical theory to be relevant.

Proving universality in non-Hermitian ensembles has proved to be a challenge, and our understanding of non-Hermitian universality lags behind its Hermitian counterpart. The main difficulties are the instability of the eigenvalues under perturbation and the lack of unitarity in the eigenvectors. While the first proof in the non-Hermitian analogue of the Wigner ensemble (i.e. the random matrix whose entries are i.i.d. random variables) appeared in the paper of Tao and Vu \cite{tao_random_2015} soon after proofs of universality in Hermitian ensembles, they require the first four moments of the given distribution to match those of the Gaussian. In the Hermitian case, an analogous four-moment theorem could be combined with a proof of universality for matrices with a small Gaussian component in order to remove the moment-matching condition. We use the same idea to prove universality for non-Hermitian matrices with i.i.d. complex-valued entries from any distribution that has finite moments.

To be precise, in the Hermitian case, the well-established ``3-step strategy", due to \cite{erdos_bulk_2010} and by which universality has been proven for very general classes of matrices, consists of three steps:
\begin{enumerate}
\item a local law for the trace of the resolvent $G\left(z\right)=(W-z)^{-1}$ for $|\Im z|\geq N^{-1+\epsilon}$;
\item universality for Gauss-divisible matrices, i.e. matrices of the form $W_{t}=W+\sqrt{t}V$, where $W$ is a matrix satisfying the local law, $V$ is Gaussian and $t$ is small;
\item comparison between the statistics of $W_{t}$ and $W$.
\end{enumerate}
It is natural to ask whether the three-step strategy can be adapted to prove universality for non-Hermitian matrices. The goal of this paper is to give an affirmative answer in the case of matrices with i.i.d. complex entries. In fact, Steps 1 and 3 have already been adapted: for the former there are the local laws of Bourgade, Yau and Yin \cite{bourgade_local_2014} and Alt, Erd\H{o}s and Kr\"{u}ger \cite{alt_local_2018,alt_spectral_2021}; for the latter there is the four moment theorem of Tao and Vu \cite{tao_random_2015}. We also note that the method of time reversal from \cite{erdos_bulk_2010} immediately generalises to non-Hermitian matrices, since it is based on an approximation in the joint distribution of all matrix entries.

We are therefore concerned with the second step, namely universality for Gauss-divisible non-Hermitian matrices. For complex Hermitian matrices, the first work in this direction was by Johansson \cite{johansson_universality_2001}, who performed an asymptotic analysis of an explicit integral formula for the correlation function of Gauss-divisible matrices. Central to the derivation of this formula is the Harish-Chandra--Itzykson--Zuber (HCIZ) integral, which allows one to integrate out the eigenvectors. It turns out that by a different approach one can derive an explicit integral formula for non-Hermitian matrices that, although slightly more complicated, is still amenable to asymptotic analysis. In place of the HCIZ integral we use a combination of a change of variables and supersymmetry. By performing $k$ successive Householder transformations, we obtain a block upper triangular matrix whose first $k$ diagonal elements are any given $k$-tuple of eigenvalues. We can then relate the correlation function of $k$ eigenvalues to an integral over the remaining variables of this transformation. The Jacobian of this transformation leads to the expectation of a product of characteristic polynomials, for which we use supersymmetry to obtain an alternative integral representation that can be analysed by Laplace's method.

The idea of using a partial Schur decomposition to calculate eigenvalue densities goes back to Edelman, Kostlan and Shub \cite{edelman_how_1994}, who used a single Householder transformation to calculate the expected number of real eigenvalues for a real Gaussian matrix. It has also been used by Fyodorov and Khoruzhenko \cite{fyodorov_absolute_2007} to calculate the density of the imaginary parts of eigenvalues of rank one anti-Hermitian perturbations of the GUE, and by Fyodorov \cite{fyodorov_statistics_2018} to calculate the density of eigenvector overlaps in the Ginibre ensemble. The partial Schur decomposition with $k>1$ Householder transformations was used in the thesis of the second author to calculate the correlation functions of low rank anti-Hermitian perturbations of the GUE \cite{osman_thesis_2022}. On the other hand, a full Schur decomposition was used by Liu and Zhang \cite{liu_phase_2022} to study finite rank deformations of the Ginibre ensemble.  In eq. (1.9) of Proposition 1.3 in their paper, they obtain in this way an integral formula for the $k$-point correlation functions of matrices $A+B$, where $A$ is a fixed matrix of rank $r\leq N-k$ and $B$ is a Ginibre matrix. By partial Schur decomposition we obtain a different formula (see \eqref{eq:kpointint} below) that holds without restriction on the rank of $A$.

The most general approach to universality for Gauss-divisible Hermitian matrices, due to Erd\H{o}s, Schlein and Yau \cite{erdos_universality_2011}, goes via Dyson Brownian Motion, i.e. the system of stochastic differential equations (SDEs) governing the evolution of the eigenvalues of $W_{t}$ in time $t$. For non-Hermitian matrices, the eigenvalues do not satisfy a closed system of SDEs but instead are coupled with the inner products between right and left eigenvectors \cite{grela_full_2018}. The analysis of the whole system of equations is therefore much more involved and we do not pursue this route.

We note that the universality of the correlation functions at the edge (i.e. the unit circle) has been established by Cipolloni, Erd\H{o}s and Schr\"{o}der \cite{cipolloni_edge_2021}. Their result holds for both real and complex matrices without the need for matching moments. Therefore in this paper we are concerned only with correlations in the bulk.

\section{Main Results}
To state our results, we introduce some notation and definitions. The set of $n\times n$ complex (respectively complex Hermitian) is denoted by $\mbf{M}_{n}$ (respectively $\mbf{M}^{sa}_{n}$). {The unitary group of dimension $n$ is denoted by $\mbf{U}_{n}$. The $n\times n$ identity matrix is denoted by $1_{n}$, although the subscript is sometimes suppressed when there is no ambiguity (i.e. when it appears in an expression with another matrix whose dimension is known).} Let  $\|A\|$ denote the operator norm, $\left\langle A\right\rangle=N^{-1}\tr A$ and $|A|=\sqrt{AA^{*}}$. For $\mbf{v}\in\mbb{C}^{N}$,\,$A\in\mbf{M}_{N}$ we denote by $d\mbf{v},\,dA$ the respective Lebesgue measures.

As is standard in the analysis of non-Hermitian random matrices, we define the Hermitisation $\mc{H}_{z}$ of $A$ by
\begin{align}\label{eq:hermitisation}
    \mc{H}_{z}&:=\begin{pmatrix}0&A-z\\A^{*}-\bar{z}&0\end{pmatrix},
\end{align}
for some $z\in\mbb{C}$. Here and in the following we denote scalar multiples of the identity by the scalar alone. Define the block matrices
\begin{align}
    E&:=\begin{pmatrix}0&1\\0&0\end{pmatrix}\otimes1_{N},\label{eq:rho}\\
    E_{+}&:=\begin{pmatrix}1&0\\0&0\end{pmatrix}\otimes1_{N},\label{eq:rhoPlus}\\
    E_{-}&:=\begin{pmatrix}0&0\\0&1\end{pmatrix}\otimes1_{N}.\label{eq:rhoMinus}
\end{align}
{Alongside} the resolvent $G_{z}\left(\eta\right)=(\mc{H}_{z}-i\eta)^{-1}$, we also define the resolvents
\begin{align}
    H_{z}\left(\eta\right):=\left(\eta^{2}+(A-z)(A-z)^{*}\right)^{-1},\label{eq:G}\\
    \wt{H}_{z}\left(\eta\right):=\left(\eta^{2}+(A-z)^{*}(A-z)\right)^{-1}\label{eq:H},
\end{align}
which appear in the block matrix representation of $ G_{z}$:
\begin{align}
     G_{z}\left(\eta\right)&=\begin{pmatrix}i\eta H_{z}\left(\eta\right)&H_{z}\left(\eta\right)(A-z)\\(A-z)^{*}H_{z}\left(\eta\right)&i\eta \wt{H}_{z}\left(\eta\right)\end{pmatrix}.
\end{align}
The following traces will be needed in the sequel:
\begin{align}
    g_{z}\left(\eta\right)&:=\eta\left\langle H_{z}\left(\eta\right)\right\rangle,\label{eq:g}\\
    \alpha_{z}\left(\eta\right)&:=\eta^{2}\left\langle H_{z}\left(\eta\right)\wt{H}_{z}\left(\eta\right)\right\rangle,\label{eq:alpha}\\
    \beta_{z}\left(\eta\right)&:=\eta\left\langle H^{2}_{z}\left(\eta\right)(A-z)\right\rangle,\label{eq:beta}\\
    \gamma_{z}\left(\eta\right)&:=\eta^{2}\left\langle H^{2}_{z}\left(\eta\right)\right\rangle.\label{eq:gamma}
\end{align}
In terms of $ G_{z}$ we have
\begin{align*}
    g_{z}\left(\eta\right)&=\frac{1}{2}\Im\left\langle G_{z}\left(\eta\right)\right\rangle,\\
    \alpha_{z}\left(\eta\right)&=-\left\langle G_{z}\left(\eta\right)E G_{z}\left(\eta\right)E^{*}\right\rangle,\\
    \beta_{z}\left(\eta\right)&=\frac{1}{2i}\left\langle G^{2}_{z}\left(\eta\right)E^{*}\right\rangle,\\
    \gamma_{z}\left(\eta\right)&=\frac{1}{4i\eta}\left\langle G_{z}\left(\eta\right)(1-i\eta G_{z}\left(\eta\right))\right\rangle.
\end{align*}
These quantities will appear in the following combination:
\begin{align}
    \sigma_{z}\left(\eta\right)&:=\alpha_{z}(\eta)+\frac{|\beta_{z}(\eta)|^{2}}{\gamma_{z}(\eta)}\label{eq:sigma}.
\end{align}
Note that
\begin{align*}
    \alpha_{z}(\eta)&=\frac{1}{4}\Delta_{z}\log|\det(\mc{H}_{z}-i\eta)|
\end{align*}
is a smoothed version of the eigenvalue density on a scale $\eta$.

Now we come to the assumptions we make on $A$. {Fix $\epsilon>0$. Then for any $\omega>0$ there are constants $c,C>0$ depending on $\epsilon$ and $\omega$ but not on $N$ such that for all $z\in\mbb{C}$ with $|z|<1-\omega$ and all $\eta>N^{-1/2+\epsilon}$:} 
\paragraph{A1:} We have
\begin{align}
    \tag{A1.1}
    c\leq g_{z}\left(\eta\right)&\leq C,\label{eq:A_g}\\
    \tag{A1.2}
    c\leq\alpha_{z}\left(\eta\right)&\leq C,\label{eq:A_alpha}\\
    \tag{A1.3}
    |\beta_{z}\left(\eta\right)|&\leq C,\label{eq:A_beta}\\
    \tag{A1.4}
    c\leq\eta\gamma_{z}\left(\eta\right)&\leq C.\label{eq:A_gamma}
\end{align}
\paragraph{A2:} For $B_{i}\in\{E,E^{*}\}$ and $\eta_{i}\geq\eta$,
\begin{align}
    \tag{A2}
    \left|\left\langle G_{z}(\eta_{1})B_{1}G_{z}(\eta_{2})B_{2}\right\rangle\right|&\leq C.\label{eq:A2}
\end{align}
\paragraph{A3:} For $B_{i}\in\{E,E^{*}\}$ and $\eta_{i}\geq\eta$,
\begin{align}
    \tag{A3}
    \left|\left\langle G_{z}(\eta_{1})B_{1}G_{z}(\eta_{2})B_{2}G_{z}(\eta_{3})B_{3}\right\rangle\right|&\leq\frac{C}{\eta}.\label{eq:A3}
\end{align}

Gauss-divisible matrices are defined by
\begin{align}
    M_{t}&=A+\sqrt{t}B,\label{eq:GaussDivisible}
\end{align}
where $B$ is a complex Ginibre matrix, i.e. a matrix with i.i.d. complex Gaussian entries with mean zero and variance $1/N$. Recall that the $k$-point correlation functions of a point process $\{z_{1},z_{2},...\}$ are defined by
\begin{align}
    \mbb{E}\left[\sum_{i_{1}\neq\cdots\neq i_{k}}f(z_{i_{1}},...,z_{i_{k}})\right]&=:\int_{\mbb{C}^{k}}f(\mbf{z})\rho^{(k)}(\mbf{z})d\mbf{z},\label{eq:kpointdef}
\end{align}
for all smooth and compactly supported $f$ {(in the singular case one replaces $\rho^{(k)}(\mbf{z})d\mbf{z}$ with a correlation measure $d\rho^{(k)}(\mbf{z})$)}. For deterministic $A$, the correlation functions of $M_{t}$ are denoted by $\rho^{\left(k\right)}_{t}\left(\mbf{z};A\right)$. The limiting bulk correlation functions of the complex Ginibre ensemble are denoted by $\rho^{\left(k\right)}_{GinUE}$ and given by {\cite{ginibre_statistical_1965}}:
\begin{align}
    \rho^{\left(k\right)}_{GinUE}\left(\mbf{z}\right)&:=\det\left[\frac{1}{\pi}e^{-\frac{1}{2}\left(|z_{j}|^{2}+|z_{l}|^{2}\right)+\bar{z}_{j}z_{l}}\right].
\end{align}
The first result concerns the pointwise limit of $\rho^{\left(k\right)}_{t}$.
\begin{theorem}\label{thm1}
Let $\epsilon>0$ and $t\geq N^{-1/3+\epsilon}$. Let $A$ satisfy \eqref{eq:A_g}, \eqref{eq:A_alpha}, \eqref{eq:A_beta}, \eqref{eq:A_gamma} and \eqref{eq:A2}, $B$ be a complex Ginibre matrix, and $M_{t}=A+\sqrt{t}B$. Then for any {fixed} $z\in\mbb{C}$ such that $|z|<1$, there is a unique $\eta_{z,t}>0$ such that
\begin{align}
    t\langle H_{z}\left(\eta_{z,t}\right)\rangle&=1.
\end{align}
Let $\rho^{(k)}_{t}$ denote the $k$-point correlation function of $M_{t}$. Then
\begin{align}\label{eq:thm1}
    \lim_{N\to\infty}\frac{1}{\left(N\sigma_{z,t}\right)^{k}}\rho^{\left(k\right)}_{t}\left(z+\frac{\mbf{z}}{\sqrt{N\sigma_{z,t}}};A\right)&=\rho^{\left(k\right)}_{GinUE}\left(\mbf{z}\right),
\end{align}
uniformly on compact subsets of $\mbb{C}^{k}$, where $\sigma_{z,t}:=\sigma_{z}(\eta_{z,t})$. If in addition $A$ satisfies \eqref{eq:A3} then the conclusion holds for $t\geq N^{-1/2+\epsilon}$.
\end{theorem}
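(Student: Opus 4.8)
The plan is to derive an exact integral representation for the $k$-point correlation function $\rho^{(k)}_t(\mbf{w};A)$ of $M_t=A+\sqrt{t}B$ and then analyse it asymptotically by Laplace's method. First I would exploit the fact that, conditionally on $B$, the matrix $M_t$ has a density proportional to $\exp(-\tfrac{N}{t}\tr(M_t-A)(M_t-A)^*)$ on $\mbf{M}_N$. By performing $k$ successive Householder transformations one brings $M_t$ to block upper-triangular form whose first $k$ diagonal entries are prescribed eigenvalues $w_1,\dots,w_k$; the Jacobian of this change of variables produces a factor involving $\prod_{j}|\det(\wt M^{(j)}-w_j)|^2$, where $\wt M^{(j)}$ are the successive lower-right blocks. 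Integrating out all variables except the eigenvalues then expresses $\rho^{(k)}_t$ as a Gaussian-weighted integral of an expectation of a product of characteristic polynomials. The next step is to rewrite that expectation of a ratio/product of characteristic polynomials of the shifted matrix $\mc{H}_{w}$ (with $w=z+\mbf{z}/\sqrt{N\sigma_{z,t}}$) using the supersymmetric (Grassmann) integral representation, obtaining a finite-dimensional integral over a supermatrix whose bosonic part is controlled by the self-consistent equation $t\langle H_z(\eta)\rangle=1$ defining $\eta_{z,t}$.

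The heart of the argument is then the saddle-point analysis. After the change of variables the integrand has the form $\exp(N F(\text{saddle variables}))$ times lower-order fluctuation factors, and one checks that $F$ has a critical point at the value $\eta=\eta_{z,t}$ singled out in the statement; assumptions \eqref{eq:A_g} and \eqref{eq:A_alpha} guarantee this critical point exists, is unique and non-degenerate, while \eqref{eq:A_beta}, \eqref{eq:A_gamma} ensure the quantity $\sigma_z(\eta_{z,t})$ that sets the local scale is bounded above and below. The contribution of the Gaussian fluctuations around the saddle, together with the Grassmann (fermionic) integration, reproduces exactly the determinantal kernel $\frac{1}{\pi}e^{-\frac12(|z_j|^2+|z_l|^2)+\bar z_j z_l}$: the off-diagonal structure of $E,E_\pm$ and the block form of $G_z(\eta)$ feed into the Hessian precisely so that the fermionic determinant collapses to the Ginibre form. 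Here the local law inputs \eqref{eq:A2} (and \eqref{eq:A3} for the improved range $t\geq N^{-1/2+\epsilon}$) are used to bound the error terms coming from expanding $A$-dependent traces around their self-consistent values on the relevant mesoscopic scale $\eta\sim\sqrt{t}$; the threshold $t\geq N^{-1/3+\epsilon}$ is exactly what is needed for these error bounds to be $o(1)$ using only two-$G$ estimates, whereas three-$G$ estimates buy a factor $\eta^{-1}$ that relaxes the constraint.

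The main obstacle I anticipate is controlling the integral away from the saddle point — i.e. showing the contribution of the non-critical region of the supersymmetric integral is exponentially negligible — uniformly in $\mbf{z}$ on compact sets and uniformly over all admissible $A$ (only via the quantitative bounds A1--A3, not the precise law of $A$). This requires a careful deformation of the contours of the bosonic variables to pass through the saddle in the direction of steepest descent while keeping the Grassmann sector under control, and ruling out spurious saddles; the non-Hermiticity means the relevant matrix $\mc{H}_w$ is genuinely $z$-dependent and the naive contour can hit singularities of the integrand. A secondary technical point is justifying the Householder change of variables and the attendant Jacobian rigorously when some of the $w_j$ are close together (the regime that produces the determinantal correlations), which I would handle by a regularisation/limiting argument. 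Once the dominant term is identified, matching it to $\rho^{(k)}_{GinUE}$ is a direct computation with the Ginibre kernel, and uniformity on compacts follows since all estimates are locally uniform in $\mbf{z}$.
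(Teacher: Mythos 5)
Your plan matches the paper's overall strategy at the top level: derive an integral formula via $k$ successive Householder transformations (the partial Schur decomposition), convert the resulting expectation of a product of characteristic polynomials into a finite-dimensional integral using Grassmann variables plus a Hubbard--Stratonovich transformation, and then analyse by Laplace's method around the saddle fixed by $t\langle H_z(\eta_{z,t})\rangle = 1$. There are, however, two substantive gaps.

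First, the Schur decomposition does not leave you with only a ``Gaussian-weighted expectation of characteristic polynomials.'' After integrating out the $\mbf{w}_i$ and the lower-right block $M^{(k)}$, you are left with $k$ nested integrals over the spheres $S^{N-i}$ in the eigenvector directions $\mbf{v}_i$, and the characteristic-polynomial factor $F$ still depends on the $\mbf{v}_i$ through the projected matrices $A^{(k)}$. These spherical integrals $K_i$ constitute a second, independent Laplace analysis. The paper handles them with a dedicated lemma reducing a spherical integral to a one-dimensional Fourier-type integral (Lemma 2.3, itself another Hubbard--Stratonovich step), and then proves concentration of the quadratic forms $\mbf{v}_j^*H^{(j-1)}_{w_j}\mbf{v}_j$ and relatives over the measure $\nu_j$ on the sphere (Section 6), so that the $\mbf{v}_j$-dependent factors become deterministic to leading order. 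This piece is not a formality: without it one cannot decouple the sphere integrals from the characteristic-polynomial expectation, because the saddle location depends on $A^{(i)}$, which depends on all preceding $\mbf{v}_j$, and one needs the minor-resolvent estimates and interlacing to transfer the local laws from $A$ to $A^{(i)}$.

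Second, the mechanism that produces the Ginibre determinant is not ``the Grassmann integration collapses to the Ginibre kernel.'' After the Grassmann/HS step one has an integral over $X\in\mbf{M}_k$; the paper restricts to the region where the singular values of $X$ lie near $\eta_{z,t}$ and the off-diagonal overlaps $|P_{ij}|$ are small, by applying Fischer's inequality twice, and after a change of variables to the singular value decomposition of $X$ plus Taylor expansion, the determinant $\det[e^{z_j\bar z_l}]$ appears from the Harish-Chandra--Itzykson--Zuber formula evaluated on a residual $U(k)$ integral. Your anticipated main obstacle --- deforming bosonic contours past singularities --- does not arise in the paper's proof: all contours stay real, and the far region of the $X$-integral is killed by Fischer's inequality combined with the lower bound on $\gamma_{z,t}$ from \eqref{eq:A_gamma}. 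One more detail: from \eqref{eq:A_g} and $t\langle H_z(\eta_{z,t})\rangle=1$ one obtains $\eta_{z,t}\asymp t$, not $\eta\sim\sqrt{t}$ as you write.
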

We remark that the assumptions \eqref{eq:A_alpha}, \eqref{eq:A_beta} and \eqref{eq:A_gamma} imply that $\sigma_{z,t}=\alpha_{z,t}+O(t)$, and since $\alpha_{z,t}$ approximates the eigenvalue density on a scale $t$ the rescaling by $\sigma_{z,t}$ in \Cref{thm1} is natural.

Using this theorem, the local law and the four moment theorem we can remove the Gaussian component and obtain the following.
\begin{theorem}\label{thm2}
{Let $\xi_{ij},\, i,j=1,...,N$ be i.i.d. random variables with zero mean, unit variance and finite moments, and $A=\frac{1}{\sqrt{N}}(\xi_{ij})_{i,j=1}^{N}$.} Then for any $f\in C_{c}^{\infty}\left(\mbb{C}^{k}\right)$ and {fixed} $z\in\mbb{C}$ such that $|z|<1$ we have
\begin{align}\label{eq:thm2}
    \lim_{N\to\infty}\mbb{E}\left[\sum_{i_{1}\neq\cdots\neq i_{k}}f\left(\sqrt{N}(z_{i_{1}}-z),...,\sqrt{N}(z_{i_{k}}-z)\right)\right]&=\int_{\mbb{C}^{k}}f\left(\mbf{z}\right)\rho^{\left(k\right)}_{GinUE}\left(\mbf{z}\right)d\mbf{z},
\end{align}
where $z_{i}$ are the eigenvalues of $A$.
\end{theorem}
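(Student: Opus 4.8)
The plan is to run the three-step strategy of the introduction in reverse: first realise $A/\sqrt N$, up to a matching of the first four entry moments, as a Gauss-divisible matrix $\wt A+\sqrt t B$ with $\wt A$ again having i.i.d.\ entries; then apply \Cref{thm1} to $\wt A$ to obtain Ginibre statistics for $\wt A+\sqrt t B$; and finally remove the Gaussian component with the four moment theorem. Since $z$ is fixed with $|z|<1$, fix $\kappa>0$ with $|z|\le1-\kappa$ and set $t=N^{-1/2+\epsilon}$ for a small $\epsilon>0$ to be chosen. The first step is a moment-matching construction as in the Hermitian theory \cite{tao_random_2015,erdos_universality_2011}: since the real and imaginary parts of the entries $A_{jk}$ are independent, centred, with finite moments and variances summing to $1$, for $N$ large there is a matrix $\wt A$ with i.i.d.\ entries of variance $(1-t)/N$, independent real and imaginary parts and bounded support such that, with $B$ an independent complex Ginibre matrix, the entries of $M_t:=\wt A+\sqrt t B$ match those of $A/\sqrt N$ exactly in the first three moments of the real and of the imaginary parts and in the fourth moments up to $O(t)$. (Exact four-moment matching is impossible when a part of the entry law is supported on two points; the $O(t)$ slack is obtained by choosing $\wt A$ with a slightly larger fourth moment, and it will be absorbed by the tolerance of the four moment theorem once $\epsilon$ is small.) Note that $M_t$ then has i.i.d.\ entries of variance $1/N$.

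Next I would verify that $\wt A$ meets the hypotheses of \Cref{thm1}. Since $\wt A$ has i.i.d.\ entries of bounded support and variance $\asymp1/N$, the local laws for the Hermitisation of i.i.d.\ matrices \cite{bourgade_local_2014,alt_local_2018,alt_spectral_2021} apply, and from them one extracts that on an event $\Omega_N$ with $\mbb{P}(\Omega_N^{c})\le N^{-D}$ for every $D$, the matrix $\wt A$ satisfies \eqref{eq:A_g}--\eqref{eq:A3} with constants depending only on $\kappa,\epsilon$, uniformly for $|w|\le1-\kappa/2$ and $N^{-1/2+\epsilon}\le\eta\le10$. On $\Omega_N$ one has $\eta_{z,t}=t\,g_z(\eta_{z,t})\asymp t$ by \eqref{eq:A_g}, hence $\sigma_{z,t}=\alpha_{z,t}+O(t)$ by \eqref{eq:A_beta} and \eqref{eq:A_gamma} (see the remark after \Cref{thm1}), while $\alpha_{z,t}\to1$ by the local circular law on scales $t\gg N^{-1/2}$; thus $\sigma_{z,t}=1+o(1)$ uniformly on $\Omega_N$.

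Conditioning on a realisation of $\wt A$ in $\Omega_N$, \Cref{thm1} applies to the deterministic matrix $\wt A$, and since the error in \eqref{eq:thm1} depends only on $\kappa,\epsilon$ and the compact set it is uniform over $\Omega_N$. Writing the conditional expectation of $\sum_{i_1\neq\cdots\neq i_k}f(\sqrt N(z_{i_1}(M_t)-z),\dots,\sqrt N(z_{i_k}(M_t)-z))$ as $\int f(\sqrt N(w_1-z),\dots,\sqrt N(w_k-z))\,\rho^{(k)}_t(\mbf{w};\wt A)\,d\mbf{w}$, changing variables $w_j=z+v_j/\sqrt{N\sigma_{z,t}}$, using that $f\in C_c^\infty$ has fixed compact support together with the uniform-on-compacts convergence of $(N\sigma_{z,t})^{-k}\rho^{(k)}_t$ to pass to the limit, and finally that $\sigma_{z,t}\to1$ and $\rho^{(k)}_{GinUE}$ is continuous, one obtains
\begin{align*}
\mbb{E}\Big[\sum_{i_1\neq\cdots\neq i_k}f\big(\sqrt N(z_{i_1}(M_t)-z),\dots,\sqrt N(z_{i_k}(M_t)-z)\big)\mbf{1}_{\Omega_N}\Big]\ \longrightarrow\ \int_{\mbb{C}^k}f(\mbf{v})\,\rho^{(k)}_{GinUE}(\mbf{v})\,d\mbf{v} .
\end{align*}
On $\Omega_N^{c}$ the sum is bounded by $\|f\|_\infty$ times the number of $k$-tuples of eigenvalues of $M_t$, hence by $\|f\|_\infty N^k$; since $\mbb{P}(\Omega_N^{c})=o(N^{-k})$, the contribution of $\Omega_N^{c}$ is $o(1)$, so the display holds with $\mbf{1}_{\Omega_N}$ removed. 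Finally $M_t$ and $A/\sqrt N$ are i.i.d.\ matrices of variance $1/N$ whose entry moments agree to third order and to fourth order up to $O(t)=N^{-1/2+\epsilon}$, so the four moment theorem of Tao and Vu \cite{tao_random_2015} — its comparison being insensitive to a fourth-moment mismatch of this polynomial size once $\epsilon$ is small enough, and alternatively being replaceable by the time-reversal method of \cite{erdos_bulk_2010} — shows that the left-hand side of \eqref{eq:thm2} differs from the expectation above by $o(1)$, which is the claim.

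The hardest part will be the passage from the cited local laws to the deterministic hypotheses \eqref{eq:A_g}--\eqref{eq:A3} for the i.i.d.\ matrix $\wt A$ with \emph{uniform} constants on a high-probability event — in particular extracting the three-resolvent bound \eqref{eq:A3} and the lower bounds in \eqref{eq:A_g}, \eqref{eq:A_alpha} and \eqref{eq:A_gamma} (which rest on the deterministic equivalents being bounded away from zero in the bulk) — together with the uniformity and dominated-convergence bookkeeping needed to upgrade the pointwise statement \eqref{eq:thm1} to the integrated statement \eqref{eq:thm2}. By comparison, the Gaussian-divisibility obstruction for two-point entry laws is a routine technicality handled by the fourth-moment slack.
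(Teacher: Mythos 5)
Your proof follows essentially the same strategy as the paper: verify A1--A2 for i.i.d.\ matrices with high probability via the averaged one- and two-resolvent local laws, apply \Cref{thm1} on that event, control the complement crudely by $N^{k}\cdot N^{-D}$, and then transfer from the Gauss-divisible matrix to $A/\sqrt N$ by moment matching plus the four moment theorem (with the four-moment mismatch absorbed into the $O(N^{-\epsilon})$ slack). The one substantive deviation is your choice $t=N^{-1/2+\epsilon}$, which forces you to invoke the second clause of \Cref{thm1} and hence to verify \eqref{eq:A3}. You correctly flag this as the hardest step of your route, but it is in fact unnecessary: the paper instead takes $t=N^{-1/3+\epsilon}$, for which the first clause of \Cref{thm1} applies with only \eqref{eq:A_g}--\eqref{eq:A2}, and these are exactly what \Cref{prop:singlell}, \Cref{prop:doublell} and \Cref{lem:M1M2} supply. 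The fourth-moment mismatch is then $O(N^{-1/3+\epsilon})$, which is still well within the $O(N^{-\epsilon})$ tolerance of the relaxed four moment theorem that the paper invokes (as in \cite{erdos_bulk_2010-1}). So extracting a three-resolvent local law is not actually needed, and the cited references do not readily supply it; as written, that part of your argument is a genuine gap, but one you can erase simply by following the paper's choice of $t$. Everything else — the moment-matching construction (the paper uses Lemma~3.4 of \cite{erdos_universality_2011-2} and the normalisation $(1+t)^{-1/2}(\wt A+\sqrt{Nt}B)$, equivalent to your $(1-t)/N$-variance formulation), the handling of $\sigma_{z,t}\to1$ via \Cref{lem:M1M2}, the conditioning and uniform-on-compacts passage to the limit — matches the paper's proof.
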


Let us end this section by a discussion of the optimal size of the variance $t$ of the Gaussian component. In the course of the proof of \Cref{thm1}, we find that we need to estimate traces of alternating products of $G_{z}\left(\eta\right)$ with matrices whose diagonal blocks are zero. In this case a recent result of Cipolloni, Erd\H{o}s, Henheik and Schr\"{o}der \cite{cipolloni_optimal_2023} shows that, for traces involving up to two resolvents, the error is smaller by a factor of $\sqrt{\eta}$ for each such off-diagonal matrix. In particular, for $\left\langle G_{z}B_{1}G_{z}B_{2}\right\rangle$, where the diagonal blocks of $B_{1}$ and $B_{2}$ are zero, the error is $O\left(1/N\eta\right)$. This reduction in the size of the error is analogous to the reduction that occurs when taking traces of resolvents of Wigner matrices alternating with traceless matrices that was shown in \cite{cipolloni_thermalisation_2022}. If this can be extended to traces of finite products $\langle\prod_{m=1}^{n}G_{z}(\eta_{m})B_{m}\rangle$, we might expect that the optimal $t$ is in fact $N^{-1+\epsilon}$, as in the Hermitian case. However, as part of the proof we need to transfer local laws from $A$ to a projection of $A$ onto a certain random subspace of codimension $k$, and in doing so we make use of the assumption $Nt^{2}\gg1$. Whether this can be relaxed is left for future work.

\section{Preliminaries}

We collect some lemmas that we will use in the rest of the paper. The following lemma relates the resolvent of a minor to that of the original matrix.
\begin{lemma}\label{lem:minorresolvent}
Let $A\in \mbf{M}_{N}$ {be non-singular}; $U=\left(U_{k},U_{N-k}\right)\in\mbf{U}_{N}$, where $U_{k}$ and $U_{N-k}$ are $N\times k$ and $N\times\left(N-k\right)$ respectively; and $B=U_{N-k}^{*}AU_{N-k}$ {be non-singular}. Then
\begin{align}\label{eq:minorresolvent}
    U\begin{pmatrix}0&0\\0&B^{-1}\end{pmatrix}U^{*}&=A^{-1}-A^{-1}U_{k}\left(U_{k}^{*}A^{-1}U_{k}\right)^{-1}U_{k}^{*}A^{-1},
\end{align}
and, if $\Re A>0$,
\begin{align}\label{eq:minornorm}
    \left\|A^{-1}U_{k}\left(U_{k}^{*}A^{-1}U_{k}\right)^{-1}U_{k}^{*}A^{-1}\right\|&\leq\left\|\left(\Re A\right)^{-1}\right\|.
\end{align}
If $\Im A>0$ \eqref{eq:minornorm} holds with $\left\|\left(\Im A\right)^{-1}\right\|$ on the right hand side.
\end{lemma}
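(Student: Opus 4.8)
The plan is to verify the algebraic identity \eqref{eq:minorresolvent} directly and then derive the norm bound \eqref{eq:minornorm} by recognizing the right-hand side as a compression of $A^{-1}$ onto a subspace in a way that is compatible with the positivity of $\Re A$.

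First I would establish \eqref{eq:minorresolvent}. Since $U$ is unitary, it suffices to conjugate both sides by $U$, i.e. to show that
\begin{align*}
    \begin{pmatrix}0&0\\0&B^{-1}\end{pmatrix}&=U^{*}A^{-1}U-U^{*}A^{-1}U_{k}\left(U_{k}^{*}A^{-1}U_{k}\right)^{-1}U_{k}^{*}A^{-1}U.
\end{align*}
Writing $C=U^{*}A^{-1}U$ in block form with blocks $C_{11}=U_{k}^{*}A^{-1}U_{k}$, $C_{12}=U_{k}^{*}A^{-1}U_{N-k}$, $C_{21}=U_{N-k}^{*}A^{-1}U_{k}$, $C_{22}=U_{N-k}^{*}A^{-1}U_{N-k}$, the right-hand side becomes $C$ minus the rank-$k$ correction, whose block structure is $\bigl(\begin{smallmatrix}C_{11}&C_{12}\\C_{21}&C_{21}C_{11}^{-1}C_{12}\end{smallmatrix}\bigr)$; so the difference is $\bigl(\begin{smallmatrix}0&0\\0&C_{22}-C_{21}C_{11}^{-1}C_{12}\end{smallmatrix}\bigr)$, the Schur complement. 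Thus the claim reduces to the identity $C_{22}-C_{21}C_{11}^{-1}C_{12}=B^{-1}$; but this is exactly the standard formula for the lower-right block of the inverse of the block matrix $U^{*}AU=\bigl(\begin{smallmatrix}U_k^*AU_k&U_k^*AU_{N-k}\\U_{N-k}^*AU_k&B\end{smallmatrix}\bigr)$ — indeed, $(C^{-1})_{22} = U_{N-k}^* A U_{N-k} = B$ is the block of $U^*AU$, and inverting the Schur-complement relation gives $C_{22}-C_{21}C_{11}^{-1}C_{12}=\bigl((U^*AU)_{22}\bigr)^{-1}=B^{-1}$. (One should note $C_{11}=U_k^*A^{-1}U_k$ is invertible whenever $\Re A>0$ or $\Im A>0$, since then $A^{-1}$ has definite Hermitian/anti-Hermitian part and hence so does its compression $C_{11}$.)

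Next, the norm bound. The matrix $P:=A^{-1}U_{k}\left(U_{k}^{*}A^{-1}U_{k}\right)^{-1}U_{k}^{*}A^{-1}$ is an (in general oblique) analogue of a spectral projector; the clean way to bound it is via the variational/quadratic-form characterization. For a vector $v$, write $\|P\| = \sup_{\|v\|=1} |\langle v, P v\rangle|$ — wait, $P$ need not be self-adjoint, so instead I would argue as follows. Set $W=U_k^*A^{-1}$, so $P=W^*(WA W^*)^{-1}\cdots$ — rather, observe that $P A^{-1} $ — the cleanest route: for any $x$, let $y = U_k(U_k^*A^{-1}U_k)^{-1}U_k^*A^{-1}x$, so that $Px = A^{-1}y$ and $U_k^*A^{-1}(x - y')=0$ for the appropriate vector; one checks $y$ lies in $\mathrm{range}(U_k)$ and $U_k^* A^{-1}(A P x - x)=0$... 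Let me instead use the direct estimate: with $\Re A>0$, for any unit vector $v$, $|\langle v, Pv\rangle|$ is controlled because $P = A^{-1}U_k(U_k^*A^{-1}U_k)^{-1}U_k^*A^{-1}$ satisfies $\langle w, P w\rangle = \langle \xi, (U_k^*A^{-1}U_k)^{-1}\xi\rangle$ with $\xi = U_k^*A^{-1}w$, and since $\Re(U_k^*A^{-1}U_k) = U_k^*\Re(A^{-1})U_k$ and $\Re(A^{-1}) = A^{-1}(\Re A)(A^{-1})^* \geq \|(\Re A)^{-1}\|^{-1} A^{-1}(A^{-1})^*$, a Cauchy–Schwarz argument bounds $|\langle w,Pw\rangle|$ by $\|(\Re A)^{-1}\|\,\|w\|^2$. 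The key algebraic input is $\Re(A^{-1}) = A^{-*}(\Re A)A^{-1} = A^{-1}(\Re A)A^{-*}$, which makes the positivity transfer transparent; the $\Im A>0$ case is identical with $\Re$ replaced by $\Im$ throughout.

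I expect the main obstacle to be the bound \eqref{eq:minornorm} rather than the identity: getting a clean, coordinate-free argument that $P$ has norm at most $\|(\Re A)^{-1}\|$ requires carefully exploiting that $P$ acts as the identity on its range and that the "oblique-ness" is harmless once measured in the inner product weighted by $\Re(A^{-1})$. Concretely, I would show $P$ is the $\langle\cdot,\Re(A^{-1})\,\cdot\rangle$-orthogonal projection onto $A^{-1}\,\mathrm{range}(U_k)$ — or rather extract from the structure that $\|Pw\|^2 \le \|(\Re A)^{-1}\| \cdot \Re\langle w, Pw\rangle \le \|(\Re A)^{-1}\|\,\|Pw\|\,\|w\|$, giving $\|Pw\|\le\|(\Re A)^{-1}\|\,\|w\|$ after dividing. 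The first inequality there is where all the work sits: it follows by writing $Pw = A^{-1}U_k\zeta$, computing $\|Pw\|^2 = \langle U_k\zeta, A^{-*}A^{-1}U_k\zeta\rangle \le \|(\Re A)^{-1}\|\,\langle U_k\zeta, \Re(A^{-1})U_k\zeta\rangle$ (using $A^{-*}A^{-1} \le \|(\Re A)^{-1}\|\Re(A^{-1})$, which is the rearranged form of $\Re(A^{-1}) = A^{-*}(\Re A)A^{-1}$), and then noting $\langle U_k\zeta,\Re(A^{-1})U_k\zeta\rangle = \Re\langle U_k\zeta, A^{-1}U_k\zeta \rangle \cdot(\text{const})$ reduces to $\Re\langle w, Pw\rangle$ by the defining property $U_k^*A^{-1}w = (U_k^*A^{-1}U_k)\zeta$.
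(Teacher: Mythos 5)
Your proof of the identity \eqref{eq:minorresolvent} is correct and takes a genuinely different route than the paper. The paper considers $(A + tU_kU_k^*)^{-1}$, sends $t\to\infty$, and invokes the Woodbury identity; you instead conjugate by $U$ and reduce the claim to the Schur-complement relation $\left((C^{-1})_{22}\right)^{-1} = C_{22} - C_{21}C_{11}^{-1}C_{12}$ applied to $C = U^*A^{-1}U$, whose inverse $U^*AU$ has $(2,2)$ block $B$. Both arguments are valid; yours avoids the limiting step and the auxiliary rank-$k$ perturbation.

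Your argument for the norm bound \eqref{eq:minornorm}, however, has a genuine gap at the step where you assert that $\langle U_k\zeta, \Re(A^{-1})U_k\zeta\rangle$ ``reduces to $\Re\langle w, Pw\rangle$''. Write $v = U_k\zeta$; the defining relation $(U_k^*A^{-1}U_k)\zeta = U_k^*A^{-1}w$ gives $v^*A^{-1}v = v^*A^{-1}w$, hence $\langle v, \Re(A^{-1})v\rangle = \Re(v^*A^{-1}w)$. But $\Re\langle w, Pw\rangle = \Re(w^*A^{-1}v) = \Re(v^*A^{-*}w)$, and for non-normal $A$ the quantities $v^*A^{-1}w$ and $v^*A^{-*}w$ are unrelated: the defining relation constrains $v^*A^{-1}(w-v)$, not $v^*A^{-*}(w-v)$. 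So the chain $\|Pw\|^2 \le \|(\Re A)^{-1}\|\,\Re\langle w, Pw\rangle \le \|(\Re A)^{-1}\|\,\|Pw\|\,\|w\|$ does not close as written. The approach is salvageable but requires two passes: from $\Re(A^{-1}) = A^{-1}(\Re A)A^{-*}$ one also has $\|A^{-*}v\|^2 \le \|(\Re A)^{-1}\|\,v^*\Re(A^{-1})v$, and since $v^*\Re(A^{-1})v = \Re(v^*A^{-1}w) \le |(A^{-*}v)^*w| \le \|A^{-*}v\|\,\|w\|$ one first deduces $\|A^{-*}v\| \le \|(\Re A)^{-1}\|\,\|w\|$, and then feeds this back into $\|A^{-1}v\|^2 \le \|(\Re A)^{-1}\|\,v^*\Re(A^{-1})v \le \|(\Re A)^{-1}\|\,\|A^{-*}v\|\,\|w\|$. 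The paper sidesteps this delicacy entirely by an algebraic factorization: writing $A = B + iC$ with $B = \Re A > 0$, it shows the matrix in question equals $B^{-1/2}\wt{V}_k\left(1-i\wt{V}_k^*H\wt{V}_k\right)^{-1}\wt{V}_k^*WB^{-1/2}$ with $\wt{V}_k$ an isometry, $H = B^{-1/2}CB^{-1/2}$ Hermitian, and $W = (1-iH)(1+iH)^{-1}$ unitary, after which the bound is immediate from submultiplicativity of the norm.
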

\begin{proof}
For the first statement we follow the argument of the blog post \cite{tao_inverting_2017}. Let
\begin{align*}
    U^{*}AU&=\begin{pmatrix}X&Y\\Z&B\end{pmatrix};
\end{align*}
for sufficiently large $t>0$ we have
\begin{align*}
    U^{*}\left(A+tU_{k}U_{k}^{*}\right)^{-1}U&=\begin{pmatrix}t+X&Y\\Z&B\end{pmatrix}^{-1}\\
    &=\begin{pmatrix}\left(t+X-YB^{-1}Z\right)^{-1}&-\left(t+X-YB^{-1}Z\right)^{-1}YB^{-1}\\
    -(B+Z\left(X+t\right)^{-1}Y)^{-1}Z\left(X+t\right)^{-1}&(B+Z\left(X+t\right)^{-1}Y)^{-1}\end{pmatrix}\\
    &\to\begin{pmatrix}0&0\\0&B^{-1}\end{pmatrix}\quad\text{as}\quad t\to\infty.
\end{align*}
On the other hand, by the Woodbury matrix identity we have
\begin{align*}
    \left(A+tU_{k}U_{k}^{*}\right)^{-1}&=A^{-1}-A^{-1}U_{k}\left(t^{-1}+U_{k}^{*}A^{-1}U_{k}\right)^{-1}U_{k}^{*}A^{-1}\\
    &\to A^{-1}-A^{-1}U_{k}\left(U_{k}^{*}A^{-1}U_{k}\right)^{-1}U_{k}^{*}A^{-1}\quad\text{as}\quad t\to\infty.
\end{align*}
Equating the two expressions we get \eqref{eq:minorresolvent}.

Now assume $A=B+iC$ with $B>0$. Let
\begin{align*}
    V_{k}&=B^{\frac{1}{2}}\left(B+iC\right)^{-1}U_{k},\\
    \widetilde{V}_{k}&=V_{k}\left(V_{k}^{*}V_{k}\right)^{-\frac{1}{2}},\\
    H&=B^{-\frac{1}{2}}CB^{-\frac{1}{2}},\\
    W&=\left(1-iH\right)(1+iH)^{-1};
\end{align*}
note that since $B>0$, $V_{k}^{*}V_{k}=U_{k}^{*}\left(B-iC\right)^{-1}B\left(B+iC\right)^{-1}U_{k}>0$ and hence $\widetilde{V}_{k}$ is well-defined. Moreover, $\widetilde{V}_{k}^{*}\widetilde{V}_{k}=1_{k}$, $H$ is Hermitian and $W$ is unitary. Denoting by $r$ the left hand side of \eqref{eq:minornorm}, we have
\begin{align*}
    r&=\left\|B^{-\frac{1}{2}}\tilde{V}_{k}\left(1-i\tilde{V}_{k}^{*}H\tilde{V}_{k}\right)^{-1}\tilde{V}_{k}^{*}WB^{-\frac{1}{2}}\right\|\\
    &\leq\left\|B^{-1}\right\|\cdot\left\|\left(1-i\tilde{V}_{k}^{*}H\tilde{V}_{k}\right)^{-1}\right\|\cdot\left\|\tilde{V}_{k}\right\|\cdot\left\|W\right\|\\
    &\leq\left\|B^{-1}\right\|.
\end{align*}
If $C>0$, we repeat the above with $-iA$ in place of $A$.
\end{proof}
When $k=1$, $U=\mbf{u}$ is a vector on the unit sphere, and the lemma reduces to
\begin{align}
    \Big|\frac{\mbf{u}^{*}A^{-1}BA^{-1}\mbf{u}}{\mbf{u}^{*}A^{-1}\mbf{u}}\Big|&\leq\left\|\left(\Re A\right)^{-1}\right\|.
\end{align}
{We will use often this lemma to bound ratios of determinants as in the following.
\begin{lemma}\label{lem:detRatio}
Let $V\in\mbb{C}^{k\times n}$ and $Y,W\in\mbf{M}_{n}$ with $Y,W$ invertible and $\Im Y>0$. Assume that
\begin{align}
    \rho&:=\left\|(\Im Y)^{-1}\right\|\cdot\left\|Y(Y^{-1}-W^{-1})Y\right\|<1.
\end{align}
Then we have
\begin{align}
    \left|\frac{\det V^{*}W^{-1}V}{\det V^{*}Y^{-1}V}-1\right|&\leq\frac{k\rho}{1-\rho}e^{\frac{k\rho}{1-\rho}}.\label{eq:detRatio}
\end{align}
\end{lemma}
\begin{proof}
Using the identity $\det(1+AB)=\det(1+BA)$ we rewrite the ratio of determinants as
\begin{align*}
    \frac{\det V^{*}W^{-1}V}{\det V^{*}Y^{-1}V}&=\det\left[1_{k}-(V^{*}Y^{-1}V)^{-1}V^{*}(Y^{-1}-W^{-1})V\right]\\
    &=\det\left(1_{n}-P\right),
\end{align*}
where
\begin{align*}
    P&:=Y^{-1}V(V^{*}Y^{-1}V)^{-1}V^{*}Y^{-1}\cdot Y(Y^{-1}-W^{-1})Y.
\end{align*}
Using \eqref{eq:minornorm} we have
\begin{align*}
    \left\|Y^{-1}V(V^{*}Y^{-1}V)^{-1}V^{*}Y^{-1}\right\|&\leq\|(\Im Y)^{-1}\|,
\end{align*}
which gives $\|P\|\leq\rho<1$. Thus $1-uP$ is invertible for $u\in[0,1]$ and from the integral representation of the logarithm we obtain
\begin{align*}
    \frac{\det V^{*}W^{-1}V}{\det V^{*}Y^{-1}V}&=\exp\left\{-\int_{0}^{1}\tr((1-uP)^{-1}P)du\right\}.
\end{align*}
Since $\text{rank}(P)=k$, we have
\begin{align*}
    \left|\int_{0}^{1}\tr((1-uP)^{-1}P)du\right|&\leq\frac{k\rho}{1-\rho}.
\end{align*}
Thus using $|e^{x}-1|\leq xe^{x}$ we obtain \eqref{eq:detRatio}.
\end{proof}
}

The next lemma is known as Fischer's inequality \cite{fischer_uber_1908}.
\begin{lemma}\label{lem:fischerineq}
Let $A$ be a positive semi-definite block matrix with diagonal blocks $A_{nn}$. Then
\begin{align}\label{eq:fischerineq}
    \det A&\leq\prod_{n}\det A_{nn}.
\end{align}
\end{lemma}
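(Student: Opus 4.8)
The plan is to peel off one diagonal block at a time, reducing to the case of a $2\times2$ block decomposition, and then to combine the Schur complement factorisation with the monotonicity of the determinant on the positive semi-definite cone.

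Concretely, write $A=\begin{pmatrix}A_{11}&B\\B^{*}&C\end{pmatrix}$, where $A_{11}$ is the first diagonal block and $C$ is the square principal submatrix obtained by deleting the first block row and column; note $C\geq0$, being a principal submatrix of $A\geq0$. It suffices to show $\det A\leq\det A_{11}\cdot\det C$: an induction on the number of blocks applied to $C$ then gives $\det C\leq\prod_{n\geq2}\det A_{nn}$, and the claim follows. To prove the two-block inequality, first suppose $A_{11}>0$ and set $S=C-B^{*}A_{11}^{-1}B$. The factorisation
\begin{align*}
    A&=\begin{pmatrix}1&0\\B^{*}A_{11}^{-1}&1\end{pmatrix}\begin{pmatrix}A_{11}&0\\0&S\end{pmatrix}\begin{pmatrix}1&A_{11}^{-1}B\\0&1\end{pmatrix}=:T^{*}DT
\end{align*}
has unitriangular outer factors, so $\det A=\det D=\det A_{11}\cdot\det S$. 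Since $D=(T^{-1})^{*}AT^{-1}\geq0$ we have $S\geq0$, and since $B^{*}A_{11}^{-1}B\geq0$ we have $S\leq C$; hence $0\leq S\leq C$, and $\det S\leq\det C$ by the monotonicity of the determinant on positive semi-definite matrices. This proves $\det A\leq\det A_{11}\cdot\det C$ when $A_{11}>0$.

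The only point requiring care is the degenerate case in which some diagonal block is singular, and I do not expect it to be a real obstacle. If $A_{11}$ is singular, pick $v\neq0$ with $A_{11}v=0$; then $(v,0)^{*}A(v,0)=v^{*}A_{11}v=0$, so $(v,0)$ lies in the kernel of $A$ (using $x^{*}Ax=0\Rightarrow Ax=0$ for $A\geq0$), whence $\det A=0=\det A_{11}\cdot\det C$ and the inequality is trivial; the same argument disposes of any singular $A_{nn}$. Alternatively one can apply the nondegenerate case to $A+\delta1_{N}>0$ and let $\delta\downarrow0$. As an aside, when all $A_{nn}$ are invertible there is a shorter route avoiding the Schur complement entirely: conjugating $A$ by the block-diagonal matrix $\mathrm{diag}(A_{nn})^{-1/2}$ reduces to the case $A_{nn}=1$ for every $n$, in which $\tr A=\sum_{n}\tr A_{nn}=N$ and the AM--GM inequality gives $\det A=\prod_{i}\lambda_{i}(A)\leq(N^{-1}\sum_{i}\lambda_{i}(A))^{N}=(N^{-1}\tr A)^{N}=1=\prod_{n}\det A_{nn}$.
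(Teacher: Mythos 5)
The paper does not supply a proof of this lemma; it is simply stated and cited as Fischer's inequality (reference \cite{fischer_uber_1908}), so there is no internal argument to compare against. Your proof is correct and is the standard one: reduce to the two-block case, use the Schur complement factorisation $\det A=\det A_{11}\det S$ with $S=C-B^{*}A_{11}^{-1}B$, observe $0\leq S\leq C$, and invoke monotonicity of the determinant on the positive semi-definite cone, with the singular case handled separately via $x^{*}Ax=0\Rightarrow Ax=0$. The induction correctly peels off one block at a time since the remaining principal submatrix $C$ carries exactly the remaining diagonal blocks. Your side remark giving the AM--GM route after normalising the diagonal blocks to the identity is also valid and is arguably the cleanest way to see the result when all $A_{nn}$ are invertible, so either approach would serve here.
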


The next lemma deals with integrals over the Stiefel manifold $\mbf{U}_{N,k}:=\mbf{U}_{N}/\mbf{U}_{N-k}$ {with respect to the Haar measure $d\mu_{N,k}$ with total mass equal to $\text{Vol}(\mbf{U}_{N,k})$}. We state it for general $k$, although we will only use it for $k=1$, i.e. for integrals on the sphere.
\begin{lemma}\label{lem:sphericalint}
Let $f\in L^{1}\left(\mbb{C}^{N\times k}\right)$ be continuous on a neighbourhood of $\mbf{U}_{N,k}$ and define $\hat{f}: \mbf{M}^{sa}_{k}\to\mbb{C}$ by
\begin{align}
    \hat{f}(H)&:=\frac{1}{\pi^{k^{2}}}\int_{\mbb{C}^{N\times k}}e^{-i\tr HX^{*}X}f\left(X\right)dX.
\end{align}
Assume that $\hat{f}\in L^{1}\left( M^{sa}_{k}\right)$; then
\begin{align}\label{eq:sphericalint}
    \int_{\mbf{U}_{N,k}}f\left(U\right)d\mu_{N,k}\left(U\right)&=\int_{ M^{sa}_{k}}e^{i\tr H}\hat{f}(H)dH.
\end{align}
\end{lemma}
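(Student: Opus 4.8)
The plan is to read the right-hand side of \eqref{eq:sphericalint} as a Fourier inversion that reconstructs, at the identity $1_{k}$, the density of the push-forward of the finite complex measure $f\left(X\right)dX$ under the quadratic map $X\mapsto X^{*}X$. The classical polar decomposition of rectangular matrices says that, for $N\geq k$, almost every $X\in\mbb{C}^{N\times k}$ factors uniquely as $X=U\left(X^{*}X\right)^{1/2}$ with $U\in\mbf{U}_{N,k}$, and that under the substitution $W=X^{*}X$ Lebesgue measure disintegrates as $dX=c_{N,k}\,\det\left(W\right)^{N-k}d\mu_{N,k}\left(U\right)\,dW$ for an explicit constant $c_{N,k}$ — this is the Jacobian underlying the complex Wishart density, and the constant, together with the normalisations of $dH$ on $\mbf{M}^{sa}_{k}$ and of $\mu_{N,k}$, is what makes the coefficient in \eqref{eq:sphericalint} come out equal to $1$ (for $k=1$ this is just integration in polar coordinates on $\mbb{C}^{N}$, with $\mu_{N,1}$ the surface measure on $S^{2N-1}$ of total mass $2\pi^{N}/\left(N-1\right)!$). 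Consequently, for every bounded measurable $\psi$ on $\mbf{M}^{sa}_{k}$ one has
\begin{align*}
    \int_{\mbb{C}^{N\times k}}\psi\left(X^{*}X\right)f\left(X\right)dX&=\int_{\mbf{M}^{sa}_{k}}\psi\left(W\right)\,p\left(W\right)\,dW,\\
    p\left(W\right)&:=c_{N,k}\,\det\left(W\right)^{N-k}\,\mbf{1}_{\{W>0\}}\int_{\mbf{U}_{N,k}}f\left(UW^{1/2}\right)d\mu_{N,k}\left(U\right),
\end{align*}
so $p\in L^{1}\left(\mbf{M}^{sa}_{k}\right)$ is the density of this push-forward.

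Taking $\psi\left(W\right)=e^{-i\tr HW}$ in the displayed identity and comparing with the definition of $\hat{f}$ shows that $\pi^{k^{2}}\hat{f}$ is exactly the Fourier transform of $p$ on $\mbf{M}^{sa}_{k}\cong\mbb{R}^{k^{2}}$ with respect to the pairing $\left(H,W\right)\mapsto\tr HW$. Since $\hat{f}\in L^{1}$ by hypothesis, the function $Y\mapsto\int_{\mbf{M}^{sa}_{k}}e^{i\tr HY}\hat{f}\left(H\right)dH$ is well defined and continuous, and I would identify it with a multiple of $p$ by Gaussian regularisation: inserting a factor $e^{-\varepsilon\tr H^{2}}$ makes the $H$- and $X$-integrals jointly absolutely convergent, so Fubini applies and
\begin{align*}
    \int_{\mbf{M}^{sa}_{k}}e^{i\tr HY}\hat{f}\left(H\right)e^{-\varepsilon\tr H^{2}}dH&=\frac{1}{\pi^{k^{2}}}\int_{\mbb{C}^{N\times k}}f\left(X\right)G_{\varepsilon}\left(Y-X^{*}X\right)dX=\frac{1}{\pi^{k^{2}}}\int_{\mbf{M}^{sa}_{k}}p\left(W\right)G_{\varepsilon}\left(Y-W\right)dW,
\end{align*}
where $G_{\varepsilon}\left(Z\right)=\int_{\mbf{M}^{sa}_{k}}e^{i\tr HZ}e^{-\varepsilon\tr H^{2}}dH$ is, up to a fixed normalisation constant, a Gaussian approximate identity on $\mbf{M}^{sa}_{k}$. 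Letting $\varepsilon\to0$, the left side converges by dominated convergence (dominated by $|\hat{f}|\in L^{1}$), while the right side converges to $p\left(Y\right)$ times an explicit constant at every point $Y$ at which $p$ is continuous. Evaluating at $Y=1_{k}$, where $p\left(1_{k}\right)=c_{N,k}\int_{\mbf{U}_{N,k}}f\left(U\right)d\mu_{N,k}\left(U\right)$, and absorbing all the constants yields \eqref{eq:sphericalint}.

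The step that requires real care — and the only one that uses the hypotheses on $f$ beyond integrability — is the continuity of $p$ at $1_{k}$. As $W\to1_{k}$ in $\mbf{M}^{sa}_{k}$ we have $W^{1/2}\to1_{k}$, so $UW^{1/2}$ stays in a fixed compact neighbourhood of the compact set $\mbf{U}_{N,k}$, on which $f$ is continuous and hence bounded; dominated convergence then gives $\int_{\mbf{U}_{N,k}}f\left(UW^{1/2}\right)d\mu_{N,k}\left(U\right)\to\int_{\mbf{U}_{N,k}}f\left(U\right)d\mu_{N,k}\left(U\right)$, and since $\det\left(W\right)^{N-k}\to1$ this gives continuity of $p$ at $1_{k}$ — precisely the input needed for the Fourier inversion to be valid pointwise there. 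The remaining work is bookkeeping: pinning down the polar-decomposition Jacobian $\det\left(X^{*}X\right)^{N-k}$ and the value of $c_{N,k}$ and the Fourier-inversion constant on $\mbf{M}^{sa}_{k}$, so that the coefficient in \eqref{eq:sphericalint} is exactly $1$; the cleanest route is to carry this out in the case $k=1$ that is actually used, where the polar decomposition is the familiar one on $\mbb{C}^{N}$ and the constants are those of $S^{2N-1}$. The main obstacle, then, is not a hard estimate but the careful orchestration of a matrix change of variables, a regularised Fourier inversion valid pointwise at $1_{k}$, and the constant tracking that ties them together.
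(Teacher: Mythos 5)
Your proposal is correct and takes essentially the same route as the paper: both rest on the polar decomposition $X=UW^{1/2}$ with Jacobian proportional to $\det^{N-k}(W)$ and a Gaussian-regularised Fubini/Fourier-inversion step (the paper's Hubbard--Stratonovich transform applied to a quantity $I_{\epsilon}$), followed by $\epsilon\to 0$ using continuity of the push-forward density at $1_{k}$. The paper also carries out, for general $k$, the constant bookkeeping you defer, working directly with $I_{\epsilon}=\frac{1}{2^{k/2}(\pi\epsilon)^{k^{2}/2}}\int e^{-\frac{1}{2\epsilon}\tr(X^{*}X-1_{k})^{2}}f(X)\,dX$ and evaluating it in two ways.
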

\begin{proof}
Let
\begin{align}
    I_{\epsilon}&=\frac{1}{2^{\frac{k}{2}}\left(\pi\epsilon\right)^{\frac{k^{2}}{2}}}\int_{\mbb{C}^{N\times k}}e^{-\frac{1}{2\epsilon}\tr\left(X^{*}X-1_{k}\right)^{2}}f\left(X\right)dX.
\end{align}
Any $X\in\mbb{C}^{N\times k}$ has a polar decomposition $X=UP^{\frac{1}{2}}$, where $U\in\mbf{U}_{N,k}$ and $P\geq0$. The Jacobian of this transformation is (see e.g. \cite{diaz-garcia_wishart_2011}, Proposition 4)
\begin{align*}
    dX&=2^{-k}\det^{N-k}\left(P\right)dPd\mu_{N,k}\left(U\right).
\end{align*}
Thus we have
\begin{align*}
    I_{\epsilon}&=\frac{1}{2^{\frac{k}{2}}\left(\pi\epsilon\right)^{\frac{k^{2}}{2}}}\int_{P\geq0}e^{-\frac{1}{2\epsilon}\tr\left(P-1_{k}\right)^{2}}g\left(P\right)dP,
\end{align*}
where
\begin{align*}
    g\left(P\right)&=\frac{1}{2^{k}}\det^{N-k}\left(P\right)\int_{\mbf{U}_{N,k}}f\left(UP^{\frac{1}{2}}\right)d\mu_{N,k}\left(U\right).
\end{align*}
Since $g\in L^{1}$ and is continuous at $P=1$ we can take the limit $\epsilon\to0$ to obtain
\begin{align}
    \lim_{\epsilon\to0}I_{\epsilon}&=g\left(1_{k}\right)=\frac{1}{2^{k}}\int_{\mbf{U}_{N,k}}f\left(U\right)d\mu_{N,k}\left(U\right).\label{eq:Ieps1}
\end{align}
On the other hand, by linearising the quadratic exponent through the introduction of an auxilliary variable (i.e. a Hubbard-Stratonovich transformation), we have
\begin{align*}
    I_{\epsilon}&=\frac{1}{2^{k}\pi^{k^{2}}}\int_{\mbb{C}^{N\times k}}\int_{ M^{sa}_{k}}e^{-\frac{\epsilon}{2}\tr H^{2}+i\tr H\left(1_{k}-X^{*}X\right)}f(X)dHdX\\
    &=\frac{1}{2^{k}}\int_{ M^{sa}_{k}}e^{-\frac{\epsilon}{2}\tr H^{2}+i\tr H}\hat{f}(H)dH.
\end{align*}
In the second line we have interchanged the $H$ and $X$ integrals, which is justified by the assumption $f\in L^{1}$ and the factor $e^{-\frac{\epsilon}{2}\tr H^{2}}$. Since moreover $\hat{f}\in L^{1}$, we can take the limit $\epsilon\to0$ inside the integral:
\begin{align}
    \lim_{\epsilon\to0}I_{\epsilon}&=\frac{1}{2^{k}}\int_{ M^{sa}_{k}}e^{i\tr H}\hat{f}(H)dH.\label{eq:Ieps2}
\end{align}
Comparing \eqref{eq:Ieps1} and \eqref{eq:Ieps2} we obtain \eqref{eq:sphericalint}.
\end{proof}
The requirement that $f\in L^{1}\left(\mbb{C}^{N\times k}\right)$ can be relaxed by noting that on the Stiefel manifold we have $U^{*}U=1_{k}$ and hence
\begin{align*}
    f\left(U\right)&=e^{ka-a\tr U^{*}U}f\left(U\right)=:e^{ka}f_{a}\left(U\right),
\end{align*}
for any $a>0$. If $|f(X)|\leq e^{c\tr X^{*}X}$ for some $c>0$, then $f_{a}\in L^{1}\left(\mbb{C}^{N\times k}\right)$ for $a>c$. If in addition $\hat{f}_{a}\in L^{1}\left( M^{sa}_{k}\right)$, we can apply the lemma to obtain
\begin{align}
    \int_{\mbf{U}_{N,k}}f\left(U\right)d\mu_{N,k}\left(U\right)&=e^{ka}\int_{ M^{sa}_{k}}e^{i\tr H}\hat{f}_{a}\left(H\right)dH.
\end{align}
When $k=1$, this lemma gives us a formula for integrals over the {complex unit sphere $S^{N-1}$}:
\begin{align}
    \int_{S^{N-1}}f\left(\mbf{u}\right)d\mu_{N,1}\left(\mbf{u}\right)&=\int_{-\infty}^{\infty}e^{ix}\hat{f}(x)dx.
\end{align}
Since $\hat{f}$ involves an integral over the whole vector space $\mbb{C}^{N}$, it is typically easier to evaluate than the original integral over $S^{N-1}$. We can then reduce an integral over $S^{N-1}$ to an integral over a single real variable, which is useful when taking the large $N$ limit.

We end this section by listing the relevant properties of matrices $A$ satisfying \eqref{eq:A_g}, \eqref{eq:A_alpha}, \eqref{eq:A_beta}, \eqref{eq:A_gamma} and \eqref{eq:A2}.
\begin{lemma}
Let $A$ satisfy \eqref{eq:A_g}, \eqref{eq:A_alpha}, \eqref{eq:A_beta}, \eqref{eq:A_gamma} and \eqref{eq:A2}, and let $z\in\mbb{C}$ such that $|z|<1$. For $\eta_{m}\geq\eta$ and $B_{m}\in\{E,E^{*}\}$ we have
\begin{align}
    \left|\left\langle\prod_{m=1}^{n}G_{z}\left(\eta_{m}\right)B_{m}\right\rangle\right|&\leq\begin{cases}C&\quad n=2\\C\eta^{-3/2}&\quad n=3\\ C\eta^{{2-n}}&\quad n\geq4\end{cases}.\label{eq:trace2}
\end{align}
\end{lemma}
\begin{proof}
Let $G_{i}=G_{z}\left(\eta_{i}\right)$. The case $n=2$ of \eqref{eq:trace2} follows directly from the assumption \eqref{eq:A2}. For the remaining cases we use Cauchy-Schwarz first and then \eqref{eq:A2}. When $n=3$ we have
\begin{align*}
    \left|\left\langle G_{1}B_{1}G_{2}B_{2}G_{3}B_{3}\right\rangle\right|&\leq\left\langle\frac{\Im G_{1}}{\eta_{1}}B_{1}\frac{\Im G_{2}}{\eta_{2}}B_{1}^{*}\right\rangle^{1/2}\left\langle B_{2}^{*}B_{2}G_{3}B_{3}B_{3}^{*}G^{*}_{3}\right\rangle^{1/2}\\
    &\leq\frac{\|B_{2}\|\cdot\|B_{3}\|}{\eta^{3/2}}\left\langle\Im\left(G_{1}\right)B_{1}\Im\left(G_{2}\right)B_{2}\right\rangle^{1/2}\left\langle \Im G_{2}\right\rangle^{1/2}\\
    &\leq\frac{C}{\eta^{3/2}}.
\end{align*}
For $n\geq4$,
\begin{align*}
    \left|\left\langle\prod_{m=1}^{n}G_{m}B_{m}\right\rangle\right|&\leq\left\langle\frac{Im G_{1}}{\eta_{1}}B_{1}\frac{\Im G_{2}}{\eta_{2}}B_{1}^{*}\right\rangle^{1/2}\\&\left\langle B_{2}\left(\prod_{m=3}^{n-2}G_{m}B_{m}\right)G_{n-1}B_{n-1}G_{n}B_{n}B_{n}^{*}G^{*}_{n}B_{n-1}^{*}G^{*}_{n-1}\left(B_{2}\prod_{m=3}^{n-2}G_{m}B_{m}\right)^{*}\right\rangle^{1/2}\\
    &\leq\frac{\|B_{n}\|\prod_{m=2}^{n-2}\|B_{m}\|}{\eta^{n-2}}\left\langle\Im\left(G_{1}\right)B_{1}\Im\left(G_{2}\right)B_{1}^{*}\right\rangle^{1/2}\left\langle\Im\left(G_{n-1}\right)B_{n-1}\Im\left(G_{n}\right)B_{n-1}^{*}\right\rangle^{1/2}\\
    &\leq\frac{C}{\eta^{n-2}}.
\end{align*}
In the second inequality we used $\tr XYX^{*}\leq \|X\|^{2}\tr Y$ for $Y>0$ to take $B_{2}\prod_{m=3}^{n-2}G_{m}B_{m}$ and $B_{n}B_{n}^{*}$ outside the trace.
\end{proof}

\section{Partial Schur Decomposition}
It is well known that a matrix of dimension $N$ can be brought into upper triangular form by a successively applying $N$ Householder transformations. If we stop after $k<N$ transformations then the resulting matrix will be partially upper triangular; this is the partial Schur decomposition. 
For $i=1,...,k$, define the manifolds
\begin{align}
    \Omega_{i}&:=\mbb{C}\times S^{N-i}_{+}\times\mbb{C}^{N-i},\\
    \Omega&:=\Omega_{1}\times\cdots\Omega_{k}\times \mbf{M}_{N-k}\left(\mbb{C}\right),
\end{align}
where $S^{n}_{+}=\{\mbf{v}\in S^{n}:v_{1}\geq0\}$. The volume elements are
\begin{align}
    d\omega_{i}\left(z_{i},\mbf{v}_{i},\mbf{w}_{i}\right)&:=dz_{i}dS_{N-i}\left(\mbf{v}_{i}\right)d\mbf{w}_{i},\\
    d\omega(z_{1},\mbf{v}_{1},\mbf{w}_{1},...,z_{k},\mbf{v}_{k},\mbf{w}_{k},M)&:=\left(\prod_{j=1}^{k}d\omega_{j}\left(z_{j},\mbf{v}_{j},\mbf{w}_{j}\right)\right)dM,
\end{align}
respectively, where $dS_{n}$ is the Haar measure on $S^{n}$ (with total measure equal to $\text{Vol}(S^{n}$)). Let $\pi:\Omega\to\mbb{C}^{k}$ denote the projection onto the $z_{i}$, i.e.
\begin{align}
    \pi\left(z_{1},\mbf{v}_{1},\mbf{w}_{1},...,z_{k},\mbf{v}_{k},\mbf{w}_{k},M\right)&=(z_{1},...,z_{k})=\mbf{z}.\label{eq:pi}
\end{align}
For $i=1,...,k$, define the maps
\begin{align}
    T_{i}&:\Omega_{i}\times  \mbf{M}_{N-i}\to  \mbf{M}_{N-i+1}\\
    T_{i}\left(z_{i},\mbf{v}_{i},\mbf{w}_{i},{M^{(i)}}\right)&=M^{(i-1)}=R\left(\mbf{v}_{i}\right)\begin{pmatrix}z_{i}&\mbf{w}_{i}^{*}\\0&{M^{(i)}}\end{pmatrix}R\left(\mbf{v}_{i}\right),
\end{align}
where $R\left(\mbf{v}_{i}\right)$ is the Householder transformation that exchanges $\mbf{v}_{i}$ with the first coordinate vector of {$\mbb{C}^{N-i+1}$} {(explicitly, $R(\mbf{v}_{i})=1_{N-i+1}-2\mbf{q}_{i}\mbf{q}_{i}^{*}$, where $\mbf{q}_{i}=\frac{\mbf{v}_{i}+\mbf{e}_{i}}{\|\mbf{v}_{i}+\mbf{e}_{i}\|}$ and $\mbf{e}_{i}$ is the first coordinate vector in $\mbb{C}^{N-i+1}$)}. Each $T_{i}$ corresponds to one step of the Schur decomposition. Denoting by $\text{Id}_{j}$ the identity on $\Omega_{1}\times\cdots\times\Omega_{j}$, the partial Schur decomposition is the map
\begin{align}
    T&:\Omega_{1}\times\cdots\Omega_{k}\times  \mbf{M}_{N-k}\to \mbf{M}_{N},\\
    T&=T_{1}\circ\left(\text{Id}_{1}\oplus T_{2}\right)\cdots\circ \left(\text{Id}_{k-1}\oplus T_{k}\right),
\end{align}
which takes the form
\begin{align}
    M&=T(z_{1},\mbf{v}_{1},\mbf{w}_{1},...,z_{k},\mbf{v}_{k},\mbf{w}_{k},M^{\left(k\right)})\nonumber\\
    &=U\begin{pmatrix}
    z_{1}&\mbf{w}_{1}^{*}V_{1}^{*}&&\\
    0&z_{2}&\mbf{w}_{2}^{*}V_{2}^{*}&\\
    \vdots&0&\ddots&\ddots\\
    0&\cdots&\ddots&z_{k}&\mbf{w}_{k}^{*}\\
    0&\cdots&0&0&M^{\left(k\right)}
    \end{pmatrix}U^{*},\label{eq:pSchur}
\end{align}
where
\begin{align}
    U&=\prod_{j=1}^{k}\begin{pmatrix}1_{j-1}&0\\0&R\left(\mbf{v}_{j}\right)\end{pmatrix}{\in\mbf{U}_{N}},\label{eq:U}\\
    V_{j}&=\prod_{l=0}^{k-j-1}\begin{pmatrix}1_{k-j-l-1}&0\\0&R\left(\mbf{v}_{k-l}\right)\end{pmatrix}{\in\mbf{U}_{N-j}}.
\end{align}

To understand its properties we need only understand those of the $T_{i}$, which we summarise in the following lemma:
\begin{lemma}
$T_{i}$ is surjective, and its Jacobian is given by
\begin{align}
    J_{i}\left(z_{i},\mbf{v}_{i},\mbf{w}_{i},{M^{(i)}}\right)&=|\textup{det}({M^{(i)}}-z_{i})|^{2}.
\end{align}
If {$M^{(i-1)}$} has distinct eigenvalues, then
{\begin{align}\label{eq:inverseimage_Ti}
    T_{i}^{-1}\left(M^{(i-1)}\right)&=\bigcup_{n=1}^{N-i+1}\left\{(z_{i,n},\mbf{v}_{i,n},\mbf{w}_{i,n},M^{\left(i\right)}_{n})\right\},
\end{align}}
for some $\mbf{v}_{i,n},\mbf{w}_{i,n},M^{\left(i\right)}_{n}$, where $\{z_{i,n}\}_{n=1}^{N-i+1}$ are the eigenvalues of $M^{(i-1)}$.
\end{lemma}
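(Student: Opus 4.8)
\emph{Surjectivity and the fibre.} I would settle these two (elementary) parts first. Given $M_{i-1}\in\mbf{M}_{N-i+1}$, pick an eigenvalue $z$ with a unit eigenvector and, after a phase, take its first component to be real and nonnegative, so that it is a point $\mbf{v}\in S^{N-i}_{+}$. Since $R(\mbf{v})$ carries $\mbf{v}$ to the first coordinate vector, the first column of $R(\mbf{v})M_{i-1}R(\mbf{v})$ is $R(\mbf{v})M_{i-1}\mbf{v}=zR(\mbf{v})\mbf{v}=ze_{1}$, so $R(\mbf{v})M_{i-1}R(\mbf{v})$ is block upper triangular with $(1,1)$-entry $z$; calling its off-diagonal and lower-right blocks $\mbf{w}\in\mbb{C}^{N-i}$ and $M'\in\mbf{M}_{N-i}$ and using $R(\mbf{v})^{2}=1$ gives $T_{i}(z,\mbf{v},\mbf{w},M')=M_{i-1}$, so $T_{i}$ is surjective. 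Conversely, conjugating any relation $T_{i}(z_{i},\mbf{v}_{i},\mbf{w}_{i},M_{i})=M_{i-1}$ by $R(\mbf{v}_{i})$ forces $\mbf{v}_{i}=R(\mbf{v}_{i})e_{1}$ to be a unit eigenvector of $M_{i-1}$ for the eigenvalue $z_{i}$, after which $\mbf{w}_{i}$ and $M_{i}$ are just the off-diagonal and lower-right blocks of $R(\mbf{v}_{i})M_{i-1}R(\mbf{v}_{i})$. When the eigenvalues of $M_{i-1}$ are distinct, each eigenspace is one-dimensional and, away from the lower-dimensional set on which some eigenvector has vanishing first component, contains a unique admissible unit eigenvector; so the preimage has one point for each eigenvalue of $M_{i-1}$, which is \eqref{eq:inverseimage_Ti}.

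\emph{The Jacobian.} Write $R=R(\mbf{v}_{i})$ and $B=\begin{pmatrix}z_{i}&\mbf{w}_{i}^{*}\\0&M_{i}\end{pmatrix}$, so that $M_{i-1}=RBR$. From $R^{2}=1$ we get $dR\cdot R+R\cdot dR=0$, hence $\Theta:=R\,dR=-dR\cdot R$ is anti-Hermitian and depends linearly on $d\mbf{v}_{i}$; differentiating this relation and conjugating by the Hilbert--Schmidt isometry $X\mapsto RXR$ (which does not change $J_{i}$) gives
\begin{align*}
R\,(dM_{i-1})\,R&=dB+[\Theta,B].
\end{align*}
Here $dB$ ranges over exactly the matrices whose first column below the diagonal vanishes, its free parameters being $dz_{i},d\mbf{w}_{i},dM_{i}$, whereas $[\Theta,B]$ depends on $d\mbf{v}_{i}$ alone. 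Hence, in the corresponding splittings of domain and codomain, the linear map $(dz_{i},d\mbf{w}_{i},dM_{i},d\mbf{v}_{i})\mapsto dB+[\Theta,B]$ is block upper triangular with identity top-left block, and $J_{i}$ equals the modulus of the determinant of the map sending $d\mbf{v}_{i}$ to the part of $[\Theta,B]$ in the first column below the diagonal. Since the $(2,1)$-block of $B$ is zero and $\Theta e_{1}=R\,d\mbf{v}_{i}$ (differentiate $Re_{1}=\mbf{v}_{i}$), that part is $(z_{i}-M_{i})\,(R\,d\mbf{v}_{i})'$, where $(\,\cdot\,)'$ denotes the last $N-i$ coordinates. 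Now $\det_{\mbb R}$ of multiplication by a complex matrix $C$ on $\mbb{C}^{N-i}$ equals $|\det C|^{2}$, and $d\mbf{v}_{i}\mapsto(R\,d\mbf{v}_{i})'$ is a bijection $T_{\mbf{v}_{i}}S^{N-i}_{+}\to\mbb{C}^{N-i}$ which is volume-preserving for $dS_{N-i}$ (see below); this gives $J_{i}=|\det(z_{i}-M_{i})|^{2}$. (As a consistency check, iterating accumulates $\prod_{i}|\det(z_{i}-M_{i})|^{2}$, which collapses to the Vandermonde factor $\prod_{j<l}|z_{j}-z_{l}|^{2}$ of the Ginibre eigenvalue density.)

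\emph{The main obstacle.} I expect the work to be in justifying that $d\mbf{v}_{i}\mapsto(R\,d\mbf{v}_{i})'$ is volume-preserving. For a fixed base point $R$ is unitary, hence a linear isometry, but $R$ varies with $\mbf{v}_{i}$; moreover the slice $S^{N-i}_{+}=\{\mbf{v}:|\mbf{v}|=1,\ v_{1}\in\mbb{R}_{\geq 0}\}$ of the phase action on the unit sphere of $\mbb{C}^{N-i+1}$ is not totally geodesic, so the measure that makes the above change of variables correct is not the induced Riemannian measure on $S^{N-i}_{+}$ but differs from it by a factor of $v_{i,1}$. One must therefore read $dS_{N-i}$ as the measure on the slice obtained by quotienting the sphere's rotation-invariant measure by the $U(1)$-phase — equivalently, Lebesgue measure in the coordinates $(v_{i,2},\dots,v_{i,N-i+1})$ — and check that with this convention the $v_{i,1}$-factor coming from the non-isometric projection is cancelled exactly by the one relating $dS_{N-i}$ to the Riemannian measure. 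Getting this bookkeeping right is the crux; the rest is routine block-matrix linear algebra.
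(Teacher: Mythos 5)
On surjectivity and the fibre structure your argument matches the paper's, which also builds the preimage from an eigenvector and its Householder reflection; the paper differs only in writing the components $\mbf{v}_i^{(n)},\mbf{w}_i^{(n)},M_i^{(n)}$ explicitly, whereas you argue uniqueness abstractly, with the same distinct-eigenvalues hypothesis doing the same work. Where you genuinely diverge is the Jacobian: the paper does not derive it at all, citing eq.\ (6.5) of \cite{fyodorov_absolute_2007}. Your attempted derivation --- conjugating $dM_{i-1}$ by the Hilbert--Schmidt isometry $X\mapsto RXR$, writing $R\,dM_{i-1}\,R=dB+[\Theta,B]$ with $\Theta=R\,dR$ anti-Hermitian, isolating the $(2,1)$-block as $(z_i-M_i)(R\,d\mbf{v}_i)'$ via $\Theta e_1=R\,d\mbf{v}_i$, and using block-triangularity to factor the Jacobian --- is structurally sound, and is a reasonable way to reprove the cited formula from scratch. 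So you are attempting more than the paper does here.

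But the step you flag at the end is a genuine gap in the proposal as written: the volume-preservation of $d\mbf{v}_i\mapsto(R\,d\mbf{v}_i)'$ is asserted, not proved, and without it the Jacobian is not established. Your diagnosis is nonetheless correct --- $dS_{N-i}$ on $S^{N-i}_+$ must be read as the $U(1)$-quotient of the sphere measure (as the paper's lifting in \Cref{cor2}, $\int_{S^{N-j}_+}\to\frac{1}{2\pi}\int_{S^{N-j}}$, already presupposes), and in the coordinates $(v_{i,2},\dots,v_{i,N-i+1})$ this is exactly Lebesgue measure, so the slice's non-geodesic factor $v_{i,1}$ is built into the convention --- and the check does close: in those coordinates, and after rotating so that one component of $\mbf{v}_i$ is real, the real-linear map $d\mbf{v}_i\mapsto(R\,d\mbf{v}_i)'$ has determinant $1$ (for $N-i=1$ it sends $(a,b)\mapsto(-a/v_{i,1},-v_{i,1}b)$, determinant $1$; the general case factors similarly). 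Since you have explicitly deferred this computation rather than carried it out, the Jacobian part of the lemma is not proved by your proposal, though your plan for proving it is viable and, unlike the paper's citation, would make the lemma self-contained.
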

\begin{proof}
The surjectivity follows from the fact that any matrix has at least one eigenvector. The Jacobian has been calculated in \cite{fyodorov_absolute_2007}, eq. (6.5). If $M^{(i-1)}$ has distinct eigenvalues, then $\mbf{v}_{i,n}$ is uniquely determined by $z_{i,n}$. $\mbf{w}_{i,n}$ and $M^{\left(i\right)}_{n}$ are given by
\begin{align*}
    \begin{pmatrix}0\\\mbf{w}_{i,n}\end{pmatrix}&=R(\mbf{v}_{i,n})(1-\mbf{v}_{i,n}\mbf{v}^{*}_{i,n})M^{(i-1)*}\mbf{v}_{i,n}R(\mbf{v}_{i,n}),\\
    \begin{pmatrix}0&0\\0&M^{\left(i\right)}_{n}\end{pmatrix}&=R(\mbf{v}_{i,n})(1-\mbf{v}_{i,n}\mbf{v}^{*}_{i,n})M^{(i-1)}(1-\mbf{v}_{i,n}\mbf{v}^{*}_{i,n})R(\mbf{v}_{i,n}),
\end{align*}
and are therefore uniquely determined by $\mbf{v}_{i,n}$.
\end{proof}

\begin{cor}\label{cor1}
The Jacobian of $T$ is
\begin{align}
    J&=|\Delta\left(\mbf{z}\right)|^{2}\prod_{i=1}^{k}\left|\textup{det}(M^{\left(k\right)}-z_{i})\right|^{2},
\end{align}
{where $\Delta(\mbf{z})=\prod_{i<j}(z_{i}-z_{j})$ is the Vandermonde determinant,} and for matrices $M\in\mbf{M}_{N}$ with distinct eigenvalues $z_{i},\,i=1,...,N$, we have
{\begin{align}\label{eq:inverseimage_T}
    \pi(T^{-1}\left(M\right))&=\bigcup_{i_{1}\neq\cdots\neq i_{k}}\left\{\left(z_{i_{1}},...,z_{i_{k}}\right)\right\}.
\end{align}}
\end{cor}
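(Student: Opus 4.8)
The plan is to obtain both statements by iterating the preceding lemma through the composition $T = T_1 \circ (\mathrm{Id}_1 \oplus T_2) \circ \cdots \circ (\mathrm{Id}_{k-1} \oplus T_k)$. For the Jacobian, I would use the multiplicativity of Jacobians under composition. At the $i$-th step we apply $T_i$ to the variables $(z_i, \mbf{v}_i, \mbf{w}_i)$ and the matrix $M^{(i)} =: M_{N-i}$, producing $M_{N-i+1} =: M^{(i-1)}$, and the lemma tells us the Jacobian factor is $|\det(z_i - M^{(i)})|^2$. The identity block $\mathrm{Id}_{i-1}$ acting on $\Omega_1 \times \cdots \times \Omega_{i-1}$ contributes a factor $1$. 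Hence
\begin{align*}
J &= \prod_{i=1}^{k} |\det(z_i - M^{(i)})|^2,
\end{align*}
where $M^{(i)}$ is the matrix fed into $T_i$. The remaining task is to rewrite each $\det(z_i - M^{(i)})$ in terms of the final data $(\mbf{z}, M^{(k)})$. Here one uses that $M^{(i)}$ is itself built from $z_{i+1}, \dots, z_k$ and $M^{(k)}$ by the later Householder steps, so that $M^{(i)}$ is unitarily conjugate to the block upper triangular matrix with diagonal entries $z_{i+1}, \dots, z_k$ followed by the block $M^{(k)}$; consequently its characteristic polynomial factorises and
\begin{align*}
\det(z_i - M^{(i)}) &= \Big(\prod_{j=i+1}^{k}(z_i - z_j)\Big)\,\det(z_i - M^{(k)}).
\end{align*}
Taking the product over $i$ and collecting the pairwise factors gives $\prod_{i<j}|z_i - z_j|^2 = |\Delta(\mbf{z})|^2$ together with $\prod_{i=1}^k |\det(z_i - M^{(k)})|^2$, which is the claimed formula.

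For the fibre computation, I would again argue inductively, now using \eqref{eq:inverseimage_Ti}. Fix $M$ with distinct eigenvalues. A preimage under $T$ is a choice, at each stage $i$, of an eigenvalue $z_i^{(n)}$ of the current matrix $M^{(i-1)}$ (with the accompanying $\mbf{v}_i, \mbf{w}_i, M^{(i)}$ then determined); one must check that the intermediate matrices $M^{(i)}$ all have distinct eigenvalues so that the lemma applies at every step. This holds because, as noted above, $M^{(i)}$ is conjugate to a block triangular matrix whose spectrum is $\{z_{i+1}, \dots, z_k\}$ together with the spectrum of $M^{(k)}$, and since the full matrix $M = M^{(0)}$ has distinct eigenvalues and $\mathrm{spec}(M) = \{z_1, \dots, z_k\} \cup \mathrm{spec}(M^{(k)})$, every submultiset is automatically multiplicity-free. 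Thus at step $i$ there are exactly (number of eigenvalues remaining) choices, but after projecting with $\pi$ we only record the ordered tuple $(z_1, \dots, z_k)$; the distinct values $z_1, \dots, z_k$ can be produced in any order, one per stage, and each ordering arises from exactly one branch of the iterated preimage. Therefore $\pi(T^{-1}(M)) = \bigcup_{\sigma \in S_k}\{(z_{\sigma_1}, \dots, z_{\sigma_k})\}$ as sets.

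The main obstacle is the bookkeeping around the intermediate matrices $M^{(i)}$: one must be careful that the partial Schur step at level $i$ is being applied to the correct matrix (the output of all the later steps, after stripping off the $\mathrm{Id}$ blocks), and that the block triangular structure displayed in the formula for $M = T(\dots)$ really does identify $\mathrm{spec}(M^{(i)})$ with $\{z_{i+1},\dots,z_k\} \cup \mathrm{spec}(M^{(k)})$ — this is what simultaneously delivers the factorisation of the determinants and the distinct-eigenvalue hypothesis needed to keep invoking the lemma. Once that structure is pinned down, both conclusions follow by a direct induction on $k$; I would write the induction explicitly with $k-1$ as the inductive hypothesis applied to the matrix $M^{(1)}$ after the first Householder step.
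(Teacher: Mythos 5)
Your proposal is correct and follows essentially the same route as the paper: multiplicativity of Jacobians over the composition, followed by factorising each $\det(z_i - M^{(i)})$ using the block upper triangular structure to extract the Vandermonde, and deducing the fibre statement by iterating \eqref{eq:inverseimage_Ti}. The paper records the determinant factorisation as a one-step recurrence ($\det(z_i - M^{(i)}) = (z_i - z_{i+1})\det(z_i - M^{(i+1)})$, modulo an index typo there) and is terser on the fibre count; you unroll the recurrence into the block-triangular identification and make explicit the (needed but implicit in the paper) observation that the intermediate matrices $M^{(i)}$ inherit distinct eigenvalues from $M$, which is what licenses repeated application of the lemma. No gap.
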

\begin{proof}
Note that for $i=2,...,k$ we have
\begin{align*}
    \det\left(M^{(i)}-z_{i}\right)&=\left({z_{i+1}}-z_{i}\right)\det\left(M^{(i+1)}-z_{i}\right),
\end{align*}
so that
\begin{align*}
    J&=\prod_{i=1}^{k}J_{i}\\
    &=|\Delta\left(\mbf{z}\right)|^{2}\prod_{i=1}^{k}\left|\det\left(M^{\left(k\right)}-z_{i}\right)\right|^{2}.
\end{align*}
\eqref{eq:inverseimage_T} follows directly from \eqref{eq:inverseimage_Ti} and $T^{-1}=\left(\text{Id}_{k-1}\oplus T_{k}\right)^{-1}\circ\cdots\circ\left(\text{Id}_{1}\oplus T_{2}\right)^{-1}\circ T_{1}^{-1}$.
\end{proof}

Now consider a random matrix $M$ with density $\rho$ with respect to the Lebesgue measure.
\begin{prop}
For any bounded and measurable $f:\mbb{C}^{k}\to\mbb{R}$ we have
\begin{align}\label{eq:fexp}
    \mbb{E}\left[\sum_{{i_{1}\neq\cdots\neq i_{k}}}f\left(z_{i_{1}},...,z_{i_{k}}\right)\right]&=\int_{\Omega}f(\pi\left(x\right))\rho({T}\left(x\right))J(x)d\omega(x)
\end{align}
\end{prop}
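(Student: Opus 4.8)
The plan is to read \eqref{eq:fexp} as the change of variables (area) formula for the surjection $T\colon\Omega\to\mbf{M}_{N}$, whose relevant geometry — the Jacobian $J$ and the structure of the generic fibres — is already recorded in \Cref{cor1} and the lemma preceding it.

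First I would cut $\Omega$ down to the open, full-measure subset $\Omega'$ on which $T$ is $C^{1}$ and $J>0$, and correspondingly to the set $\mbf{M}_{N}'$ of matrices with $N$ distinct eigenvalues, whose complement is the zero set of the discriminant of the characteristic polynomial, hence a proper algebraic subvariety and Lebesgue-null. Two reductions are then free: since $\rho\,dM$ is absolutely continuous, $\mbb{P}(M\notin\mbf{M}_{N}')=0$, so the left side of \eqref{eq:fexp} is unchanged upon restricting the expectation to $\{M\in\mbf{M}_{N}'\}$; and on the right side the integrand $f(\pi(x))\,\rho(M(x))\,J(x)$ vanishes where $J=0$, while the remaining part of $\Omega\setminus\Omega'$ (where some Householder map degenerates) is $\omega$-null. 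One should also note that $\dim_{\mbb{R}}\Omega=\dim_{\mbb{R}}\mbf{M}_{N}=2N^{2}$, so that $J$ is genuinely a square Jacobian determinant, and that the boundary strata $\{v_{1}=0\}$ of the $S^{N-i}_{+}$ are $\omega$-null.

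Next I would apply the area formula to the smooth map $T\colon\Omega'\to\mbf{M}_{N}$, whose Jacobian with respect to $d\omega$ and $dM$ is $J$ by \Cref{cor1}: for any bounded measurable $h$,
\[
\int_{\Omega'}h(x)\,J(x)\,d\omega(x)=\int_{\mbf{M}_{N}}\Bigl(\sum_{x\in T^{-1}(y)\cap\Omega'}h(x)\Bigr)\,dy .
\]
Specialising to $h(x)=f(\pi(x))\,\rho(M(x))$ makes the left-hand side equal to the right-hand side of \eqref{eq:fexp}. For the right-hand side above, take $y\in\mbf{M}_{N}'$: by \eqref{eq:inverseimage_T}, and since at each Householder step the data $\mbf{v}_{i},\mbf{w}_{i}$ and the new minor are uniquely determined by the eigenvalue selected (the lemma preceding \Cref{cor1}), the map $\pi$ restricts to a bijection from $T^{-1}(y)=T^{-1}(y)\cap\Omega'$ onto $\bigcup_{\sigma\in S_{k}}\{(z_{\sigma_{1}},\dots,z_{\sigma_{k}})\}$, the ordered selections of $k$ distinct eigenvalues of $y$; hence $\sum_{x\in T^{-1}(y)}f(\pi(x))=\sum_{\sigma\in S_{k}}f(z_{\sigma_{1}},\dots,z_{\sigma_{k}})$. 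On the null set $y\notin\mbf{M}_{N}'$ the inner sum is still finite and is weighted by $\rho(y)\,dy$, so it contributes nothing. Integrating in $y$ and recognising $\int_{\mbf{M}_{N}}\rho(y)(\,\cdot\,)\,dy=\mbb{E}[\,\cdot\,]$ gives \eqref{eq:fexp}; every step is legitimate because $f$ is bounded, $\rho$ is a probability density, and all fibres are finite.

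The one place needing real care is the area formula for $T$ itself: one must justify treating a map that is neither globally injective nor a global submersion as a change of variables. The clean route is to cover $\Omega'$ by countably many open sets on each of which $T$ is a diffeomorphism onto its image, apply the ordinary change of variables formula on each, and sum, using \eqref{eq:inverseimage_T} to account for multiplicities; equivalently one may quote the area formula for Lipschitz maps between equidimensional manifolds. No new ideas enter — the algebraic content (the value of $J$ and the description of the fibres) is precisely what \Cref{cor1} and the preceding lemma supply.
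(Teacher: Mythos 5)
Your proposal is correct and follows essentially the same route as the paper: both invoke the area formula for the map $T$ (using the Jacobian and the fibre description from \Cref{cor1} and the preceding lemma) and then identify the fibre sum with the symmetrised sum over eigenvalue $k$-tuples on the full-measure set of matrices with distinct eigenvalues. The only presentational differences are that the paper verifies integrability of $(\rho\circ T)J$ by a preliminary application of the area formula with $g=\rho\circ T$ plus monotone convergence on a compact exhaustion, whereas you defer this to a closing remark, and the paper localises the Lipschitz hypothesis via compact exhaustion rather than a countable diffeomorphism cover; neither constitutes a genuinely different argument.
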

\begin{proof}
Since matrices with density have distinct eigenvalues with probability one, we have by \Cref{cor1}
\begin{align}
    \mbb{E}\left[\sum_{{i_{1}\neq\cdots\neq i_{k}}}f\left(z_{i_{1}},...,z_{i_{k}}\right)\right]&=\mbb{E}\left[\sum_{x\in T^{-1}\left(M\right)}f(\pi\left(x\right))\right].
\end{align}
Recall the area formula (see e.g. Section 3.2 in the book \cite{federer_geometric_1996}): for $m\geq n$, a Lipschitz function $T:\mbb{R}^{n}\to\mbb{R}^{m}$ with Jacobian $J$ and a $g:\mbb{R}^{n}\to\mbb{R}$ such that $g\geq0$ or $gJ$ is integrable, we have
\begin{align*}
    \int_{\mbb{R}^{n}}g\left(x\right)J(x)dx&=\int_{\mbb{R}^{m}}\sum_{x\in T^{-1}\left(y\right)}g(x)d\mc{H}^{n}(y),
\end{align*}
where $\mc{H}^{n}$ is the $n$-dimensional Hausdorff measure (when $n=m$ this is simply the Lebesgue measure). This readily extends to the case of a Lipschitz function $T:\mathfrak{M}\to\mbb{R}^{m}$, where $\mathfrak{M}$ is an $n$-dimensional Riemannian manifold and we replace $dx$ on the left hand side above by the volume measure on $\mathfrak{M}$. Now take $n=m=2N^{2}$, $\mathfrak{M}=\Omega$, $T$ to be the partial Schur decomposition  and $g=\rho\circ T$. Let $K_{j}$ be an increasing sequence of compact sets whose union is $ \mbf{M}_{N}$ and set $N_{j}=T^{-1}\left(K_{j}\right)$. Then $N_{j}$ is compact and the restriction of $T$ to $N_{j}$ is Lipschitz. Thus we can apply the area formula to obtain
\begin{align*}
    \int_{N_{j}}\rho(T\left(x\right))J(x)d\omega(x)&=\int_{K_{j}}\sum_{x\in T^{-1}\left(M\right)}\rho(T\left(x\right))dM=\begin{pmatrix}N\\k\end{pmatrix}\int_{K_{j}}\rho\left(M\right)dM.
\end{align*}
Taking the limit $j\to\infty$ by the monotone convergence theorem we conclude that $\left(\rho\circ T\right)J\in L^{1}\left(\Omega\right)$. Applying the area formula again with $g=\left(f\circ \pi\right)(\rho\circ T)$ we obtain \eqref{eq:fexp}.
\end{proof}

From this follows the main result of this section, namely an integral formula for the $k$-point function $\rho^{\left(k\right)}_{M}$ of random matrices $M$ with continuous distributions.
\begin{cor}\label{cor2}
{Let $M$ be a random matrix with density $\rho$ with respect to the Lebesgue measure.} The $k$-point correlation function satisfies
\begin{align}\label{eq:kpointint}
    \rho^{\left(k\right)}_{M}\left(\mbf{z}\right)&=\frac{|\Delta\left(\mbf{z}\right)|^{2}}{\left(2\pi\right)^{k}}\int_{S^{N-1}}\int_{\mbb{C}^{N-1}}\cdots\int_{S^{N-k}}\int_{\mbb{C}^{N-k}}\int_{ \mbf{M}_{N-k}}\rho(T({z_{1}},\mbf{v}_{1},\mbf{w}_{1},...,{z_{k}},\mbf{v}_{k},\mbf{w}_{k},M^{\left(k\right)}))\nonumber\\
    &\times\left(\prod_{i=1}^{k}\left|\textup{det}(M^{\left(k\right)}-z_{i})\right|^{2}\right)dM^{\left(k\right)}dS_{N-k}\left(\mbf{v}_{k}\right)d\mbf{w}_{k}\cdots dS_{N-1}\left(\mbf{v}_{1}\right)d\mbf{w}_{1}.
\end{align}
\end{cor}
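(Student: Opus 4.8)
The plan is to obtain \eqref{eq:kpointint} as the special case $f = \mathbf{1}_{B}$ (indicator of a small box around $\mathbf{z}$), or more cleanly by a direct density computation, from the previous proposition. Starting from \eqref{eq:fexp}, the left-hand side $\mathbb{E}\big[\sum_{\sigma\in S_k} f(z_{\sigma_1},\dots,z_{\sigma_k})\big]$ is by definition $\int_{\mathbb{C}^k} f(\mathbf{z})\,\rho^{(k)}_M(\mathbf{z})\,d\mathbf{z}$, since the sum over $\sigma\in S_k$ exactly reconstructs the (unordered) $k$-point correlation function from the ordered one. On the right-hand side I would use \Cref{cor1} to substitute $J(x) = |\Delta(\mathbf{z})|^2\prod_{i=1}^k|\det(z_i - M^{(k)})|^2$, and observe that the integrand $\rho(M(x))J(x)$ together with $f(\pi(x)) = f(\mathbf{z})$ depends on the variables only through $\mathbf{z}=(z_1,\dots,z_k)$ (appearing in $f$, in $|\Delta(\mathbf{z})|^2$, in the determinantal factors, and inside $\rho(M(x))$ via the explicit partially-triangular form of $M(x)$) and through $(\mathbf{v}_1,\mathbf{w}_1,\dots,\mathbf{v}_k,\mathbf{w}_k,M^{(k)})$.

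The key step is then to apply Fubini's theorem to $d\omega(x) = \big(\prod_{j=1}^k dz_j\,dS_{N-j}(\mathbf{v}_j)\,d\mathbf{w}_j\big)dM^{(k)}$, pulling the $dz_1\cdots dz_k$ integration to the outside. This is justified because $(\rho\circ T)J \in L^1(\Omega)$ was established in the proof of the preceding proposition, so the full integrand is absolutely integrable and the order of integration may be freely exchanged. After doing so, \eqref{eq:fexp} reads $\int_{\mathbb{C}^k} f(\mathbf{z})\big[\tfrac{|\Delta(\mathbf{z})|^2}{(2\pi)^k}\int\cdots\int \rho(\cdots)\prod_i|\det(z_i - M^{(k)})|^2\,dM^{(k)}\,dS_{N-k}(\mathbf{v}_k)\,d\mathbf{w}_k\cdots\big]\,d\mathbf{z}$, where the factor $(2\pi)^{-k}$ arises from normalising each sphere: note that $dS_{N-j}$ in the statement of \Cref{cor2} denotes the \emph{uniform probability} measure on $S^{N-j}$, which differs from the Haar measure used in \eqref{eq:fexp}; more precisely the half-sphere $S^{N-i}_+$ appearing in $\Omega_i$ carries total mass one under the normalisation implicit in the statement, and reconciling the two conventions (the factor of $2$ from $S^{N-i}_+$ versus $S^{N-i}$, absorbed into $d\mathbf{w}_i$ normalisation, and the $2\pi$ from the angular freedom removed by the $S^{N-i}_+$ constraint that fixes $v_1\geq 0$) produces exactly $(2\pi)^{-k}$. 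Since $f$ is an arbitrary bounded measurable function, the bracketed expression must equal $\rho^{(k)}_M(\mathbf{z})$ for a.e.\ $\mathbf{z}$, which is \eqref{eq:kpointint}; if $\rho$ is continuous the identity holds everywhere.

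The main obstacle I anticipate is purely bookkeeping: correctly tracking the normalisation constants between the Haar-measure formulation of \eqref{eq:fexp}, in which $\Omega_i = \mathbb{C}\times S^{N-i}_+\times\mathbb{C}^{N-i}$ with $d\omega_i = dz_i\,dS_{N-i}(\mathbf{v}_i)\,d\mathbf{w}_i$, and the probability-measure convention in the displayed right-hand side of \eqref{eq:kpointint}. One must check that restricting to the half-sphere $S^{N-i}_+$ (rather than the full sphere) is precisely what makes the Schur decomposition map $T$ generically $k!$-to-one on the fibres — matching the $\binom{N}{k}$ versus $k!\binom{N}{k} = N!/(N-k)!$ counting in the proof of the proposition — and that the leftover $U(1)$ phase ambiguity in choosing the Householder vector $\mathbf{v}_i$ (killed by the $v_1\geq 0$ condition) contributes the factor $2\pi$ per step. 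All other steps — substituting the Jacobian, invoking the already-proved integrability, and applying Fubini — are routine once this constant is pinned down. No asymptotics or hard analysis is involved; \Cref{cor2} is essentially a clean restatement of \eqref{eq:fexp} with the $\mathbf{z}$-integration exposed.
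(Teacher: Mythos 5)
Your overall approach matches the paper's: equate \eqref{eq:fexp} with the defining identity $\mbb{E}\bigl[\sum_{\sigma}f(z_{\sigma_1},\dots,z_{\sigma_k})\bigr]=\int f(\mbf{z})\rho^{(k)}_M(\mbf{z})\,d\mbf{z}$, substitute the Jacobian from \Cref{cor1}, pull the $d\mbf{z}$ integration outside (justified by the $L^1$ integrability of $(\rho\circ T)J$ established in the proof of the proposition), and read off the density. That much is correct and is exactly what the paper does.

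Where your write-up goes astray is the explanation of the $(2\pi)^{-k}$ prefactor. There is no change of measure convention between \eqref{eq:fexp} and \eqref{eq:kpointint}: in both displays $dS_n$ is the same Haar measure on $S^n$, not a probability measure, so there is no ``factor of $2$'' and nothing is ``absorbed into $d\mbf{w}_i$ normalisation''. Recall that the constraint $v_1\geq0$ in the definition $S^n_+=\{\mbf{v}\in S^n:v_1\geq0\}$ refers to the \emph{complex} first component, so $S^{N-j}_+$ is not a geometric half of the sphere at all; it is a cross-section (fundamental domain) for the free $U(1)$ action $\mbf{v}_j\mapsto e^{i\phi}\mbf{v}_j$. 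Since both the integrand in \eqref{eq:fexp} and the measure $d\omega$ are invariant under this action, lifting each half-sphere integral to the corresponding full sphere $S^{N-j}$ over-counts by exactly the orbit volume $2\pi$, and dividing back out for each of the $k$ spheres produces the entire $(2\pi)^{-k}$. That single observation replaces the three conflicting mechanisms you listed; the fact that you reached the correct constant is incidental to an accounting that would not survive scrutiny.
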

\begin{proof}
The $k$-point function is defined by
\begin{align*}
    \mbb{E}\left[\sum_{{i_{1}\neq\cdots\neq i_{k}}}f\left(z_{i_{1}},...,z_{i_{k}}\right)\right]&=\int_{\mbb{C}^{k}}f\left(\mbf{z}\right)\rho^{\left(k\right)}_{N}\left(\mbf{z}\right)d\mbf{z}.
\end{align*}
Equating this with the right hand side of \eqref{eq:fexp} we can read off \eqref{eq:kpointint}. We have lifted the integrals over $S^{N-j}_{+}$ to $S^{N-j}$ using the invariance of the integrand and measure under transformations $\mbf{v}_{j}\mapsto e^{i\phi}\mbf{v}_{j}$. Hence the additional factor of $1/\left(2\pi\right)^{k}$ in \eqref{eq:kpointint}.
\end{proof}

\section{Proof of \Cref{thm1}}
In the following we fix $\epsilon>0$, $z\in\mbb{C}$ such that $|z|<1$, and $t\geq N^{-1/3+\epsilon}$. Let $A$ be a deterministic matrix satisfying \eqref{eq:A_g}, \eqref{eq:A_alpha}, \eqref{eq:A_beta}, \eqref{eq:A_gamma} and \eqref{eq:A2}, and $B$ be a complex Ginibre matrix. Define the Gauss-divisible matrix $M_{t}=A+\sqrt{t}B$; this matrix has the following density with respect to the Lebesgue measure on $\mbf{M}_{N}$:
\begin{align*}
    \rho\left(M_{t}\right)&=\left(\frac{N}{\pi t}\right)^{N^{2}}e^{-\frac{N}{t}\tr\left(M_{t}-A\right)^{*}\left(M_{t}-A\right)}.
\end{align*}

Our goal is to evaluate the asymptotics of the integral formula in \eqref{eq:kpointint} for the $k$-point function. For this we need an expression for $\rho\left(M_{t}\right)$ in terms of the new variables of the partial Schur decomposition. Take $M_{t}$ as in the right hand side of \eqref{eq:pSchur} with $U$ as in \eqref{eq:U}. Then we have
\begin{align}
    U^{*}AU&=\begin{pmatrix}a_{1}&\mbf{b}_{1}^{*}V_{1}^{*}&&&\\
    V_{1}\mbf{c}_{1}&a_{2}&\mbf{b}_{2}^{*}V_{2}^{*}&&\\
    &V_{2}\mbf{c}_{2}&\ddots&\ddots&\\
    &&\ddots&a_{k}&\mbf{b}_{k}^{*}\\
    &&&\mbf{c}_{k}&A^{\left(k\right)}\end{pmatrix},\label{eq:UAU}
\end{align}
{where the variables $a_{i},\mbf{b}_{i},\mbf{c}_{i}$ and $A^{(i)}$ are defined recursively by
\begin{align}
    R(\mbf{v}_{i})A^{(i-1)}R(\mbf{v}_{i})&:=\begin{pmatrix}a_{i}&\mbf{b}_{i}^{*}\\\mbf{c}_{i}&A^{(i)}\end{pmatrix},
\end{align}
with $A^{\left(0\right)}=A$, and thus depend on $\mbf{v}_{j},\,j\leq i$. Explicitly,}
\begin{align}
    a_{i}&=\mbf{v}_{i}^{*}A^{\left(i-1\right)}\mbf{v}_{i},\nonumber\\
    \begin{pmatrix}0\\\mbf{b}_{i}\end{pmatrix}&=R(\mbf{v}_{i})\left(1-\mbf{v}_{i}\mbf{v}_{i}^{*}\right)A^{(i-1)*}\mbf{v}_{i},\nonumber\\
    \begin{pmatrix}0\\\mbf{c}_{i}\end{pmatrix}&=R(\mbf{v}_{i})\left(1-\mbf{v}_{i}\mbf{v}_{i}^{*}\right)A^{(i-1)}\mbf{v}_{i},\nonumber\\
    \begin{pmatrix}0&0\\0&A^{\left(i\right)}\end{pmatrix}&=R(\mbf{v}_{i})\left(1-\mbf{v}_{i}\mbf{v}_{i}^{*}\right)A^{(i-1)}\left(1-\mbf{v}_{i}\mbf{v}_{i}^{*}\right)R(\mbf{v}_{i}).\label{eq:Arecurrence}
\end{align}
The matrices $A^{\left(i\right)}$ are projections of $A$ onto subspaces of codimension $i$. We will denote by a superscript $\left(i\right)$ any quantity for which we replace $A$ by $A^{\left(i\right)}$ and $1_{N}$ by $1_{N-i}$ in its definition, e.g. $\mc{H}^{\left(i\right)}_{z}=\begin{pmatrix}0&A^{\left(i\right)}-z\\A^{\left(i\right)*}-\bar{z}&0\end{pmatrix}$, $G^{\left(i\right)}_{z}\left(\eta\right)=(\mc{H}^{\left(i\right)}_{z}-i\eta)^{-1}$, $E^{(i)}=\begin{pmatrix}0&1\\0&0\end{pmatrix}\otimes1_{N-i}$ and so on.

In terms of the variables on the right hand side of \eqref{eq:UAU} and $M^{\left(k\right)}_{t}\in \mbf{M}_{N-k}$, we have
\begin{align*}
    \rho\left(M_{t}\right)&=\left(\frac{N}{\pi t}\right)^{N^{2}}\exp\left\{-\frac{N}{t}\sum_{i=1}^{k}\left[|z_{i}-a_{i}|^{2}+\left\|\mbf{w}_{i}-\mbf{b}_{i}\right\|^{2}+\left\|\mbf{c}_{i}\right\|^{2}\right]\right.\nonumber\\
    &\left.-\frac{N}{t}\tr(M^{\left(k\right)}_{t}-A^{\left(k\right)})^{*}(M^{\left(k\right)}_{t}-A^{\left(k\right)})\right\}.
\end{align*}
We need to insert the above expression into \eqref{eq:kpointint} and integrate over $\mbf{v}_{i},\,\mbf{w}_{i},\,i=1,...,k$ and $M^{\left(k\right)}_{t}$.

The integrand depends on $\mbf{w}_{i}$ only through the term $e^{-\frac{N}{t}\left\|\mbf{w}_{i}-\mbf{b}_{i}\right\|^{2}}$, and so the integral over $\mbf{w}_{i}$ gives a constant
\begin{align*}
    \int_{\mbb{C}^{N-i}}e^{-\frac{N}{t}\left\|\mbf{w}_{i}-\mbf{b}_{i}\right\|^{2}}d\mbf{w}_{i}&=\left(\frac{\pi t}{N}\right)^{N-i}.
\end{align*}
To simplify the notation for the integral formula further, we define the following probability measures on the spheres $S^{N-i}$:
\begin{align}
    d\nu_{i}\left(\mbf{v}_{i}\right)&:=\frac{1}{K_{i}(z_{i};A^{\left(i-1\right)})}e^{-\frac{N}{t}\left\|(A^{\left(i-1\right)}-z_{i})\mbf{v}_{i}\right\|^{2}}dS_{N-i}\left(\mbf{v}_{i}\right).
\end{align}
where
\begin{align}
    K_{i}(z_{i};A^{\left(i-1\right)})&=\int_{S^{N-i}}e^{-\frac{N}{t}\left\|(A^{\left(i-1\right)}-z_{i})\mbf{v}_{i}\right\|^{2}}dS_{N-i}\left(\mbf{v}_{i}\right).
\end{align}
Note that $\nu_{i}$ and $K_{i}$ depend on $\mbf{v}_{l},\,l=1,...,i-1$ through $A^{\left(i-1\right)}$ but we have suppressed this dependence to simplify notation. We should therefore keep in mind that we have to integrate over $\mbf{v}_{i}$ in descending order of $i$.

We now have the following formula for the $k$-point function:
\begin{align}
    \rho^{\left(k\right)}_{N}\left(\mbf{z};A\right)&=\int_{S^{N-1}}\cdots\int_{S^{N-k}}F(\mbf{z};A^{\left(k\right)})\left(\prod_{i=1}^{k}b_{N,i}K_{i}(z_{i};A^{\left(i-1\right)})\right)d\nu_{k}\left(\mbf{v}_{k}\right)\cdots d\nu_{1}\left(\mbf{v}_{1}\right),
\end{align}
where
\begin{align}
    F(\mbf{z};A^{\left(k\right)})&:=d_{N}|\Delta\left(\mbf{z}\right)|^{2}\int_{ \mbf{M}_{N-k}}e^{-{N}\tr B^{\left(k\right)*}B^{\left(k\right)}}\prod_{i=1}^{k}\left|\det\left(A^{\left(k\right)}+{\sqrt{t}}B^{\left(k\right)}-z_{i}\right)\right|^{2}dB^{\left(k\right)},
\end{align}
and
\begin{align}
    b_{N,i}&=\frac{1}{t}\sqrt{\frac{N}{2\pi t}}\left(\frac{N}{\pi t}\right)^{N-i},\\
    d_{N}&=\frac{1}{\pi^{k}}\left(\frac{Nt}{2\pi}\right)^{k/2}\left(\frac{N}{\pi t}\right)^{\left(N-k\right)^{2}}.
\end{align}

We have split the constant $\left(N/\pi t\right)^{N^{2}}$ into $b_{N,i}$ and $d_{N}$ to reflect the scaling of $K_{i}$ and $F$. Note that $F$ can be rewritten as an expectation value with respect to a Gauss-divisible matrix of dimension $N-k$, i.e. we can write
\begin{align*}
    F(\mbf{z};A^{\left(k\right)})&=d_{N}|\Delta\left(\mbf{z}\right)|^{2}\mbb{E}\left[\prod_{i=1}^{k}\left|\det\left(M^{\left(k\right)}_{t}-z_{i}\right)\right|^{2}\right],
\end{align*}
where
\begin{align*}
    M^{\left(k\right)}_{t}&=A^{\left(k\right)}+\sqrt{t}B^{\left(k\right)}
\end{align*}
and the expectation is taken with respect to the complex Gaussian matrix $B^{\left(k\right)}$ whose entries have mean zero and variance $1/N$.

Let us now prove the first statement of \Cref{thm1}, namely that there is a unique $\eta_{z,t}$ such that
\begin{align}\label{eq:trG}
    \frac{t}{2\eta_{z,t}}\Im\left\langle G_{z}\left(\eta_{z,t}\right)\right\rangle&=t\left\langle H_{z}\left(\eta_{z,t}\right)\right\rangle=1,
\end{align}
and
\begin{align}
    c_{1}t\leq\eta_{z,t}\leq c_{2}t.\label{eq:etaineq}
\end{align}
Let $\Omega=[c_{1}t,c_{2}t]$, where $c_{1},c_{2}$ depend on the constants in \eqref{eq:A_g}, and $f\left(\eta\right)=\frac{t}{2}\Im\left\langle G_{z}\left(\eta\right)\right\rangle=t \Im g_{z}\left(\eta\right)$. Any non-zero solution of \eqref{eq:trG} is a fixed point of $f$. From the first assumption \eqref{eq:A_g}, we have
\begin{align*}
    f\left(\Omega\right)\subset\Omega;
\end{align*}
since $f$ is continuous it must therefore have at least one fixed point in $\Omega$. On the other hand, $\left\langle H_{z}\left(\eta\right)\right\rangle$ is monotonic in $\eta$ and so \eqref{eq:trG} has at most one solution.

Let $\alpha_{z,t}:=\alpha_{z}\left(\eta_{z,t}\right)$ and similarly for $\beta_{z,t},\gamma_{z,t}$, where these quantities were defined in \eqref{eq:alpha}, \eqref{eq:beta} and \eqref{eq:gamma}. Henceforth we will suppress the argument of the resolvents $G_{z},H_{z}$ and $\wt{H}_{z}$ when they are evaluated at $\eta_{z,t}$, i.e. $G_{z}:=G_{z}\left(\eta_{z,t}\right)$. {Fix $z\in\mbb{C}$, $\mbf{z}\in\mbb{C}^{k}$ and define the rescaled parameters
\begin{align}
    \mbf{w}&:=z+\frac{\mbf{z}}{\sqrt{N\sigma_{z,t}}}.\label{eq:w}
\end{align}
We are interested in the shifted and rescaled correlation function}
\begin{align*}
    \wt{\rho}^{\left(k\right)}_{N}\left(\mbf{z};A\right)&:=\frac{1}{(N\sigma_{z,t})^{k}}\rho^{\left(k\right)}_{N}\left(\mbf{w};A\right)\\
    &=\int_{S^{N-1}}\cdots\int_{S^{N-k}}\wt{F}(\mbf{z};A^{\left(k\right)})\left(\prod_{i=1}^{k}\wt{K}_{i}(z_{i};A^{\left(i-1\right)})\right)d\nu_{k}\left(\mbf{v}_{k}\right)\cdots d\nu_{1}\left(\mbf{v}_{1}\right),
\end{align*}
where
\begin{align}
    \wt{K}_{i}\left(z_{i}\right)&:=b_{N,i}K_{i}(w_{i};A^{\left(i-1\right)}),\label{eq:Ktilde}\\
    \wt{F}(\mbf{z};A^{\left(k\right)})&:=\frac{1}{(N\sigma_{z,t})^{k}}F(\mbf{w};A^{\left(k\right)}).\label{eq:Ftilde}
\end{align}
The second statement of the theorem is equivalent to the claim that
\begin{align}
    \lim_{N\to\infty}\wt{\rho}^{\left(k\right)}_{N}\left(\mbf{z};A\right)&=\rho^{\left(k\right)}_{GinUE}\left(\mbf{z}\right),
\end{align}
uniformly on compact subsets of $\mbb{C}^{k}$.

Define $V_{i}=1_{2}\otimes\mbf{v}_{i}{\in\mbb{C}^{2(N-i+1)\times2}}$ and
\begin{align}
    \phi_{z,t}&:=\frac{\eta_{z,t}^{2}}{t}-\frac{1}{N}\log\left|\det(\mc{H}_{z}-i\eta_{z,t})\right|,\label{eq:phi}\\
    \psi_{i}&:=\exp\left\{-\sqrt{\frac{N}{\sigma_{z,t}}}\left\langle G_{z}Z_{i}\right\rangle-\Re\left(\bar{\tau}_{z,t}z_{i}^{2}\right)+|z_{i}|^{2}\right\},\label{eq:psi}
\end{align}
where $Z_{i}=\begin{pmatrix}0&z_{i}\\\bar{z}_{i}&0\end{pmatrix}$ and
\begin{align}
    \tau_{z,t}&:=\frac{\beta_{z,t}^{2}+\gamma_{z,t}\delta_{z,t}}{\gamma_{z,t}\sigma_{z,t}},\label{eq:tau}\\
    \delta_{z,t}&:=\left\langle\left(H_{z}(A-z)\right)^{2}\right\rangle.\label{eq:delta}
\end{align}
The following lemma gives the asymptotic behaviour of $F$ and $K_{i}$.
\begin{lemma}
We have
\begin{align}
    \wt{F}\left(\mbf{z};A^{\left(k\right)}\right)&=\left[1+O\left(\frac{1}{\sqrt{Nt^{3}}}\right)\right]\left(\frac{t^{3}\gamma_{z,t}}{\eta_{z,t}^{2}}\right)^{k/2}\rho^{\left(k\right)}_{GinUE}\left(\mbf{z}\right)\nonumber\\&\prod_{i=1}^{k}\left(\frac{\eta^{2}_{z,t}\left|\det V_{i}^{*}G^{\left(i-1\right)}_{z}V_{i}\right|}{t^{2}\gamma_{z,t}\sigma_{z,t}}\right)^{k}\psi_{i}e^{-N\phi_{z,t}},\label{eq:Fasymptotic}
\end{align}
and for $i=1,...,k$
\begin{align}
    \wt{K}_{i}\left(z_{i};A^{\left(i-1\right)}\right)&=\left[1+O\left(\frac{\log N}{\sqrt{Nt^{2}}}\right)\right]\sqrt{\frac{\eta^{2}_{z,t}}{t^{3}\gamma_{z,t}}}\frac{e^{N\phi_{z,t}}}{\psi_{i}}\prod_{j=1}^{i-1}\left|\det^{-1}V_{j}^{*}G^{(j-1)}_{z}V_{j}\right|,\label{eq:Kasymptotic}
\end{align}
where the error terms are uniform in $\mbf{v}_{j}$, $j=1,...,k$. If \eqref{eq:A3} holds, then the error becomes $O\left(\frac{\log N}{\sqrt{Nt^{2}}}\right)$.
\end{lemma}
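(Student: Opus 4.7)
The plan is to evaluate both $\wt{K}_i$ and $\wt{F}$ by the saddle-point method with common critical point $\eta = \eta_{z,t}$, characterised by $t\langle H_z(\eta_{z,t})\rangle = 1$. The exponential factors $e^{\pm N\phi_{z,t}}$ and the determinants $\det V_j^* G_z^{(j-1)} V_j$ will arise from the leading saddle contributions; the $\psi_i$ factors from Taylor expansion of the action in the small shift $z\mapsto z+z_i/\sqrt{N\sigma_{z,t}}$; and the GinUE density in $\wt{F}$ from integration over the massless (Goldstone) directions of the Hessian.

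For $\wt{K}_i$ I would apply \Cref{lem:sphericalint} with $k=1$. The Fourier transform $\hat f$ is Gaussian in $\mbf{v}$ and evaluates explicitly to the inverse determinant of $ix + (N/t)(A^{(i-1)}-z')^*(A^{(i-1)}-z')$, where $z' = z + z_i/\sqrt{N\sigma_{z,t}}$. A contour deformation turns this into a one-dimensional integral of Laplace type $\int e^{-N\Phi^{(i-1)}(\eta;z_i)}\,d\eta$ whose saddle equation is $t\langle H_z^{(i-1)}(\eta)\rangle = 1$. Laplace's method around this saddle produces the prefactor $\sqrt{\eta_{z,t}^2/(t^3\gamma_{z,t})}$ (with $\gamma_{z,t}$ entering as the Hessian, since $\partial_\eta\langle H_z\rangle$ is, up to constants, $\eta^{-2}\gamma_{z,t}$) and the leading exponent $-N\phi^{(i-1)}_{z,t}$. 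A repeated application of \Cref{lem:minorresolvent} along the sequence of rank-one projections relates $\phi^{(i-1)}_{z,t}$ to $\phi_{z,t}$ and generates the $\prod_{j<i}|\det V_j^* G_z^{(j-1)} V_j|^{-1}$ product. The $z\mapsto z'$ shift contributes linearly through $\langle G_z Z_i\rangle/\sqrt{N\sigma_{z,t}}$ and quadratically through a combination of $\alpha_{z,t},\beta_{z,t},\gamma_{z,t},\delta_{z,t}$; by the definition \eqref{eq:tau} these quadratic pieces recombine into $\Re(\bar{\tau}_{z,t}z_i^2)-|z_i|^2$, giving the $\psi_i^{-1}$ factor.

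For $\wt{F}$ I would first perform the Gaussian integration over $B^{(k)}$, equivalently introducing Hubbard--Stratonovich or supersymmetric variables, to reduce $\mbb{E}\bigl[\prod_{i=1}^k|\det(M_t^{(k)}-z_i)|^2 \mid A^{(k)}\bigr]$ to a finite-dimensional integral over a Hermitian matrix $Q$ whose action $\Psi(Q;\mbf{z};A^{(k)})$ has a saddle proportional to the identity and is once again governed by $t\langle H_z\rangle = 1$. The Hessian in the massive directions transverse to the zero modes reproduces the $(t^3\gamma_{z,t}/\eta_{z,t}^2)^{k/2}$ prefactor and the exponent $-N\phi_{z,t}$, and a further application of \Cref{lem:minorresolvent} produces the $|\det V_i^* G_z^{(i-1)} V_i|$ factors. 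The remaining Goldstone directions, parametrised by $\mbf{U}_k/\mbf{U}_1^k$, can be integrated exactly via an HCIZ-type formula; combined with $|\Delta(\mbf{z})|^2$ and the $\psi_i$ contributions they produce the determinantal structure of $\rho^{(k)}_{GinUE}(\mbf{z})$.

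The main obstacles are twofold. First, the lemma demands errors uniform in $\mbf{v}_j$, so the assumptions \eqref{eq:A_g}--\eqref{eq:A2} must be transferred from $A$ to each rank-$(i{-}1)$ projection $A^{(i-1)}$; this is where the restriction $Nt^2\gg 1$ enters, since the measures $\nu_j$ concentrate on shells of width $\sim (Nt^2)^{-1/2}$ around their maxima, and on such shells the finite-rank update perturbs the relevant spectral data negligibly. Second, the $O(1/\sqrt{Nt^3})$ error for $\wt{F}$ comes from third-order Laplace terms involving $\langle G_z B_1 G_z B_2 G_z B_3\rangle$; without \eqref{eq:A3} one can only bound these via Cauchy--Schwarz and \eqref{eq:A2} as in \eqref{eq:trace2}, giving the displayed rate, whereas \eqref{eq:A3} controls them directly and improves the error to $O(\log N/\sqrt{Nt^2})$.
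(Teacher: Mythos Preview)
Your overall strategy matches the paper's: the one-dimensional spherical-integral representation plus Laplace for $\wt{K}_i$, and a finite-dimensional Hubbard--Stratonovich representation plus saddle-point and HCIZ for $\wt{F}$. Two details are off, however. First, for $\wt{F}$ the HS field that decouples the Grassmann quartic $-\tfrac{t}{N}\sum_{j,l}(\chi_j^*\psi_l)(\psi_l^*\chi_j)$ is a \emph{complex} $k\times k$ matrix $X\in\mbf{M}_k$, not a Hermitian one; after the SVD $X=U_1\mbf{s}U_2^*$ the singular values localise at $\eta_{z,t}$ (this is where Fischer's inequality is used to obtain a priori bounds justifying the localisation), and the soft directions are the pair $(U_1,U_2)$ rather than a single $\mbf{U}_k/\mbf{U}_1^k$. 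The integrand depends on these only through $U_1U_2^*$, and only after collapsing to a single $U(k)$ does one apply HCIZ; a Hermitian matrix $Q$ appears merely as a device to recombine the $\mbf{s}$-integral with one unitary average, not as the primary HS field. Second, the uniformity in $\mbf{v}_j$ for this lemma does not come from concentration of the measures $\nu_j$: the estimates \eqref{eq:Fasymptotic} and \eqref{eq:Kasymptotic} are deterministic, obtained pointwise in $\mbf{v}_1,\dots,\mbf{v}_k$ via interlacing and the norm bound of \Cref{lem:minorresolvent}, which control each rank-one passage $A^{(i)}\to A^{(i-1)}$ by $O(1/(N\eta^2))$. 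Concentration of $\nu_j$ is used only in the separate \Cref{lem:nuExpectation}. Your identification of the $O(1/\sqrt{Nt^3})$ error with the cubic terms bounded by \eqref{eq:trace2}, and its improvement to $O(\log N/\sqrt{Nt^2})$ under \eqref{eq:A3}, is correct.
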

{The appearance of $A^{(j)},\,j=1,...,k-1$ in the right-hand sides above can be anticipated by observing that Cramer's rule implies that
\begin{align*}
    \frac{\det(\mc{H}^{(j+1)}_{z}-i\eta)}{\det(\mc{H}^{(j)}_{z}-i\eta)}&=\det V_{j}^{*}G^{(j)}_{z}(\eta)V_{j}.
\end{align*}}
We note that a result analogous to \eqref{eq:Fasymptotic} for the asymptotics of the expectation of a product of characteristic polynomials of a Ginibre matrix perturbed by a fixed matrix of finite rank was obtained by Liu and Zhang in Theorem 3.1 of \cite{liu_phase_2022}

Replacing $F$ and $\wt{K}_{i}$ by their asymptotic approximations, we see that the correlation function reduces to
\begin{align*}
    \wt{\rho}^{\left(k\right)}_{N}\left(\mbf{z};A\right)&=\left[1+O\left(\frac{1}{\sqrt{Nt^{3}}}\right)\right]\rho^{\left(k\right)}_{GinUE}\left(\mbf{z}\right)\\
    &\int_{S^{N-1}\times\cdots S^{N-k}}\left[\prod_{i=1}^{k}\left(\frac{\eta^{2}_{z,t}|\det V_{i}^{*}G^{\left(i-1\right)}_{z}V_{i}|}{t^{2}\gamma_{z,t}\sigma_{z,t}}\right)^{i}\right]d\nu_{k}(\mbf{v}_{k})\cdots d\nu_{1}(\mbf{v}_{1}).
\end{align*}
Therefore the theorem will follow immediately from
\begin{lemma}\label{lem:nuExpectation}
For $i=1,...,k$ we have
\begin{align}
    \left(\frac{\eta^{2}_{z,t}}{t^{2}\gamma_{z,t}\sigma_{z,t}}\right)^{i}\int_{S^{N-i}}|\det V_{i}^{*}G^{\left(i-1\right)}_{z}V_{i}|^{i}d\nu_{i}\left(\mbf{v}_{i}\right)&=1+O\left(\frac{1}{\sqrt{Nt^{3}}}\right),\label{eq:qformexpectation}
\end{align}
uniformly in $\mbf{v}_{j},\,j=1,...,i-1$. If \eqref{eq:A3} holds, then the error becomes $O\left(\frac{\log N}{\sqrt{Nt^{2}}}\right)$.
\end{lemma}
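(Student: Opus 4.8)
The plan is to use \Cref{lem:sphericalint} to turn the average over $\mbf{v}_i$ into a ratio of one-dimensional contour integrals, evaluate each by steepest descent, and read off the constant by a Wick expansion; this is the machinery already used for the asymptotics of $\wt{K}_i$, with one extra polynomial insertion. Fix $i$ and $\mbf{v}_1,\dots,\mbf{v}_{i-1}$, and set $n=N-i+1$, $s=N/t$, $\eta=\eta_{z,t}$, $A'=A^{(i-1)}$, $H=H^{(i-1)}_z(\eta)$, $\wt{H}=\wt{H}^{(i-1)}_z(\eta)$, $w_i=z+z_i/\sqrt{N\sigma_{z,t}}$ (so $|w_i-z|\le C/\sqrt{N}$), $Q=(A'-w_i)^{*}(A'-w_i)$. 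From the block form of $G_z$, the $2\times 2$ matrix $V_i^{*}G^{(i-1)}_zV_i$ has diagonal entries $i\eta\,\mbf{v}_i^{*}H\mbf{v}_i$, $i\eta\,\mbf{v}_i^{*}\wt{H}\mbf{v}_i$ and off-diagonal entries $\mbf{v}_i^{*}H(A'-z)\mbf{v}_i$ and its conjugate, so its determinant is real and $\le0$, whence $|\det V_i^{*}G^{(i-1)}_zV_i|=\eta^{2}(\mbf{v}_i^{*}H\mbf{v}_i)(\mbf{v}_i^{*}\wt{H}\mbf{v}_i)+|\mbf{v}_i^{*}H(A'-z)\mbf{v}_i|^{2}$; let $P(\mbf{u},\bar{\mbf{u}})$ be the $i$-th power of its homogeneous extension (bidegree $(2i,2i)$ in $(\mbf{u},\bar{\mbf{u}})$). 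Applying \Cref{lem:sphericalint} with $k=1$ to the numerator and denominator of $\mbb{E}_{\nu_i}[\,\cdot\,]$ and doing the resulting Gaussian integrals over $\mbb{C}^{n}$ yields
\[
\mbb{E}_{\nu_i}\!\big[\,|\det V_i^{*}G^{(i-1)}_zV_i|^{i}\,\big]=\left(\int_{\mbb{R}}\frac{e^{ix}\,\mbb{E}_{\mbf{g}_x}[P(\mbf{g}_x,\bar{\mbf{g}}_x)]}{\det(sQ+ix)}\,dx\right)\Big/\left(\int_{\mbb{R}}\frac{e^{ix}}{\det(sQ+ix)}\,dx\right),
\]
where $\mbb{E}_{\mbf{g}_x}$ is the complex-Gaussian (Wick) average with propagator $(sQ+ix)^{-1}$; the hypotheses of \Cref{lem:sphericalint} hold because $sQ>0$ and $P$ grows polynomially.

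Next I would evaluate both integrals by steepest descent. Each integrand is $e^{n\Phi(x)}$ times a slowly varying factor with $\Phi(x)=ix/n-\langle\log(sQ+ix)\rangle$; since $Q\ge0$, all singularities lie on the positive imaginary axis, so the real line may be pushed down to the steepest-descent contour through the saddle $x^{*}$ given by $\tr(sQ+ix^{*})^{-1}=1$. Writing $ix^{*}=s\eta_{*}^{2}$ this is $t\,\langle H^{(i-1)}_{w_i}(\eta_{*})\rangle=N/n$; together with $t\langle H_z(\eta_{z,t})\rangle=1$, the monotonicity $|\partial_\eta\langle H\rangle|\asymp\eta^{-2}$, and the transfer of the local laws from $A$ to $A^{(i-1)}$ (uniform in $\mbf{v}_1,\dots,\mbf{v}_{i-1}$; this is the step using $Nt^{2}\gg1$), this gives $\eta_{*}=\eta_{z,t}(1+o(1))$. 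Laplace's method --- reusing the bounds on $\Phi$, its derivatives, and the contour tails from the $\wt{K}_i$ argument, the new point being that $\mbb{E}_{\mbf{g}_x}[P]$ varies only by a factor $1+O(1/\sqrt{Nt})$ across the Gaussian peak --- then collapses the ratio to $\mbb{E}_{\mbf{g}_{x^{*}}}[P]\,(1+o(1))$, with propagator $R:=(sQ+ix^{*})^{-1}=s^{-1}\wt{H}^{(i-1)}_{w_i}(\eta_{*})$.

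Then I would compute the constant by Wick's theorem. With $P_0=\eta^{2}(\mbf{g}^{*}H\mbf{g})(\mbf{g}^{*}\wt{H}\mbf{g})+|\mbf{g}^{*}H(A'-z)\mbf{g}|^{2}$, the fully disconnected contractions in $\mbb{E}_{\mbf{g}_{x^{*}}}[P_0^{i}]$ give $(\mbb{E}[P_0])^{i}$ with $\mbb{E}[P_0]=\eta^{2}\tr(HR)\tr(\wt{H}R)+|\tr(H(A'-z)R)|^{2}+(\text{connected})$. Using $R=s^{-1}\wt{H}^{(i-1)}_{w_i}(\eta_{*})$ (and replacing $w_i$ by $z$, $\eta_{*}$ by $\eta$), the push-through identities $H(A'-z)=(A'-z)\wt{H}$ (so $\tr(\wt{H}H(A'-z))=\tr(H^{2}(A'-z))$ and $\tr(\wt{H}^{2})=\tr(H^{2})$), and the definitions \eqref{eq:alpha}--\eqref{eq:gamma}, the leading term is
\[
\Big(\tfrac{n^{2}}{s^{2}\eta^{2}}\big(\alpha^{(i-1)}_z(\eta)\gamma^{(i-1)}_z(\eta)+|\beta^{(i-1)}_z(\eta)|^{2}\big)\Big)^{i}=\Big(\tfrac{n^{2}}{s^{2}\eta^{2}}\,\gamma^{(i-1)}_z(\eta)\,\sigma^{(i-1)}_z(\eta)\Big)^{i},
\]
using $\alpha\gamma+|\beta|^{2}=\gamma\sigma$; since $n^{2}/s^{2}=t^{2}(1+O(1/N))$ and, by the transfer, $\gamma^{(i-1)}_z(\eta)\sigma^{(i-1)}_z(\eta)=\gamma_{z,t}\sigma_{z,t}(1+o(1))$, this equals $(t^{2}\gamma_{z,t}\sigma_{z,t}/\eta_{z,t}^{2})^{i}(1+o(1))$, which is exactly the reciprocal of the prefactor in \eqref{eq:qformexpectation}.

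It remains to size the errors. The connected contractions produce traces of at most five $H,\wt{H}$-type factors, each $O(n/\eta^{5})$ by $\|H\|\le\eta^{-2}$ and \eqref{eq:A_gamma}, hence a relative error $O(1/N\eta^{2})=O(1/Nt^{2})$; the Laplace remainder is $O(1/Nt)$; both are negligible. The dominant error comes from the replacements $w_i\to z$, $\eta_{*}\to\eta_{z,t}$ and above all from transferring the local laws to the random minor $A^{(i-1)}$: using \eqref{eq:A2} and $|w_i-z|\le C/\sqrt N$ this is $O(1/\sqrt{Nt^{2}})$, up to a logarithm, when \eqref{eq:A3} holds, while without \eqref{eq:A3} one must bound $|\langle G_zB_1G_zB_2G_zB_3\rangle|$ by $C\eta^{-3/2}$ from \eqref{eq:trace2} rather than by the $C\eta^{-1}$ of \eqref{eq:A3}, which costs an extra $\eta^{-1/2}\asymp t^{-1/2}$ and yields $O(1/\sqrt{Nt^{3}})$. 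Uniformity in $\mbf{v}_1,\dots,\mbf{v}_{i-1}$ follows from that of the transfer. The main obstacle, I expect, is precisely this interface: controlling the $N$- and $\mbf{v}_j$-dependent prefactor $\mbb{E}_{\mbf{g}_x}[P]$ (and the exponential tails) uniformly along the whole deformed contour, not just near the saddle, and combining this with the transfer estimates so that the final error is genuinely uniform in $\mbf{v}_1,\dots,\mbf{v}_{i-1}$.
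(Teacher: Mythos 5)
Your outline is correct, but the route is genuinely different from the paper's. You apply \Cref{lem:sphericalint} to both numerator and denominator of $\mbb{E}_{\nu_i}\bigl[|\det V_i^{*}G^{(i-1)}_zV_i|^{i}\bigr]$, push the contour to the saddle, and read off the constant $(\alpha\gamma+|\beta|^{2})^{i}=(\gamma\sigma)^{i}$ by a Wick expansion of the polynomial insertion. The paper instead proves \emph{concentration} of the three scalar quadratic forms $\mbf{v}^{*}H\mbf{v}$, $\mbf{v}^{*}\wt{H}\mbf{v}$, $\mbf{v}^{*}H(A'-w_i)\mbf{v}$ around $\frac{t}{\eta^{2}}\wt\alpha$, $\frac{t}{\eta^{2}}\wt\gamma$, $\frac{t}{\eta^{2}}\cdot\eta\wt\beta$, by bounding the moment generating function $m_j(r;B)$ (itself computed by exactly the same application of \Cref{lem:sphericalint}) and invoking Markov's inequality; once the quadratic forms are pinned down to $O(\log N/\sqrt{Nt^{2}})$ precision, $|\det V_i^{*}G^{(i-1)}V_i|^{i}$ is a constant up to small error on a set of $\nu_i$-measure $1-e^{-c\log^{2}N}$, and the remainder is killed by the trivial deterministic bound $|\det V_i^{*}G^{(i-1)}V_i|\le C/t^{2}$. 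The concentration route therefore never has to control the integrand away from the saddle, whereas your Laplace argument must bound the $N$- and $\mbf{v}_j$-dependent ratio $\mbb{E}_{\mbf{g}_x}[P]$ uniformly along the whole deformed contour --- exactly the obstacle you flag, and the one the MGF approach quietly sidesteps. In exchange, your computation produces the constant $\gamma\sigma$ in one shot, rather than via three separate concentration statements followed by the algebraic identity $\wt\alpha\wt\gamma+|\wt\beta|^{2}=\wt\gamma\wt\sigma$. The dominant error in both arguments comes from the same place --- transferring $\alpha,\beta,\gamma$ from $(A,z,\eta_{z,t})$ to $(A^{(i-1)},w_i,\eta_*)$, with the three-resolvent trace $\langle G B_1 G B_2 G B_3\rangle$ deciding $1/\sqrt{Nt^{3}}$ versus $1/\sqrt{Nt^{2}}$ --- so your sizing of the error matches the paper.

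Two small points to fix. First, with $Q=(A'-w_i)^{*}(A'-w_i)$ alone the Gaussian integral over $\mbb{C}^{n}$ defining $\hat f$ need not converge, since $Q$ may be singular; you must regularize by multiplying by $e^{s\eta^{2}}$ exactly as in the paper's derivation of \eqref{eq:Kformula}, shifting $Q\to Q+\eta^{2}$ and placing the poles of $\det^{-1}(s(Q+\eta^{2})+ix)$ strictly above the real axis. Second, the factorization should be stated as $\mbb{E}[P_0^{i}]=\bigl(\eta^{2}\tr(HR)\tr(\wt H R)+|\tr(H(A'-z)R)|^{2}\bigr)^{i}+(\text{contractions joining distinct quadratic forms})$, i.e.\ the leading term is the $i$-th power of the \emph{disconnected} part of $\mbb{E}[P_0]$, not of $\mbb{E}[P_0]$ itself; this does not change the conclusion but matters when you size the corrections.
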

We will prove \eqref{eq:Fasymptotic} in \Cref{sec:F} and \eqref{eq:Kasymptotic} and \eqref{eq:qformexpectation} in \Cref{sec:K}.

\section{Expectation of Characteristic Polynomials}\label{sec:F}
Using Berezin integration, one can reduce the expectation of a product of $k$ characteristic polynomials with respect to Gaussian matrices to an integral over $k\times k$ complex matrices. This is an example of so called duality formulas; this particular case can be found in eq. (39) of \cite{grela_diffusion_2016} and Theorem 2.4 of \cite{liu_phase_2022}. A special case when $A^{(k)}=0$ and the $z_{j}$ take two distinct values is given in eq. (24) of \cite{nishigaki_replica_2002}. {A general reference for Grassmann integration in random matrix theory and beyond is Wegner's book \cite{wegner_supermathematics_2016}.}
\begin{lemma}
Let $\wt{F}$ be as defined in \eqref{eq:Ftilde}. Then
\begin{align}
    \wt{F}\left(\mbf{z};A^{\left(k\right)}\right)&=f_{N,k}|\Delta\left(\mbf{z}\right)|^{2}\int_{\mbf{M}_{k}}e^{-\frac{N}{t}\tr X^{*}X}\det\left[i M^{\left(k\right)}(X)\right] dX,\label{eq:Ftilde2}
\end{align}
where
\begin{align}
     M^{\left(k\right)}(X)&=\begin{pmatrix}-iX\otimes1_{N-k}&1_{k}\otimes A^{\left(k\right)}-\mbf{w}\otimes1_{N-k}\\1_{k}\otimes A^{\left(k\right)*}-\bar{\mbf{w}}\otimes1_{N-k}&-iX^{*}\otimes1_{N-k}\end{pmatrix},
\end{align}
and
\begin{align}
    f_{N,k}&=\frac{N^{k^{2}/2}}{2^{k/2}\pi^{k\left(k+3/2\right)}t^{k\left(k-1/2\right)}\sigma^{k(k+1)/2}_{z,t}}.\label{eq:fNk}
\end{align}
\end{lemma}
\begin{proof}
{Let $\chi_{j,l},\,\chi_{j,l}^{*},\,\psi_{j,l},\,\psi_{j,l}^{*}$, $j=1,...,k,\,l=1,...,N-k$ be mutually anti-commuting variables, i.e. satisfy the relations
\begin{align*}
    \chi_{j,l}\chi^{*}_{k,m}&=-\chi^{*}_{k,m}\chi_{j,l},\quad\chi_{j,l}\psi_{k,m}=-\psi_{k,m}\chi_{j,l},\quad\psi_{j,l}\psi^{*}_{k,m}=-\psi^{*}_{k,m}\psi_{j,l}.
\end{align*}
The superscript $*$ is merely notational: $\chi^{*}_{j,l}$ and $\chi_{j,l}$ are two independent anti-commuting variables. Let $\chi_{j}=(\chi_{j,1},...,\chi_{j,N-k})^{T},\,\psi_{j}=(\psi_{j,1},...,\psi_{j,N-k})^{T},\,j=1,...,k$ be vectors constructed from these variables}; then we have
\begin{align*}
    \left|\det\left(A^{\left(k\right)}+{\sqrt{t}}B^{(k)}-w_{j}\right)\right|^{2}&=\int e^{-\chi_{j}^{*}(A^{\left(k\right)}+{\sqrt{t}}B^{(k)}-w_{j})\chi_{j}-\psi_{j}^{*}(A^{\left(k\right)*}+{\sqrt{t}}B^{(k)*}-\bar{w}_{j})\psi_{j}}d\chi d\psi.
\end{align*}
The integral over $B^{(k)}$ is now a Gaussian integral:
\begin{align*}
    &\left(\frac{N}{\pi t}\right)^{\left(N-k\right)^{2}}\int_{ \mbf{M}_{N-k}}e^{-{N}\tr B^{(k)*}B^{(k)}-{\sqrt{t}}\tr B^{(k)}\sum_{j=1}^{k}\chi_{j}\chi_{j}^{*}-{\sqrt{t}}\tr B^{(k)*}\sum_{j=1}^{k}\psi_{j}\psi_{j}^{*}}dB^{(k)}\\&=e^{-\frac{t}{N}\sum_{j,l=1}^{k}\left(\chi_{j}^{*}\psi_{l}\right)(\psi_{l}^{*}\chi_{j})}
\end{align*}
The quartic term can be rewritten in terms of quadratic forms by introducing an auxilliary integral over $\mbf{M}_{k}$:
\begin{align*}
    e^{-\frac{t}{N}\sum_{j,l=1}^{k}\left(\chi_{j}^{*}\psi_{l}\right)(\psi_{l}^{*}\chi_{j})}&=\left(\frac{N}{\pi t}\right)^{k^{2}}\int_{\mbf{M}_{k}}e^{-\frac{N}{t}\tr X^{*}X-i\sum_{j,l=1}^{k}\left(X_{jl}\chi_{j}^{*}\psi_{l}+\bar{X}_{jl}\psi_{l}^{*}\chi_{j}\right)}dX.
\end{align*}
Now that we have only quadratic forms in $\chi_{j}$ and $\psi_{j}$, we can evaluate the integral over these vectors:
\begin{align*}
    &\int \exp\left\{-\begin{pmatrix}\chi^{*}&\psi^{*}\end{pmatrix}\begin{pmatrix}\mbf{w}\otimes1_{N-k}-1_{k}\otimes A^{\left(k\right)}&i X\otimes1_{N-k}\\iX^{*}\otimes1_{N-k}&\bar{\mbf{w}}\otimes1_{N-k}-1_{k}\otimes A^{\left(k\right)*}\end{pmatrix}\begin{pmatrix}\chi&\psi\end{pmatrix}\right\}d\chi d\psi\\
    &=\left(-1\right)^{N}\det\begin{pmatrix}\mbf{w}\otimes1_{N-k}-1_{k}\otimes A^{\left(k\right)}&i X\otimes1_{N-k}\\iX^{*}\otimes1_{N-k}&\bar{\mbf{w}}\otimes1_{N-k}-1_{k}\otimes A^{\left(k\right)*}\end{pmatrix}\\
    &=\det\left[i M^{\left(k\right)}(X)\right].
\end{align*}
\end{proof}

The benefit of this is that $N$ now appears as a parameter in the integrand and we can apply Laplace's method to obtain the asymptotics of $\wt{F}$. {Define the domain
\begin{align}
    \Omega&:=\left\{|s_{j}(X)-\eta_{z,t}|<\sqrt{\frac{t}{N}}\log N,\,j=1,...,k\right\},\label{eq:targetDomain}
\end{align}
where $s_{j}(X)$ are the singular values of $X$. The first step is to restrict the integral in \eqref{eq:Ftilde2} to $\Omega$.}
{\begin{lemma}
There is an absolute constant $c>0$ such that
\begin{align}
    \left|f_{N,k}|\Delta(\mbf{z})|^{2}\int_{\mbf{M}_{k}\setminus\Omega}e^{-\frac{N}{t}\tr X^{*}X}\det\left[iM^{(k)}(X)\right]dX\right|&\leq e^{-c\log^{2}N}
\end{align}
for sufficiently large $N$.
\end{lemma}}
\begin{proof}
Let $P=XX^{*}$ and $Q=X^{*}X$. {Instead of working directly with $\Omega$, we will consider a smaller domain
\begin{align}
    \Omega'&:=\Omega_{1}\cap\Omega_{2},
\end{align}
where
\begin{align}
    \Omega_{1}&:=\left\{|P_{ii}-\eta_{z,t}|<\sqrt{\frac{t^{3}}{N}}\log N,\,|Q_{ii}-\eta_{z,t}|<\sqrt{\frac{t^{3}}{N}}\log N,\,i=1,...,k\right\},\label{eq:Omega1}
\end{align}
and
\begin{align}
    \Omega_{2}&:=\left\{|P_{ij}|<\sqrt{\frac{t^{3}}{N}}\log N,\,1\leq i\neq j\leq k\right\}.\label{eq:Omega2}
\end{align}
The fact that $\Omega'\subset\Omega$ follows by Gershgorin's theorem (strictly speaking, for this inclusion we should modify the bound in the definition of $\Omega$ to be $C\sqrt{\frac{t}{N}}\log N$ for some absolute constant $C$, but this is irrelevant). To prove the lemma it therefore suffices to show that
\begin{align}
    f_{N,k}|\Delta(\mbf{z})|^{2}\int_{\mbf{M}_{k}\setminus\Omega'}e^{-\frac{N}{t}\tr X^{*}X}\left|\det\left[iM^{(k)}(X)\right]\right|dX&\leq e^{-c\log^{2}N}.\label{eq:MminusOmega'}
\end{align}
To prove this we will first show that the integral over $\mbf{M}_{k}\setminus\Omega_{1}$ is small, and then show that the integral over $\Omega_{1}\setminus\Omega_{2}$ is small.}

A first application of Fischer's inequality to $\det M^{(k)}(X)$ gives
\begin{align}
    |\det M^{\left(k\right)}(X)|&\leq\det^{1/2}\left(P\otimes1+|1\otimes A^{\left(k\right)*}-\bar{\mbf{w}}\otimes1|^{2}\right)\nonumber\\
    &\times\det^{1/2}\left(Q\otimes1+|1\otimes A^{\left(k\right)}-\mbf{w}\otimes1|^{2}\right).\label{eq:fischer1}
\end{align}
Applying Fischer's inequality again to each factor on the right hand above side we find
\begin{align}
    |\det M^{\left(k\right)}(X)|&\leq\prod_{i=1}^{k}\det^{1/2}(P_{ii}+|A^{\left(k\right)}-w_{i}|^{2})\det^{1/2}(Q_{ii}+|A^{\left(k\right)}-w_{i}|^{2}).\label{eq:fischer2}
\end{align}
Thus we have the following bound for the integrand in \eqref{eq:Ftilde2}:
\begin{align*}
    \prod_{i=1}^{k}e^{-\frac{N}{2t}(P_{ii}+Q_{ii})}\det^{1/2}\left(P_{ii}+|A^{(k)}-w_{i}|^{2}\right)\det^{1/2}\left(Q_{ii}+|A^{(k)}-w_{i}|^{2}\right).
\end{align*}
Now we multiply the above expression by $e^{Nk\phi_{z,t}}$, where $\phi_{z,t}$ was defined in \eqref{eq:phi}. Note that
\begin{align*}
    N\phi_{z,t}&=\eta_{z,t}^{2}\tr H_{z}-\log\det(\eta_{z,t}^{2}+|A-z|^{2}),
\end{align*}
so we are dividing by $\det^{k}(\eta_{z,t}^{2}+|A-z|^{2})$ and multiplying by $e^{k\eta_{z,t}^{2}\tr H_{z}}$. If we take out a factor of $\det\left(\eta^{2}_{z,t}+|A^{\left(k\right)}-w_{i}|^{2}\right)$ from each determinant in \eqref{eq:fischer2} we obtain
\begin{align*}
    \frac{\det\left(P_{ii}+|A^{(k)}-w_{i}|^{2}\right)}{\det\left(\eta_{z,t}^{2}+|A^{(k)}-w_{i}|^{2}\right)}&=\det\left(1+(P_{ii}-\eta_{z,t}^{2})H^{(k)}_{w_{i}}\right)\\
    &\leq\exp\left\{(P_{ii}-\eta_{z,t}^{2})\tr H^{(k)}_{w_{i}}-\frac{3(P_{ii}-\eta_{z,t}^{2}){^{2}}}{6+4|P_{ii}-\eta_{z,t}^{2}|/\eta_{z,t}^{2}}\tr\left((H^{(k)}_{w_{i}})^{2}\right)\right\},
\end{align*}
where we have used the inequality $\log(1+x)\leq x-\frac{3x^{2}}{6+4x}$. {Since $\Tr{H_{z}}=1/t$ by the definition of $\eta_{z,t}$, we have
\begin{align*}
    \exp\left\{-\frac{N}{2t}P_{ii}+\frac{N}{2}\eta^{2}_{z,t}\Tr{H_{z}}\right\}&=\exp\left\{-\frac{N}{2}(P_{ii}-\eta^{2}_{z,t})\Tr{H_{z}}\right\}.
\end{align*}}
This gives us the bound
\begin{align}
    e^{Nk\phi_{z,t}}|\wt{F}|&\leq |\Delta(\mbf{z})|^{2}\left(\prod_{i=1}^{k}\frac{\det\left(\eta^{2}_{z,t}+|A^{\left(k\right)}-w_{i}|^{2}\right)}{\det\left(\eta^{2}_{z,t}+|A-z|^{2}\right)}\right)\nonumber\\
    &\times f_{N,k}\int_{\mbf{M}_{k}}\exp\left\{-\frac{Nt}{2}\sum_{i=1}^{k}\left[h_{i}(|p_{i}|/\eta^{2}_{z,t})+h_{i}(|q_{i}|/\eta^{2}_{z,t})\right]\right\}dX,\label{eq:Fbound1}
\end{align}
where
\begin{align*}
    h_{i}(x)&=\frac{\eta^{4}_{z,t}}{t}\left\langle (H^{\left(k\right)}_{w_{i}})^{2}\right\rangle\frac{3x^{2}}{6+4x}-\frac{\eta^{2}_{z,t}}{t}{\Big|}\left\langle H^{\left(k\right)}_{w_{i}}\right\rangle-\left\langle H_{z}\right\rangle{\Big|}x,
\end{align*}
and $p_{i}=P_{ii}-\eta^{2}_{z,t}$, $q_{i}=Q_{ii}-\eta^{2}_{z,t}$.

For the product of ratios of determinants in front of the integral, we use Cramer's rule repeatedly to replace $A^{(i)}$ with $A^{(i-1)}$:
\begin{align}
    \frac{\det\left(\eta^{2}_{z,t}+|A^{(k)}-w_{i}|^{2}\right)}{\det\left(\eta^{2}_{z,t}+|A-z|^{2}\right)}&=\det\left(1_{2N}-\frac{1}{\sqrt{N\sigma_{z,t}}}G_{z}Z_{i}\right)\prod_{i=1}^{k}\left|\det V_{i}^{*}G^{(i-1)}_{w_{i}}V_{i}\right|,\label{eq:prefactor}
\end{align}
where we recall that $V_{i}=1_{2}\otimes\mbf{v}_{i}$. {We can truncate the Taylor series for the first factor after some large but finite $n$ terms using the fact that $\frac{1}{\sqrt{N\sigma_{z,t}}}\|G_{z}Z_{i}\|\leq CN^{-1/2}t^{-1}\leq N^{-\epsilon}$ (this also uses $\sigma_{z,t}>c>0$ which follows from \eqref{eq:A_alpha}):}
\begin{align*}
    \det\left(1-\frac{1}{\sqrt{N\sigma_{z,t}}}G_{z}Z_{i}\right)&=\exp\left\{-\sum_{m=1}^{n-1}\frac{1}{m(N\sigma_{z,t})^{m/2}}\tr\left((G_{z}Z_{i})^{m}\right)+O\left(\frac{1}{N^{n/2-1}t^{n}}\right)\right\}.
\end{align*}
Now we use \eqref{eq:trace2} to estimate each term in the sum:
\begin{align*}
    \det\left(1-\frac{1}{\sqrt{N\sigma_{z,t}}}G_{z}Z_{i}\right)&=\exp\left\{-\sqrt{\frac{N}{\sigma_{z,t}}}\left\langle G_{z}Z_{i}\right\rangle-\frac{1}{2\sigma_{z,t}}\left\langle(G_{z}Z_{i})^{2}\right\rangle+O\left(\frac{1}{\sqrt{Nt^{3}}}\right)\right\}\\
    &=O(\psi_{i}),
\end{align*}
where $\psi_{i}$ is defined in \eqref{eq:psi}. For the second equality we have used the fact that $|z_{i}|<C$ and $|\tau_{z,t}|<C$.

Each factor in the product on the right hand side of \eqref{eq:prefactor} can be estimated using \Cref{lem:detRatio}. {In the notation of \Cref{lem:detRatio}, with $V=V_{i},\,W^{-1}=-G^{(i-1)}_{w_{i}}$ and $Y^{-1}=-G^{(i-1)}_{z}$, we have
\begin{align*}
    \rho&=\|(\Im Y)^{-1}\|\cdot\|Y(Y^{-1}-W^{-1})Y\|\\
    &\leq\frac{1}{\eta_{z,t}}\|(G^{(i-1)}_{z})^{-1}(G^{(i-1)}_{z}-G^{(i-1)}_{w_{i}})(G^{(i-1)}_{z})^{-1}\|\\
    &\leq\frac{C}{\sqrt{Nt^{2}}},
\end{align*}
where we used the resolvent identity and the fact that $|z-w_{i}|=|z_{i}|/\sqrt{N\sigma_{z,t}}<CN^{-1/2}$. Thus by \eqref{eq:detRatio} we find
\begin{align}
    \frac{\det V_{i}^{*}G^{(i-1)}_{w_{i}}V_{i}}{\det V_{i}^{*}G^{(i-1)}_{z}V_{i}}&=1+O\left(\frac{1}{\sqrt{Nt^{2}}}\right).\label{eq:detRatio1}
\end{align}
}
We find the following bound for the product in \eqref{eq:Fbound1}:
\begin{align*}
    \prod_{i=1}^{k}\frac{\det\left(\eta^{2}_{z,t}+|A^{(k)}-w_{i}|^{2}\right)}{\det\left(\eta^{2}_{z,t}+|A-z|^{2}\right)}&\leq C\prod_{i=1}^{k}\left|\det V_{i}^{*}G^{(i-1)}_{z}V_{i}\right|^{k}\psi_{i}.
\end{align*}
If we move the factor $e^{Nk\phi_{z,t}}$ to the right hand side of \eqref{eq:Fbound1} we obtain
\begin{align*}
    |\wt{F}|&\leq C|\Delta(\mbf{z})|^{2}\left(\prod_{i=1}^{k}\left|\det V_{i}^{*}G^{(i-1)}_{z}V_{i}\right|^{k}\psi_{i}e^{-N\phi_{z,t}}\right)\\
    &\times f_{N,k}\int_{\mbf{M}_{k}}\exp\left\{-\frac{Nt}{2}\sum_{i=1}^{k}\left[h_{i}(|p_{i}|/\eta_{z,t}^{2})+h_{i}(|q_{i}|/\eta_{z,t}^{2})\right]\right\}dX.
\end{align*}
By interlacing of singular values, the resolvent identity and \eqref{eq:A2} we have
\begin{align*}
    \left\langle H^{\left(k\right)}_{w_{i}}\right\rangle-\left\langle H_{z}\right\rangle&=\frac{1}{2\eta_{z,t}}\Im\left\langle G_{w_{i}}-G_{z}\right\rangle+O\left(\frac{1}{Nt^{2}}\right)\\
    &=\frac{1}{2\eta_{z,t}}\Im\left(\frac{1}{\sqrt{N\sigma_{z,t}}}\left\langle G^{2}_{z}Z_{i}\right\rangle+\frac{1}{N\sigma_{z,t}}\left\langle(G_{z}Z_{i})^{2}G_{w_{i}}\right\rangle\right)+O\left(\frac{1}{Nt^{2}}\right)\\
    &=O\left(\frac{1}{Nt^{5/2}}\right).
\end{align*}
Likewise, by interlacing and \eqref{eq:A_gamma},
\begin{align*}
    \left\langle(H^{\left(k\right)}_{w_{i}})^{2}\right\rangle&=\left\langle H^{2}_{w_{i}}\right\rangle+O\left(\frac{1}{Nt^{4}}\right)\\
    &\geq\frac{c}{t^{3}}.
\end{align*}
Hence we obtain
\begin{align*}
    h_{i}\left(x\right)&\geq cx\left(\frac{x}{1+x}-\frac{1}{Nt^{3/2}}\right).
\end{align*}
In $\mbf{M}_{k}\setminus\Omega_{1}$, we have $x>\frac{\log N}{\sqrt{Nt}}$, which implies that $h_{i}\left(x\right)>\frac{\log^{2}N}{Nt}$ and the integrand is $O(e^{-c\log^{2}N})$. Since $f_{N,k}$ defined in \eqref{eq:fNk} is $O(N^{C})$ for some $C$, we conclude that
\begin{align}
    f_{N,k}|\Delta(\mbf{z})|^{2}\int_{\mbf{M}_{k}\setminus\Omega_{1}}e^{-\frac{N}{t}\tr|X|^{2}}\left|\det\left[iM^{(k)}(X)\right]\right|dX&\leq e^{-c\log^{2}N}.\label{eq:MminusOmega1}
\end{align}

Now we want to show that the integral over $\Omega_{1}\setminus\Omega_{2}$ is small and so we consider $X\in\Omega_{1}$. Returning to \eqref{eq:fischer1}, instead of bounding each determinant by the product of determinants of the diagonal blocks, we take a product of $k-2$ diagonal blocks and a $2\times 2$ block, i.e. we use the following special case of Fischer's inequality for a non-negative block matrix $B$:
\begin{align*}
    \det\begin{pmatrix}B_{11}&\cdots&B_{1k}\\\vdots&\ddots&\vdots\\B^{*}_{1k}&\cdots&B_{kk}\end{pmatrix}&\leq\det\begin{pmatrix}B_{ii}&B_{ij}\\B^{*}_{ij}&B_{jj}\end{pmatrix}\prod_{l\neq i,j}\det B_{ll}\\
    &=\det\left(1-B_{ii}^{-1}B_{ij}B_{jj}^{-1}B_{ij}^{*}\right)\prod_{i}\det B_{ii}.
\end{align*}
In our case we have {$B_{ii}=P_{ii}+|A^{(k)}-w_{i}|^{2}$ and $B_{ij}=P_{ij}1_{N-k}$}, leading to
\begin{align*}
    \frac{\det\left(XX^{*}\otimes1+|1\otimes A^{\left(k\right)}-\mbf{w}\otimes1|^{2}\right)}{\prod_{i=1}^{k}\det\left(P_{ii}+|A^{\left(k\right)}-w_{i}|^{2}\right)}&=\det\left(1_{N-k}-|P_{ij}|^{2}H^{\left(k\right)}_{w_{i}}\left(P_{ii}\right)H^{(k)}_{w_{j}}\left(P_{jj}\right)\right).
\end{align*}
In terms of $G_{w_{i}}$ and $G_{w_{j}}$ we have
\begin{align*}
    \langle H^{(k)}_{w_{i}}(P_{ii})H^{(k)}_{w_{j}}(P_{jj})\rangle&=-\frac{1}{P_{ii}P_{jj}}\langle G^{(k)}_{w_{i}}(P_{ii})E^{(k)}_{+}G^{(k)}_{w_{j}}(P_{jj})E^{(k)}_{+}\rangle,
\end{align*}
where $E_{+}$ was defined in \eqref{eq:rhoPlus}. {When $X\in\Omega_{1}$ we have} $P_{ii}>ct$ and $P_{jj}>ct$, so we can use \Cref{lem:minorresolvent} to replace $A^{(k)}$ with $A$:
\begin{align*}
    -\frac{1}{P_{ii}P_{jj}}\langle G^{(k)}_{w_{i}}(P_{ii})E^{(k)}_{+}G^{(k)}_{w_{j}}(P_{jj})E^{(k)}_{+}\rangle&=-\frac{1}{P_{ii}P_{jj}}\langle G_{w_{i}}(P_{ii})E_{+}G_{w_{j}}(P_{jj})E_{+}\rangle+O\left(\frac{1}{t^{4}}\right).
\end{align*}
By the resolvent identity,
\begin{align*}
    -\frac{1}{P_{ii}P_{jj}}\langle G_{w_{i}}(P_{ii})E_{+}G_{w_{j}}(P_{jj})E_{+}\rangle&=\langle H_{w_{i}}(P_{ii})H_{w_{i}}(P_{jj})\rangle\\&-\frac{1}{P_{ii}P_{jj}}\langle G_{w_{i}}(P_{ii})E_{+}G_{w_{i}}(P_{jj})W_{ij}G_{w_{j}}(P_{jj})E_{+}\rangle,
\end{align*}
where $W_{ij}=\begin{pmatrix}0&w_{j}-w_{i}\\\bar{w}_{j}-\bar{w}_{i}&0\end{pmatrix}$. The second term on the last line can be estimated by Cauchy-Schwarz, \eqref{eq:A_gamma}, \eqref{eq:A2} and the bound $\|W_{ij}\|\leq C/\sqrt{N}$:
\begin{align}
    \left|\langle G_{w_{i}}(P_{ii})E_{+}G_{w_{i}}(P_{jj})W_{ij}G_{w_{j}}(P_{jj})E_{+}\rangle\right|&\leq\langle G_{w_{i}}(P_{ii})E_{+}G^{*}_{w_{i}}(P_{ii})E_{+}\rangle^{1/2}\nonumber\\&\times\left\langle\frac{\Im G_{w_{i}}(P_{jj})}{P_{jj}}W_{ij}\frac{\Im G_{w_{j}}(P_{jj})}{P_{jj}}W^{*}_{ij}\right\rangle^{1/2}\nonumber\\
    &\leq\frac{C}{N^{1/2}t^{3/2}}.\label{eq:error}
\end{align}
Since $|P_{ll}-\eta^{2}_{z,t}|<\sqrt{\frac{t^{3}}{N}}\log N$ for each $l=1,...,k$, we have
\begin{align*}
    \langle H_{w_{i}}(P_{ii})H_{w_{i}}(P_{jj})\rangle&\geq\frac{1}{1+|P_{jj}-P_{ii}|/\eta_{z,t}^{2}}\langle H^{2}_{w_{i}}(P_{ii})\rangle\\
    &\geq\frac{c}{t^{3}},
\end{align*}
where the last line follows by interlacing and \eqref{eq:A_gamma}. Combining this lower bound with the bound in \eqref{eq:error} we obtain
\begin{align*}
    \langle H^{(k)}_{w_{i}}(P_{ii})H^{(k)}_{w_{j}}(P_{jj})\rangle&\geq\frac{c}{t^{3}}.
\end{align*}
From this and the inequality $\det\left(1+B\right)\leq e^{\tr B}$ we observe that if $|P_{ij}|\geq\sqrt{\frac{t^{3}}{N}}\log N$ then
\begin{align*}
    \det\left(1_{N-k}-|P_{ij}|^{2}H^{(k)}_{w_{i}}(P_{ii})H^{(k)}_{w_{j}}(P_{jj})\right)&\leq e^{-c\log^{2}N}.
\end{align*}
We can repeat this for all pairs of indices $\left(i,j\right)$ to conclude that
\begin{align}
    f_{N,k}|\Delta(\mbf{z})|^{2}\int_{\Omega_{1}\setminus\Omega_{2}}e^{-\frac{N}{t}\tr|X|^{2}}\left|\det\left[iM^{(k)}(X)\right]\right|dX\leq e^{-c\log^{2}N}.\label{eq:Omega1minusOmega2}
\end{align}
Combining \eqref{eq:Omega1minusOmega2} and \eqref{eq:MminusOmega1}, we obtain \eqref{eq:MminusOmega'} and the lemma is proved.
\end{proof}

{Having restricted the integral to $\Omega$, we are now able to approximate the integrand by Taylor expansion.} {For $l=0,...,k$} define the $k\times k$ block matrices $\mbf{Z}^{(l)}$ with blocks
\begin{align*}
    Z^{(l)}_{ij}&=\begin{pmatrix}0&(U_{1}^{*}\mbf{z}U_{1})_{ij}1_{N-l}\\(U_{2}^{*}\bar{\mbf{z}}U_{2})_{ij}1_{N-l}&0\end{pmatrix},
\end{align*}
and the block resolvents $\mbf{F}^{\left(l\right)}_{z}\left(\mbf{s}\right)=\left(1_{k}\otimes H^{\left(l\right)}_{z}-i\mbf{s}\otimes1_{2(N-l)}-\frac{1}{\sqrt{N\sigma_{z,t}}}\mbf{Z}^{\left(l\right)}\right)^{-1}$ and $\mbf{G}\left(\mbf{s}\right)=(1_{k}\otimes H_{z}-i\mbf{s}\otimes1_{2N})^{-1}$. When $l=0$ we drop the superscript, i.e. $\mbf{Z}:=\mbf{Z}^{(0)}$ and so on. We also continue the convention of dropping the argument when $\mbf{s}=\eta_{z,t}1_{k}$, i.e. $\mbf{G}^{(j)}:=\mbf{G}^{(j)}(\eta_{z,t}1_{k})$ and $\mbf{F}^{(j)}:=\mbf{F}^{(j)}(\eta_{z,t}1_{k}))$. The following lemma gives the approximation of the integrand.
{\begin{lemma}
Let $X\in\Omega$ have singular value decomposition $U_{1}\mbf{s}U_{2}^{*}$; then we have
\begin{align}
    e^{-\frac{N}{t}\tr|X|^{2}}\det\left[iM^{(k)}(X)\right]&=\left[1+O\left(\frac{\log N}{\sqrt{Nt^{3}}}\right)\right]\left(\prod_{i=1}^{k}|\det V_{i}^{*}G^{(i-1)}_{z}V_{i}|^{k}\psi_{i}e^{-N\phi_{z,t}}\right)\nonumber\\&\times\exp\left\{\sum_{i=1}^{k}\left(\Re\left(\frac{\bar{\beta}^{2}_{z,t}z^{2}_{i}}{\gamma_{z,t}\sigma_{z,t}}\right)-|z_{i}|^{2}\right)+\frac{\alpha_{z,t}}{\sigma_{z,t}}\tr (U_{1}^{*}\mbf{z}U_{1}U_{2}^{*}\bar{\mbf{z}}U_{2})\right.\nonumber\\&\left.-2N\gamma_{z,t}\sum_{j=1}^{k}(s_{j}-\eta_{z,t})^{2}-i\sqrt{\frac{N}{\sigma_{z,t}}}\sum_{j=1}^{k}\Tr{G^{2}_{z}Z_{jj}}(s_{j}-\eta_{z,t})\right\},
\end{align}
uniformly in $\mbf{v}_{1},...,\mbf{v}_{k}$. If $\eqref{eq:A3}$ holds then the error is $O\left(\frac{\log N}{\sqrt{Nt^{2}}}\right)$.
\end{lemma}}
\begin{proof}
{In the following the constants implied by $O(\cdot)$ are uniformly bounded in $\mbf{v}_{1},...,\mbf{v}_{k}$.} By successive application of Cramer's rule to replace $A^{\left(i\right)}$ with $A^{\left(i-1\right)}$ we obtain
\begin{align}
    \det M^{\left(k\right)}(X)&=\det\left(1_{2kN}-\frac{1}{\sqrt{N\sigma_{z,t}}}\mbf{G}\left(\mbf{s}\right)\mbf{Z}\right)\prod_{j=1}^{k}\det\left(\mbf{V}_{j}^{*}\mbf{F}^{\left(j-1\right)}_{z}\left(\mbf{s}\right)\mbf{V}_{j}\right)\det\left(\mc{H}_{z}-is_{j}\right),\label{eq:detM}
\end{align}
where $\mbf{V}_{i}=1_{k}\otimes V_{i}$. We start by proving the claim
\begin{align}
    e^{-\frac{N}{t}s_{j}^{2}}\det(\mc{H}_{z}-is_{j})&=\left[1+O\left(\frac{\log N}{\sqrt{Nt}}\right)\right]e^{-\frac{N}{t}\eta_{z,t}^{2}}\det(\mc{H}_{z}-i\eta_{z,t})\cdot e^{-2N\gamma_{z,t}(s-\eta_{z,t})^{2}}.\label{eq:claim1}
\end{align}

Since $|s_{j}-\eta_{z,t}|<\sqrt{\frac{t}{N}}\log N$ in $\Omega$ and $\|G_{z}\|\leq\eta^{-1}_{z,t}$, we can use the integral representation of the logarithm to obtain
\begin{align*}
    \frac{\det\left(\mathcal{H}_{z}-is\right)}{\det\left(\mathcal{H}_{z}-i\eta_{z,t}\right)}&=\det\left[1_{2N}-i\left(s-\eta_{z,t}\right) G_{z}\left(\eta_{z,t}\right)\right]\\
    &=\exp\left\{\left(s-\eta_{z,t}\right)\Im\left\langle G_{z}\right\rangle+\frac{1}{2}\left(s-\eta_{z,t}\right)^{2}\Re\tr G_{z}^{2}\right.\\&\left.-\left(s-\eta_{z,t}\right)^{3}\int_{0}^{1}u^{2}\Im\tr G^{3}_{z}(1-iu\left(s-\eta_{z,t}\right) G_{z})^{-1}du\right\}.
\end{align*}
We can bound the last term as follows:
\begin{align*}
    |s-\eta_{z,t}|^{3}\left|\int_{0}^{1}u^{2}\Im\tr(1_{2N}-iu\left(s-\eta_{z,t}\right)G_{z})^{-1}G^{3}_{z}du\right|&\leq C\sqrt{\frac{t}{N}}\left\langle|G_{z}|^{2}\right\rangle\log^{3}N\\
    &\leq\frac{c\log^{3}N}{\sqrt{Nt}}.
\end{align*}
The first two terms, when combined with the factor $e^{-\frac{N}{t}\left(s^{2}-\eta^{2}_{z,t}\right)}$, give the exponent
\begin{align*}
    -\frac{N}{t}\left[s^{2}-\eta_{z,t}^{2}-2\eta_{z,t}\left(s-\eta_{z,t}\right)-\frac{t}{2}\left(s-\eta_{z,t}\right)^{2}\Re\left\langle G^{2}_{z}\right\rangle\right]&=-2N\gamma_{z,t}\left(s-\eta_{z,t}\right)^{2},
\end{align*}
which proves the claim \eqref{eq:claim1}.

{Now we come to the first factor in \eqref{eq:detM}.} Since $\|\mbf{G}\left(\mbf{s}\right)\|<C/t$ and $Nt^{2}=N^{-2\epsilon}$, we can find a finite $n$ such that $(Nt^{2})^{n/2}\gg N$. Then we can truncate the Taylor series expansion of $\det\left(1-N^{-1/2}\mbf{G}\left(\mbf{s}\right)\mbf{Z}\right)$ at finite order:
\begin{align*}
    \det\left(1_{2kN}-\frac{1}{\sqrt{N\sigma_{z,t}}}\mbf{G}\left(\mbf{s}\right)\mbf{Z}\right)&=\exp\left\{-\sum_{m=1}^{n-1}\frac{1}{m(N\sigma_{z,t})^{m/2}}\tr\left((\mbf{G}\left(\mbf{s}\right)\mbf{Z})^{m}\right)+O\left(\frac{1}{N^{n/2-1}t^{n}}\right)\right\}.
\end{align*}
By \eqref{eq:trace2}, we have
\begin{align*}
    \left|\frac{1}{(N\sigma_{z,t})^{m/2}}\tr\left((\mbf{G}\left(\mbf{s}\right)\mbf{Z})^{m}\right)\right|&\leq\frac{N}{(N\sigma_{z,t})^{m/2}}\sum\left|\left\langle G(s_{i_{1}})Z_{i_{1}i_{2}}\cdots G\left(s_{i_{m}}\right)Z_{i_{m}i_{1}}\right\rangle\right|\\
    &\leq\begin{cases}C/\sqrt{Nt^{3}}&\quad m=3\\
    C/\left(Nt^{2}\right)^{m/2-1}&\quad m\geq4
    \end{cases}\\
    &\leq\frac{C}{\sqrt{Nt^{3}}}.
\end{align*}
Note that if $A$ satisfies \eqref{eq:A3} then the $m=3$ term is $O\left(1/\sqrt{Nt^{2}}\right)$ so we obtain a better error bound. Using the resolvent identity $G_{z}\left(s_{j}\right)=(1+i\left(s_{j}-\eta_{z,t}\right)G_{z}\left(s_{j}\right))G_{z}$ and the local law we have
\begin{align*}
    \left\langle G_{z}\left(s_{j}\right)Z_{jj}\right\rangle&=\left\langle G_{z}Z_{jj}\right\rangle+i(s_{j}-\eta_{z,t})\left\langle G^{2}_{z}Z_{jj}\right\rangle-\left(s_{j}-\eta_{z,t}\right)^{2}\left\langle G_{z}\left(s_{j}\right)G^{2}_{z}Z_{jj}\right\rangle\\
    &=\left\langle G_{z}Z_{jj}\right\rangle+i(s_{j}-\eta_{z,t})\left\langle G^{2}_{z}Z_{jj}\right\rangle+O\left(\frac{\log^{2}N}{N\sqrt{t}}\right),
\end{align*}
and similarly
\begin{align*}
    \left\langle G_{z}\left(s_{i}\right)Z_{ij}G_{z}\left(s_{j}\right)Z_{ji}\right\rangle&=\left\langle G_{z}Z_{ij}G_{z}Z_{ji}\right\rangle+O\left(\frac{\log N}{\sqrt{Nt^{2}}}\right).
\end{align*}
Thus we find
\begin{align}
    \det\left(1_{2kN}-\frac{1}{\sqrt{N\sigma_{z,t}}}\mbf{G}\left(\mbf{s}\right)\mbf{Z}\right)&=\exp\left\{-\sqrt{\frac{N}{\sigma_{z,t}}}\sum_{i=1}^{k}\Tr{G_{z}Z_{ii}}-\frac{1}{2\sigma_{z,t}}\sum_{i,j=1}^{k}\Tr{G_{z}Z_{ij}G_{z}Z_{ji}}\right.\nonumber\\
    &\left.-{i}\sqrt{\frac{N}{\sigma_{z,t}}}\sum_{j=1}^{k}\Tr{G^{2}_{z}Z_{jj}}\left(s_{j}-\eta_{z,t}\right)+O\left(\frac{1}{\sqrt{Nt^{3}}}\right)\right\}.\label{eq:claim2}
\end{align}
From the definition of $Z_{ij}$ we observe that
\begin{align*}
    \sum_{i=1}^{k}\Tr{G_{z}Z_{ii}}&=\Tr{G_{z}\begin{pmatrix}0&\tr(\mbf{z})\\\tr(\bar{\mbf{z}})&0\end{pmatrix}}=\sum_{i=1}^{k}\Tr{G_{z}Z_{i}},
\end{align*}
and
\begin{align*} 
    -\frac{1}{2\sigma_{z,t}}\sum_{i,j=1}^{k}\left\langle G_{z}Z_{ij}G_{z}Z_{ji}\right\rangle&=\frac{\alpha_{z,t}}{\sigma_{z,t}}\tr (U_{1}^{*}\mbf{z}U_{1}U_{2}^{*}\bar{\mbf{z}}U_{2})-\sum_{i=1}^{k}\Re\left(\frac{\bar{\delta}_{z,t}z_{i}^{2}}{\sigma_{z,t}}\right).
\end{align*}

{For the product over $j=1,...,k$ in \eqref{eq:detM}, we use \Cref{lem:detRatio} with $V=\mbf{V}_{j}$, $Y^{-1}=-\mbf{F}^{(j-1)}$ and $W^{-1}=-\mbf{F}^{(j-1)}(\mbf{s})$. Note that $\Im Y=\eta_{z,t}+\frac{1}{\sqrt{N\sigma_{z,t}}}\Im\mbf{Z}^{(j-1)}\geq Ct$. Since in $\Omega$ we have $\|\mbf{s}-\eta_{z,t}\|\leq\sqrt{\frac{t}{N}}\log N$, by the resolvent identity we obtain
\begin{align*}
    \rho&=\|(\Im Y)^{-1}\|\cdot\|Y(Y^{-1}-W^{-1})Y\|\\
    &\leq\frac{C\log N}{\sqrt{Nt^{2}}}.
\end{align*}
By \eqref{eq:detRatio} we now have
\begin{align}
    \frac{\det\mbf{V}_{j}^{*}\mbf{F}^{(j-1)}(\mbf{s})\mbf{V}_{j}}{\det\mbf{V}_{j}^{*}\mbf{F}^{(j-1)}\mbf{V}_{j}}&=1+O\left(\frac{\log N}{\sqrt{Nt^{2}}}\right),\label{eq:FstoF}
\end{align}
and
\begin{align*}
    \frac{\det\mbf{V}_{i}^{*}\mbf{F}^{\left(i-1\right)}\mbf{V}_{i}}{\det\mbf{V}_{i}^{*}\mbf{G}^{\left(i-1\right)}\mbf{V}_{i}}&=1+O\left(\frac{\log N}{\sqrt{Nt^{2}}}\right),
\end{align*}
and thus
\begin{align}
    \prod_{j=1}^{k}\det\mbf{V}_{j}^{*}\mbf{F}^{(j-1)}(\mbf{s})\mbf{V}_{j}&=\left[1+O\left(\frac{\log N}{\sqrt{Nt^{2}}}\right)\right]\prod_{j=1}^{k}\det^{k}\left(V_{j}^{*}G^{(j-1)}_{z}V_{j}\right).\label{eq:claim3}
\end{align}
Combining \eqref{eq:claim1}, \eqref{eq:claim2} and \eqref{eq:claim3} and recalling the definition of $\psi_{i}$ from \eqref{eq:psi}, we conclude the proof.
}
\end{proof}
{We take the error outside the integral; in principle one needs to show that the integral of the absolute value of the leading order term is bounded above by a constant multiple of the absolute value of the integral, but this will be clear from the ensuing calculations. Thus, we have
\begin{align*}
    \wt{F}(\mbf{z};A^{(k)})&=\left[1+O\left(\frac{\log N}{\sqrt{Nt^{3}}}\right)\right]\wt{F}_{0}(\mbf{z};A^{(k)}),
\end{align*}
where $\wt{F}_{0}$ is given by
\begin{align*}
    \wt{F}_{0}(\mbf{z};A^{(k)})&=\frac{f_{N,k}v_{k}|\Delta\left(\mbf{z}\right)|^{2}\eta_{z,t}^{k^{2}}}{2^{k}\gamma_{z,t}^{k^{2}/2}}\left(\prod_{i=1}^{k}|\det V_{i}^{*}G^{\left(i-1\right)}_{z}V_{i}|^{k}\psi_{i}e^{-N\phi_{z,t}}\right)\\&\times e^{-\sum_{i=1}^{k}|z_{i}|^{2}}\int_{G}e^{\frac{\alpha_{z,t}}{\sigma_{z,t}}\tr(U_{1}^{*}\mbf{z}U_{1}U_{2}^{*}\bar{\mbf{z}}U_{2})}f(U_{1},U_{2})d\mu\left(U_{1},U_{2}\right).
\end{align*}}
Here {$G=U(k)\times U(k)/U(1)^{k}$, $d\mu(U_{1},U_{2})$ is the normalised Haar measure on $G$, $v_{k}=\text{Vol}(U(k))$ and}
\begin{align*}
    f\left(U_{1},U_{2}\right)&=\frac{v_{k}}{k!\left(2\pi\right)^{k}}e^{\sum_{i=1}^{k}\Re\left(\frac{\bar{\beta}^{2}_{z,t}z_{i}^{2}}{\gamma_{z,t}\sigma_{z,t}}\right)}\int_{\mbb{R}^{k}}e^{-\frac{1}{2}\sum_{i=1}^{k}s_{i}^{2}-{i}\sqrt{\frac{t}{\sigma_{z,t}}}\sum_{j=1}^{k}\left\langle G^{2}_{z}Z_{jj}\right\rangle s_{j}}\Delta^{2}\left(\mbf{s}\right)d\mbf{s}.
\end{align*}
To obtain this form we have changed variables $X\mapsto U_{1}\left(\eta_{z,t}+\mbf{s}/\sqrt{4N\gamma_{z,t}}\right)U_{2}^{*}$ {with Jacobian
\begin{align*}
    dX&=\frac{v_{k}^{2}\Delta^{2}(\mbf{s})}{k!(2\pi)^{k}(4N\gamma_{z,t})^{k(k-1)/2}}\prod_{i<j}^{k}\left(2\eta_{z,t}+\frac{s_{i}+s_{j}}{\sqrt{4N\gamma_{z,t}}}\right)\prod_{i=1}^{k}\left(\eta_{z,t}+\frac{s_{i}}{\sqrt{4N\gamma_{z,t}}}\right)ds_{i}d\mu(U_{1},U_{2}),
\end{align*}}
and extended the integrals over $s_{i}$ to the whole real line (at the cost of an $O\left(e^{-{c\log^{2}N}}\right)$ error). 

To compute $f(U_{1},U_{2})$ we first note that
\begin{align*}
    \frac{\alpha_{z,t}}{\sigma_{z,t}}\tr(U_{1}^{*}\mbf{z}U_{1}U_{2}^{*}\bar{\mbf{z}}U_{2})
\end{align*}
depends only on $U_{1}$ and $U_{2}$ in the combination $U_{1}U_{2}^{*}$, which is invariant under the transformation $\left(U_{1},U_{2}\right)\mapsto\left(U_{1}V,U_{2}V\right)$. The measure $d\mu$ is also invariant under this transformation. Therefore we can replace $f\left(U_{1},U_{2}\right)$ with its average
\begin{align*}
    \wt{f}(U_{1},U_{2})&=\int_{U\left(k\right)}f(U_{1}V,U_{2}V)d\mu\left(V\right).
\end{align*}
The term depending on $U_{1}$ and $U_{2}$ in the integrand of $f$ is
\begin{align*}
    \sum_{j=1}^{k}\left\langle G_{z}^{2}Z_{jj}\right\rangle s_{j}&=2i\sum_{j=1}^{k}\left(\beta_{z,t}\left(U^{{*}}_{2}\bar{\mbf{z}}U_{2}\right)_{jj}-\bar{\beta}_{z,t}\left(U^{{*}}_{1}\mbf{z}U_{1}\right)_{jj}\right)s_{j}.
\end{align*}
Now we note that $\frac{v_{k}}{k!\left(2\pi\right)^{k}}\Delta^{2}\left(\mbf{s}\right)d\mbf{s}d\mu\left(V\right)$ is the Jacobian of the map $Q=V\mbf{s}V^{*}$, so that we can write
\begin{align*}
    \wt{f}(U_{1},U_{2})&=e^{\Re\sum_{i=1}^{k}\frac{\bar{\beta}^{2}_{z,t}z_{i}^{2}}{\gamma_{z,t}\sigma_{z,t}}}\int_{\mbf{M}^{sa}_{k}}e^{-\frac{1}{2}\tr Q^{2}+\frac{{1}}{\sqrt{\gamma_{z,t}\sigma_{z,t}}}\tr\left[\bar{\beta}_{z,t}\left(U_{1}^{*}\mbf{z}U_{1}\right)-\beta_{z,t}\left(U_{2}^{*}\bar{\mbf{z}}U_{2}\right)\right]Q}dQ\\
    &=2^{k/2}\pi^{k^{2}/2}e^{\frac{|\beta_{z,t}|^{2}}{\gamma_{z,t}\sigma_{z}}{\tr}(U_{1}^{*}\mbf{z}U_{1}U_{2}^{*}\bar{\mbf{z}}U_{2})}.
\end{align*}
Inserting this into the expression for $\wt{F}_{0}$ {and recalling the definition $\sigma_{z,t}=\alpha_{z,t}+\frac{|\beta_{z,t}|^{2}}{\gamma_{z,t}}$}, we find that we are left with evaluating the following integral over $G$:
\begin{align*}
    I&=\int_{G}e^{\tr (U^{{*}}_{1}ZU_{1}U_{2}^{*}Z^{*}U_{2})}d\mu\left(U_{1},U_{2}\right)\\
    &=\int_{U\left(k\right)}e^{\tr (UZU^{*}Z^{*})}d\mu\left(U\right)\\
    &=\left(\prod_{j=1}^{k-1}j!\right)\frac{\det\left[e^{z_{i}\bar{z}_{j}}\right]}{|\Delta\left(\mbf{z}\right)|^{2}}.
\end{align*}
The final equality is the Harish-Chandra--Itzykson--Zuber formula. We find that the constant prefactors combine to result in
\begin{align*}
    \frac{f_{N,k}\pi^{k^{2}/2}v_{k}\left(\prod_{j=1}^{k-1}j!\right)\eta^{k^{2}}_{z,t}}{2^{k/2}\gamma_{z,t}^{k^{2}/2}}&=\frac{1}{\pi^{k}}\left(\frac{t^{3}\gamma_{z,t}}{\eta_{z,t}^{2}}\right)^{k}\left(\frac{\eta^{2}_{z,t}}{t^{2}\gamma_{z,t}\sigma_{z,t}}\right)^{k(k+1)/2}.
\end{align*}
Putting everything together we obtain \eqref{eq:Fasymptotic}.

\section{Integral over $S^{N-i}$}\label{sec:K}
The analysis of the integrals over $\mbf{v}_{i}$ follows the same pattern as the previous section. First we derive an integral formula in which $N$ appears as a parameter, then we use Laplace's method to obtain an asymptotic approximation.
\begin{lemma}
Let $\wt{K}_{j}$ be as defined in \eqref{eq:Ktilde} and $\eta>0$. Then
\begin{align}
    \wt{K}_{j}\left(z_{j};A^{\left(j-1\right)}\right)&=\frac{1}{t}\sqrt{\frac{N}{2\pi t}}\frac{e^{\frac{N}{t}\eta^{2}}}{\det\left(\eta^{2}+|A^{\left(j-1\right)}-w_{j}|^{2}\right)}\int_{-\infty}^{\infty}e^{\frac{iNp}{t}}\det^{-1}(1+ipH^{\left(j-1\right)}_{w_{j}}\left(\eta\right))dp,\label{eq:Kformula}
\end{align}
{where we recall that $H^{(j-1)}_{w_{j}}(\eta)=(\eta^{2}+|A^{(j-1)}-w_{j}|^{2})^{-1}$.}
\end{lemma}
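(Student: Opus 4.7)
The identity \eqref{eq:Kformula} is a direct application of the spherical integral formula \Cref{lem:sphericalint} with $k=1$ to the integrand
\[
f(\mbf{v})=\exp\Bigl(-\tfrac{N}{t}\|(A^{(j-1)}-w_j)\mbf{v}\|^{2}\Bigr),\qquad w_j:=z+\tfrac{z_j}{\sqrt{N\sigma_{z,t}}},
\]
followed by explicit evaluation of the resulting Gaussian integral. Since $f$ is merely bounded (not $L^{1}$) on $\mbb{C}^{N-j+1}$, failing to decay along the kernel of $A^{(j-1)}-w_j$, I would invoke the extension of \Cref{lem:sphericalint} stated just after its proof, using the auxiliary regulator $e^{-a\mbf{v}^{*}\mbf{v}}$ with $a=N\eta^{2}/t>0$. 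On the unit sphere this regulator contributes the prefactor $e^{N\eta^{2}/t}$ appearing in \eqref{eq:Kformula}, while on $\mbb{C}^{N-j+1}$ it makes the regularised integrand integrable.

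\textbf{Key steps.} With this regulator, the transform
\[
\hat f_a(x)=\frac{1}{\pi}\int_{\mbb{C}^{N-j+1}}\exp\Bigl\{-\mbf{v}^{*}\bigl[\tfrac{N}{t}\bigl(\eta^{2}+(A^{(j-1)}-w_j)^{*}(A^{(j-1)}-w_j)\bigr)+ix\bigr]\mbf{v}\Bigr\}\,d\mbf{v}
\]
is a pure Gaussian integral (the real part of the exponent is positive definite since $a>0$), evaluating to $\pi^{N-j}(t/N)^{N-j+1}\det^{-1}\bigl[\eta^{2}+(A^{(j-1)}-w_j)^{*}(A^{(j-1)}-w_j)+ixt/N\bigr]$. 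I would then change variables $p=xt/N$, factor $\det(\eta^{2}+|A^{(j-1)}-w_j|^{2})$ out of the determinant, and collect the dimension-dependent constants: multiplying by $b_{N,j}$ yields $b_{N,j}\,\pi^{N-j}(t/N)^{N-j}=\tfrac{1}{t}\sqrt{N/(2\pi t)}$, exactly the prefactor in \eqref{eq:Kformula}. Finally, I would replace $\wt{H}$ by $H$ via Sylvester's determinant identity: for square matrices $(A-w)^{*}(A-w)$ and $(A-w)(A-w)^{*}$ share the same spectrum, so $\wt{H}^{(j-1)}_{w_j}(\eta)$ and $H^{(j-1)}_{w_j}(\eta)$ do as well, hence $\det(1+ip\wt{H}^{(j-1)}_{w_j}(\eta))=\det(1+ipH^{(j-1)}_{w_j}(\eta))$.

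\textbf{Expected obstacle.} The lemma contains no substantive analytic content beyond Gaussian integration, so I do not anticipate any genuine obstacle. The only risk is bookkeeping of normalisations: the $\pi^{-1}$ in the definition of $\hat f$ from \Cref{lem:sphericalint}, the volume $\pi^{N-j+1}$ from the $(N-j+1)$-dimensional complex Gaussian, the Jacobian $N/t$ of the change of variables $x\mapsto p$, and the dimension $N-j+1$ of $\mbf{v}_j$. A useful consistency check is that the right hand side of \eqref{eq:Kformula} must be independent of $\eta>0$ (the left hand side manifestly is), and this is automatic from the construction since $\eta$ enters the derivation only through the arbitrary regulator $a$.
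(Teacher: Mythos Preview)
Your proposal is correct and follows essentially the same route as the paper: apply \Cref{lem:sphericalint} (in its regularised form with $a=N\eta^{2}/t$), evaluate the resulting complex Gaussian integral, and change variables $p=xt/N$. You are in fact slightly more careful than the paper in one respect: you make explicit the Sylvester step $\det(1+ip\wt{H}^{(j-1)}_{w_j})=\det(1+ipH^{(j-1)}_{w_j})$, whereas the paper silently writes $|A^{(j-1)}-w_j|^{2}$ in the quadratic form and passes directly to $H^{(j-1)}_{w_j}$.
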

\begin{proof}
We begin by multiplying by $e^{\frac{N}{t}\eta^{2}}$ and writing $\wt{K}_{j}$ in such a way that we can apply \Cref{lem:sphericalint}:
\begin{align*}
    \wt{K}_{j}\left(z_{j};A^{\left(j-1\right)}\right)&=e^{\frac{N}{t}\eta^{2}}\int_{S^{N-j}}f\left(\mbf{v}_{j}\right)dS_{N-j}\left(\mbf{v}_{j}\right),
\end{align*}
where
\begin{align*}
    f\left(\mbf{v}_{j}\right)&=\left(\frac{N}{\pi t}\right)^{N-j}e^{-\frac{N}{t}\mbf{v}_{j}^{*}\left(\eta^{2}+|A^{\left(j-1\right)}-w_{j}|^{2}\right)\mbf{v}_{j}}.
\end{align*}
The transform $\hat{f}$ is a Gaussian integral:
\begin{align*}
    \frac{N}{t}\hat{f}\left(\frac{Np}{t}\right)&=\left(\frac{N}{\pi t}\right)^{N-j+1}\int_{\mbb{C}^{N-j+1}}e^{-\frac{iNp}{t}\left\|\mbf{x}\right\|^{2}}f\left(\mbf{x}\right)d\mbf{x}\\
    &=\det^{-1}\left(\eta^{2}+ip+|A^{\left(j-1\right)}-w_{j}|^{2}\right).
\end{align*}
Since $\hat{f}\in L^{1}$, we can apply \Cref{lem:sphericalint}, after which we are left with an integral over a single real variable:
\begin{align*}
    \wt{K}_{j}(z_{j};A^{\left(j-1\right)})&=\frac{1}{t}\sqrt{\frac{N}{2\pi t}}e^{\frac{N}{t}\eta^{2}}\int_{-\infty}^{\infty}\frac{N}{t}\hat{f}\left(\frac{Np}{t}\right)dp\\
    &=\frac{1}{t}\sqrt{\frac{N}{2\pi t}}\frac{e^{\frac{N}{t}\eta^{2}}}{\det\left(\eta^{2}+|A^{\left(j-1\right)}-w_{j}|^{2}\right)}\int_{-\infty}^{\infty}e^{\frac{iNp}{t}}\det^{-1}(1+ipH^{\left(j-1\right)}_{w_{j}}\left(\eta\right))dp.
\end{align*}
\end{proof}

In the above lemma we are free to choose $\eta$ and so we set $\eta=\eta_{z,t}$. {We first claim that we can neglect the integral over $|p|>\sqrt{\frac{t^{3}}{N}}\log N$, i.e.
\begin{align}
    \left|\int_{|p|>\sqrt{\frac{t^{3}}{N}}\log N}e^{\frac{iNp}{t}}\det^{-1}(1+ipH^{(j-1)}_{w_{j}})dp\right|&\leq e^{-c\log^{2}N}.\label{eq:large_p}
\end{align}
To prove this, observe that by \eqref{eq:A_gamma} {and interlacing} we have $\Tr{(H^{(j-1)}_{w_{j}})^{2}}\geq cN/t^{3}$. Using the bound
\begin{align*}
    |\det^{-1}(1+ipH^{(j-1)}_{w_{j}})|&\leq\exp\left\{-\frac{Np^{2}\Tr{(H^{(j-1)}_{w_{j}})^{2}}}{2(1+p^{2}/\eta_{z,t}^{4})}\right\}
\end{align*}
when $|p|<\eta_{z,t}^{2}+\|A^{(j-1)}_{w_{j}}\|^{2}$ (this follows from $\log(1+x)\geq\frac{x}{1+x}$) and the bound
\begin{align*}
    |\det^{-1}(1+ipH^{(j-1)}_{w_{j}})|&\leq\left(1+\frac{p^{2}}{(\eta_{z,t}^{2}+\|A^{(j-1)}_{w_{j}}\|^{2})^{2}}\right)^{-N/2}
\end{align*}
otherwise, we deduce that the integrand is $O(e^{-c\log^{2}N})$ in the region $\sqrt{\frac{t^{3}}{N}}\log N<|p|<\eta^{2}_{z,t}+\|A^{(j-1)}_{w_{j}}\|^{2}$ and decays as $|p|^{-N}$ for large $|p|$, from which \eqref{eq:large_p} follows.}

In the region $|p|<\sqrt{\frac{t^{3}}{N}}\log N$ we Taylor expand the determinant in the integrand:
\begin{align*}
    \det^{-1}(1+ipH^{\left(j-1\right)}_{w_{j}})&=\exp\left\{-ip\tr H^{\left(j-1\right)}_{w_{j}}-\frac{1}{2}p^{2}\tr(H^{\left(j-1\right)}_{w_{j}})^{2}+O\left(\frac{\log^{3}N}{\sqrt{Nt}}\right)\right\}.
\end{align*}
As in the previous section, we use interlacing and the resolvent identity to replace $G^{\left(j-1\right)}_{w_{j}}$ with $G_{z}$. For the first term we find
\begin{align*}
    \left\langle H^{\left(j-1\right)}_{w_{j}}\right\rangle&=\left\langle H_{w_{j}}\right\rangle+O\left(\frac{1}{Nt^{2}}\right)\\
    &=\frac{1}{2\eta_{z,t}}\Im\left\langle G_{z}\right\rangle+\frac{1}{2\sqrt{N\sigma_{z,t}}\eta_{z,t}}\Im\left\langle G^{2}_{z}Z_{j}\right\rangle+\frac{1}{N\sigma_{z,t}\eta_{z,t}}\Im\left\langle\left(G_{z}Z_{j}\right)^{2}G_{w_{j}}\right\rangle+O\left(\frac{1}{Nt^{2}}\right)\\
    &=\left\langle H_{z}\right\rangle+\frac{1}{2\sqrt{N\sigma_{z,t}}\eta_{z,t}}\left\langle G^{2}_{z}Z_{j}\right\rangle+O\left(\frac{1}{Nt^{5/2}}\right).
\end{align*}
For the second term we find
\begin{align}
    \left\langle(H^{\left(j-1\right)}_{w_{j}})^{2}\right\rangle&=\left\langle H^{2}_{w_{j}}\right\rangle+O\left(\frac{1}{Nt^{4}}\right)\nonumber\\
    &=\frac{1}{4\eta_{z,t}^{3}}\left\langle\Im G_{w_{j}}-\eta_{z,t} G^{2}_{w_{j}}\right\rangle+O\left(\frac{1}{Nt^{4}}\right)\nonumber\\
    &=\left\langle H^{2}_{z}\right\rangle+O\left(\frac{1}{\sqrt{N}t^{4}}\right).\label{eq:gammaPert}
\end{align}
Thus we obtain
\begin{align*}
    \det^{-1}(1+ipH^{\left(j-1\right)}_{w_{j}})&=\exp\left\{-\frac{iNp}{t}-\frac{ip}{2\sqrt{N}\eta_{z,t}}\tr G^{2}_{z}Z_{j}-\frac{1}{2}p^{2}\tr H^{2}_{z}+O\left(\frac{\log N}{\sqrt{Nt^{2}}}\right)\right\}.
\end{align*}
{After replacing the integrand by the above approximation and taking the error outside the integral,} the integral over $p$ can be extended to the whole real line. {This leaves us with a Gaussian integral that is easily computed:}
\begin{align}
    \int_{-\infty}^{\infty}e^{\frac{iNp}{t}}\det^{-1}\left(1+ipH^{\left(j-1\right)}_{w_{j}}\right)dp&=\left[1+O\left(\frac{\log N}{\sqrt{Nt^{2}}}\right)\right]\sqrt{\frac{2\pi\eta_{z,t}^{2}}{N\gamma_{z,t}}}\exp\left\{-\frac{\left\langle G^{2}_{z}Z_{j}\right\rangle^{2}}{8\gamma_{z,t}\sigma_{z,t}}\right\}.\label{eq:pIntegral}
\end{align}

{Now consider the determinant in front of the integral in \eqref{eq:Kformula}.} From the discussion following \eqref{eq:prefactor}, we have
\begin{align*}
    \frac{\det\left(\eta_{z,t}^{2}+|A-z|^{2}\right)}{\det\left(\eta_{z,t}^{2}+|A^{(j-1)}-w_{j}|^{2}\right)}&=\left[1+O\left(\frac{1}{\sqrt{Nt^{2}}}\right)\right]\left(\prod_{l=1}^{j-1}\left|\det^{-1}(V_{l}^{*}G^{(l-1)}_{z}V_{l})\right|\right)\\&\times\exp\left\{\sqrt{\frac{N}{\sigma_{z,t}}}\left\langle G_{z}Z_{j}\right\rangle+\frac{1}{2\sigma_{z,t}}\left\langle(G_{z}Z_{j})^{2}\right\rangle\right\}.
\end{align*}
Note that
\begin{align*}
    -\frac{1}{8}\left\langle G^{2}_{z}Z_{j}\right\rangle^{2}&=\Re\left(\bar{\beta}_{z,t}^{2}z_{j}^{2}\right)-\left|\beta_{z,t}z_{j}\right|^{2},
\end{align*}
and
\begin{align*}
    \frac{1}{2}\left\langle(G_{z}Z_{j})^{2}\right\rangle&=\Re(\bar{\delta}_{z,t}z_{j}^{2})-\alpha_{z,t}|z_{j}|^{2},
\end{align*}
so that
\begin{align}
    &\frac{\det(\eta^{2}_{z,t}+|A-z|^{2})}{\det(\eta^{2}_{z,t}+|A^{(j-1)}-w_{j}|^{2})}\nonumber\\&=\left[1+O\left(\frac{1}{\sqrt{Nt^{2}}}\right)\right]\left(\prod_{l=1}^{j-1}|\det^{-1}(V_{l}^{*}G^{(l-1)}_{z}V_{l})|\right)e^{\Re(\bar{\tau}_{z,t}z_{j}^{2})-|z_{j}|^{2}}.\label{eq:prefactor2}
\end{align}
Combining \eqref{eq:pIntegral} and \eqref{eq:prefactor2} we obtain the asymptotic in \eqref{eq:Kasymptotic}.

{To prove \eqref{eq:qformexpectation}, we first note that by \eqref{eq:detRatio1} we have
\begin{align*}
    \frac{\det V_{j}^{*}G^{(j-1)}_{z}V_{j}}{\det V_{j}^{*}G^{(j-1)}_{w_{j}}V_{j}}&=1+O\left(\frac{1}{\sqrt{Nt^{2}}}\right),
\end{align*}}
and can therefore replace $V_{j}^{*}G^{\left(j-1\right)}_{z}V_{j}$ with $V_{j}^{*}G^{\left(j-1\right)}_{w_{j}}V_{j}$.

Secondly, we note that
\begin{align*}
    \left|\det V_{j}^{*}G^{\left(j-1\right)}_{w_{j}}V_{j}\right|&=\eta^{2}_{z,t}\left(\mbf{v}_{j}^{*}H^{(j-1)}_{w_{j}}\mbf{v}_{j}\right)\left(\mbf{v}_{j}^{*}\wt{H}^{(j-1)}_{w_{j}}\mbf{v}\right)+\left|\mbf{v}_{j}^{*}H^{(j-1)}_{w_{j}}(A^{(j-1)}-w_{j})\mbf{v}_{j}\right|^{2}.
\end{align*}
We will show that each of these quadratic forms concentrates around a deterministic quantity with high probability, and hence the integral on the left-hand side of \eqref{eq:qformexpectation} can be evaluated by simply replacing the integrand by a constant. The relevant quantities are
\begin{align}
    \wt{\alpha}^{(j-1)}&=\eta_{z,t}^{2}\left\langle\wt{H}^{(j-1)}_{w_{j}}H^{(j-1)}_{w_{j}}\right\rangle,\label{eq:alphaTilde}\\
    \wt{\beta}^{(j-1)}&=\eta_{z,t}\left\langle(H^{(j-1)}_{w_{j}})^{2}(A^{(j-1)}-w_{j})\right\rangle,\label{eq:betaTilde}\\
    \wt{\gamma}^{(j-1)}&=\eta_{z,t}^{2}\left\langle(H^{(j-1)}_{w_{j}})^{2}\right\rangle=\eta_{z,t}^{2}\left\langle(\wt{H}^{(j-1)}_{w_{j}})^{2}\right\rangle.\label{eq:gammaTilde}
\end{align}
The following lemma gives the concentration of quadratic forms.
\begin{lemma}
We have
\begin{align}
    \nu_{j}\left(\left\{\left|\eta_{z,t}\mbf{v}_{j}^{*}H^{(j-1)}_{w_{j}}\mbf{v}_{j}-\frac{t}{\eta_{z,t}}\wt{\alpha}^{(j-1)}\right|>\frac{\log N}{\sqrt{Nt^{2}}}\right\}\right)\leq e^{-c\log^{2}N},\label{eq:alphaConc}\\
    \nu_{j}\left(\left\{\left|\mbf{v}_{j}^{*}H^{(j-1)}_{w_{j}}(A^{(j-1)}-w_{j})\mbf{v}_{j}-\frac{t}{\eta_{z,t}}\wt{\beta}^{(j-1)}\right|>\frac{\log N}{\sqrt{Nt^{2}}}\right\}\right)\leq e^{-c\log^{2}N},\label{eq:betaConc}\\
    \nu_{j}\left(\left\{\left|\eta_{z,t}\mbf{v}_{j}^{*}\wt{H}^{(j-1)}_{w_{j}}\mbf{v}_{j}-\frac{t}{\eta_{z,t}}\wt{\gamma}^{(j-1)}\right|>\frac{\log N}{\sqrt{Nt^{3}}}\right\}\right)\leq e^{-c\log^{2}N},\label{eq:gammaConc}
\end{align}
uniformly in $\mbf{v}_{l},\,l=1,...,j-1$.
\end{lemma}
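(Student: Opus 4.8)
The plan is to prove each of the three estimates by a Chernoff bound, for which one needs an exact integral formula for the Laplace transform of the relevant quadratic form under $\nu_j$, obtained from the same Hubbard--Stratonovich/Fourier representation used for $\wt{K}_j$. For the two Hermitian forms one works directly; for the complex form $\mbf{v}_j^{*}H^{(j-1)}_{w_j}(A^{(j-1)}-w_j)\mbf{v}_j$ one first passes to $\Re(e^{i\theta}\,\cdot\,)$ and takes a union bound over a net of $\log N$ values of $\theta$. So fix a Hermitian matrix $B$ --- one of $\eta_{z,t}H^{(j-1)}_{w_j}$, $\eta_{z,t}\wt{H}^{(j-1)}_{w_j}$, or $\Re(e^{i\theta}H^{(j-1)}_{w_j}(A^{(j-1)}-w_j))$ --- let $m$ be the corresponding centre from the lemma, and $\Phi(\mbf{v})=\mbf{v}^{*}B\mbf{v}$. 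Since $\Phi$ is quadratic, $\int_{S^{N-j}}e^{-\frac Nt\mbf{v}^{*}|A^{(j-1)}-w_j|^{2}\mbf{v}+\lambda\Phi(\mbf{v})}dS$ is again of the form treated in the lemma giving \eqref{eq:Kformula}, with $|A^{(j-1)}-w_j|^{2}$ replaced by $|A^{(j-1)}-w_j|^{2}-\tfrac{\lambda t}NB$ and auxiliary shift $\eta=\eta_{z,t}$. Applying that formula to numerator and denominator gives
$$\mbb{E}_{\nu_j}\!\left[e^{\lambda\Phi}\right]=\frac{\det H_\lambda}{\det H_0}\cdot\frac{\int_{\mbb{R}}e^{iNp/t}\det^{-1}(1+ipH_\lambda)\,dp}{\int_{\mbb{R}}e^{iNp/t}\det^{-1}(1+ipH_0)\,dp},\qquad H_0=\wt{H}^{(j-1)}_{w_j},\quad H_\lambda=\Big(H_0^{-1}-\tfrac{\lambda t}NB\Big)^{-1},$$
which is legitimate for $|\lambda|\le\lambda_{\max}$ with $\lambda_{\max}\asymp\sqrt{Nt^{2}}\log N$ (resp.\ $\sqrt{Nt^{3}}\log N$ for the $\gamma$-form), since then $\tfrac{\lambda t}N\|B\|\ll\eta_{z,t}^{2}$.

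Next I would expand $\log(\det H_\lambda/\det H_0)=\sum_{r\ge1}\tfrac1r(\tfrac{\lambda t}N)^{r}\tr(BH_0)^{r}$. The $r=1$ term $\lambda t\langle BH_0\rangle$ equals, using $(A^{(j-1)}-w_j)\wt{H}^{(j-1)}_{w_j}=H^{(j-1)}_{w_j}(A^{(j-1)}-w_j)$ and its adjoint, precisely $\lambda m$ (for the $\beta$-form, $\lambda\Re(e^{i\theta}m)$), so it cancels the factor $e^{-\lambda m}$. The $r=2$ term $V\lambda^{2}:=\tfrac{t^{2}}{2N}\langle(BH_0)^{2}\rangle$ is the variance, and the crux is the bound $\langle(BH_0)^{2}\rangle\le C\eta_{z,t}^{-6}$ for the $\alpha$- and $\beta$-forms and $\le C\eta_{z,t}^{-7}$ for the $\gamma$-form, which yields $V\le C/(Nt^{2})$, resp.\ $C/(Nt^{3})$. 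For the $\alpha$-form this follows by rewriting the trace through the block resolvent, $\langle(\wt{H}^{(j-1)}_{w_j}H^{(j-1)}_{w_j})^{2}\rangle=\eta_{z,t}^{-4}\langle G_{w_j}^{(j-1)}E^{*}G_{w_j}^{(j-1)}EG_{w_j}^{(j-1)}E^{*}G_{w_j}^{(j-1)}E\rangle$, then passing from $G^{(j-1)}_{w_j}(\eta_{z,t})$ to $G_z(\eta_{z,t})$ by interlacing and the resolvent identity, and applying \eqref{eq:A2}/\eqref{eq:trace2} to this alternating product of four $G_z$'s with the off-diagonal matrices $E,E^{*}$ (smaller by two powers of $\eta_{z,t}$ than the naive estimate); the $\gamma$-form uses instead the crude bound $\|\wt{H}^{(j-1)}_{w_j}\|\le\eta_{z,t}^{-2}$ together with \eqref{eq:A_gamma}. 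The terms $r\ge3$ are, at $\lambda=\lambda_{\max}$, bounded by a fixed negative power of $Nt^{2}$ times $\log^{r}N$ and hence $o(1)$ after truncating at a suitable finite order, exactly as in the paper's other Taylor truncations. The ratio of $p$-integrals I would treat as in the derivation of \eqref{eq:pIntegral}: since $\tr H_\lambda^{2}\ge cN\eta_{z,t}^{-3}$ one restricts to $|p|\le\sqrt{\eta_{z,t}^{3}/N}\log N$, Taylor-expands $\det^{-1}(1+ipH_\bullet)=\exp\{-ip\tr H_\bullet-\tfrac12p^{2}\tr H_\bullet^{2}+O(\cdot)\}$, and the ratio reduces to $(\tr H_0^{2}/\tr H_\lambda^{2})^{1/2}\exp\{-\tfrac{(N/t-\tr H_\lambda)^{2}}{2\tr H_\lambda^{2}}+\tfrac{(N/t-\tr H_0)^{2}}{2\tr H_0^{2}}+O(\cdot)\}$; with $\tr H_\lambda=\tr H_0+O(\lambda t^{-2})$, $\tr H_\lambda^{2}=\tr H_0^{2}+O(\lambda t^{-4})$ (again via the block rewriting, \eqref{eq:A_alpha}, \eqref{eq:A_gamma}, and interlacing plus the resolvent identity to pass from $A^{(j-1)}$ to $A$ and $w_j$ to $z$, as for \eqref{eq:Kasymptotic}) and the estimate $N/t-\tr H_0=O(\sqrt{N}\,t^{-1}\log N)$ already obtained there, one checks this factor is $\exp\{o(1)\}$ at $\lambda=\lambda_{\max}$, and in any case contributes no positive term of order $\log^{2}N$. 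Altogether $\mbb{E}_{\nu_j}[e^{\lambda(\Phi-m)}]\le e^{CV\lambda^{2}+o(1)}$, uniformly for $|\lambda|\le\lambda_{\max}$ and in $\mbf{v}_l$, $l<j$.

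Finally, Chernoff with $\lambda=\pm\epsilon_N/(2CV)$ --- which is $\asymp\sqrt{Nt^{2}}\log N\le\lambda_{\max}$ when $\epsilon_N=\log N/\sqrt{Nt^{2}}$, resp.\ $\asymp\sqrt{Nt^{3}}\log N$ when $\epsilon_N=\log N/\sqrt{Nt^{3}}$ --- yields $\nu_j(|\Phi-m|>\epsilon_N)\le e^{-c\epsilon_N^{2}/V}\le e^{-C\log^{2}N}$; a union over the $\theta$-net handles the non-Hermitian form, and since every constant used depends only on those of \eqref{eq:A_g}--\eqref{eq:A2} and on $k$, the bounds are uniform in $\mbf{v}_l$, $l<j$.

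The main obstacle is the variance bound for the $\beta$-form. Unlike the $\alpha$-case, rewriting $\langle(BH_0)^{2}\rangle$ here produces diagonal blocks $E_{\pm}$ interspersed among the off-diagonal ones, to which \eqref{eq:trace2} does not directly apply; one must split $\langle(BH_0)^{2}\rangle$ into a "regular" part --- estimated through $\|H^{(j-1)}_{w_j}\|\le\eta_{z,t}^{-2}$ and \eqref{eq:A_alpha}, using $(A^{(j-1)}-w_j)\wt{H}^{(j-1)}_{w_j}=H^{(j-1)}_{w_j}(A^{(j-1)}-w_j)$ to collapse factors of $A^{(j-1)}-w_j$ --- and an "anomalous" part $\langle((H^{(j-1)}_{w_j})^{2}(A^{(j-1)}-w_j))^{2}\rangle$, for which one needs one more power of $\eta_{z,t}$ than the naive bound, obtained by finding a rewriting of it as an alternating product of $G_z$ with $E,E^{*}$ alone (via $E_+=EE^{*}$) and applying \eqref{eq:A2}/\eqref{eq:trace2}, or by exploiting its vanishing leading term. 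Reaching the exponent $-6$ rather than $-7$ here is exactly what makes the $\beta$-rate $1/\sqrt{Nt^{2}}$ and not $1/\sqrt{Nt^{3}}$, and it is also the place where uniformity over all codimension-$(j-1)$ compressions of $A$ must be handled most carefully.
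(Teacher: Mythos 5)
Your strategy --- get an exact formula for $\mbb{E}_{\nu_j}[e^{\lambda\Phi}]$ from the same Hubbard--Stratonovich representation used for $\wt{K}_j$, then Chernoff --- is the same as the paper's. The execution differs slightly: you propose to Taylor-expand $\log(\det H_\lambda/\det H_0)$ term by term and control the remainder, while the paper instead bounds the $p$-integral directly by $C\sqrt{t^3/N}$ (so that it cancels against the asymptotics of $\wt{K}_j$) and then disposes of the determinant ratio in one stroke via $\log(1+x)\le x/(1+x)$, giving $m_j\le\exp\{\tr(M^{(j-1)})^2/(1-\|M^{(j-1)}\|)\}$ with $M^{(j-1)}=\tfrac{rt}{N}\sqrt{\wt{H}^{(j-1)}_{w_j}}\,B\,\sqrt{\wt{H}^{(j-1)}_{w_j}}$; this reduces everything to bounding $\|M^{(j-1)}\|$ and $\tr(M^{(j-1)})^2$, i.e.\ exactly your $V$. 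Both routes end at the same Chernoff bound; the paper's is slightly tidier because it avoids having to track the orders $r\ge 3$.

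Two more substantive points. First, your intermediate exponents are off: with $B=\eta_{z,t}H$ (resp.\ $\eta_{z,t}\wt{H}$) and $H_0=\wt{H}$, the bound you need on $\langle(BH_0)^2\rangle$ is $C\eta_{z,t}^{-4}$ (resp.\ $C\eta_{z,t}^{-5}$), not $C\eta_{z,t}^{-6}$ (resp.\ $C\eta_{z,t}^{-7}$) --- you appear to have quoted the exponents for $\langle(H\wt{H})^2\rangle$ and $\langle\wt{H}^4\rangle$, forgetting the extra $\eta_{z,t}^{2}$ from $B$. Your claimed $V\le C/(Nt^2)$ resp.\ $C/(Nt^3)$ is what the corrected exponents give, so this is a slip that does not propagate. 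Second, and more importantly, you overestimate the difficulty of the $\alpha$- and $\beta$-cases. You invoke \eqref{eq:trace2}/\eqref{eq:A2} for $\alpha$ and flag $\beta$ as ``the main obstacle'' requiring a split into regular and anomalous parts; neither is necessary. For $\alpha$, since $\sqrt{\wt{H}}H\sqrt{\wt{H}}\ge0$ one has
$\tr(H\wt{H})^2=\tr(\sqrt{\wt{H}}H\sqrt{\wt{H}})^2\le\|\sqrt{\wt{H}}H\sqrt{\wt{H}}\|\,\tr H\wt{H}\le\eta_{z,t}^{-4}\cdot CN/\eta_{z,t}^{2}$
from \eqref{eq:A_alpha} alone. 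For $\beta$, write $K=H^{(j-1)}_{w_j}(A^{(j-1)}-w_j)$; your ``anomalous'' term is $\tr(K\wt{H})^2=\tr(\wt{H}K)^2$ by cyclicity, and applying Cauchy--Schwarz to $Z=\wt{H}K$ (rather than $Z=K\wt{H}$) makes $KK^{*}=H(1-\eta_{z,t}^2H)H$ appear:
$|\tr(\wt{H}K)^2|\le\tr\wt{H}^2KK^{*}\le\tr\wt{H}^2H\le\|\wt{H}\|\tr H\wt{H}\le CN/\eta_{z,t}^{4}$,
which is precisely the rate you need; the cross term $\tr\wt{H}K\wt{H}K^{*}=\tr\wt{H}H(1-\eta_{z,t}^2H)H$ reduces the same way. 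The extra power of $\eta_{z,t}$ you thought was missing comes for free from the correct Cauchy--Schwarz factorisation together with \eqref{eq:A_alpha}; this is what justifies the paper's assertion $\tr(M^{(j-1)})^2\le Cr^2/N$ for the $\beta$-form, with no appeal to the two-resolvent estimate and no decomposition.
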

\begin{proof}
Let $B$ be a Hermitian matrix and define the moment generating function
\begin{align*}
    m_{j}\left(r;B\right)&=e^{-rt{\Tr{\wt{H}^{\left(j-1\right)}_{w_{j}}B}}}\mbb{E}_{j}\left[e^{{r}\mbf{v}_{j}^{*}B\mbf{v}_{j}}\right]\\
    &=\frac{1}{K_{j}(w_{j};A^{\left(j-1\right)})}e^{\frac{N}{t}\eta_{z,t}^{2}-rt\Tr{\wt{H}^{\left(j-1\right)}_{w_{j}}B}}\int_{S^{N-j}}e^{-\frac{N}{t}\mbf{v}_{j}^{*}\left(\eta_{z,t}^{2}+|A^{\left(j-1\right)}-w_{j}|^{2}-\frac{rt}{N}B\right)\mbf{v}_{j}}dS_{N-j}\left(\mbf{v}_{j}\right).
\end{align*}
Let
\begin{align*}
    M^{\left(j-1\right)}&=\frac{rt}{N}\sqrt{\wt{H}^{\left(j-1\right)}_{w_{j}}}B\sqrt{\wt{H}^{\left(j-1\right)}_{w_{j}}}
\end{align*}
and assume that $\|M^{\left(j-1\right)}\|<1-c$ for some $0<c<1$. Following the same steps as in the derivation of \eqref{eq:Kformula} we can rewrite this as follows:
\begin{align*}
    m_{j}\left(r;B\right)&=\frac{e^{\frac{N}{t}\eta_{z,t}^{2}}I\left(p,B\right)}{{\wt{K}_{j}}\det\left(\eta_{z,t}^{2}+|A^{\left(j-1\right)}-w_{j}|^{2}\right)}\exp\left\{-\tr M^{\left(j-1\right)}-\tr\log(1-M^{\left(j-1\right)})\right\},
\end{align*}
where
\begin{align*}
    I\left(p,B\right)&={\sqrt{\frac{N}{2\pi t^{3}}}}\int_{-\infty}^{\infty}e^{\frac{iNp}{t}}\det^{-1}\left(1+ip\sqrt{\wt{H}^{\left(j-1\right)}_{w_{j}}}(1-M^{\left(j-1\right)})^{-1}\sqrt{\wt{H}^{\left(j-1\right)}_{w_{j}}}\right)dp.
\end{align*}
Since $1-M^{\left(j-1\right)}>c$, we have
\begin{align*}
    \tr{\left(((1-M^{\left(j-1\right)})^{-1}\wt{H}^{\left(j-1\right)}_{w_{j}})^{2}\right)}&\geq\frac{N\Tr{(\wt{H}^{\left(j-1\right)}_{w_{j}})^{2}}}{(1+\|M^{\left(j-1\right)}\|)^{2}}\geq\frac{cN}{t^{3}},
\end{align*}
and so by the same argument used to prove \eqref{eq:pIntegral} we find
\begin{align*}
    I\left(p,B\right)&\leq C\sqrt{\frac{N}{t^{3}}}\int_{-\infty}^{\infty}e^{-cp^{2}\tr\left((1-M^{\left(j-1\right)})^{-1}\wt{H}^{\left(j-1\right)}_{w_{j}}\right)^{2}}dp\\
    &\leq C.
\end{align*}
From the asymptotics in {\eqref{eq:pIntegral} it then follows that
\begin{align*}
    \frac{e^{\frac{N}{t}\eta^{2}_{z,t}}I(p,B)}{\wt{K}_{j}\det(\eta^{2}_{z,t}+|A^{(j-1)}-w_{j}|^{2})}&\leq C,
\end{align*}
and so}
\begin{align}
    m_{j}\left(r;B\right)&\leq C\exp\left\{-\tr M^{\left(j-1\right)}-\tr\log(1-M^{\left(j-1\right)})\right\}\nonumber\\
    &\leq C\exp\left\{\frac{1}{1-\|M^{\left(j-1\right)}\|}\tr(M^{\left(j-1\right)})^{2}\right\},\label{eq:mBound}
\end{align}
where we have used the inequality {$-x-\log\left(1-x\right)\leq\frac{x^{2}}{1-x}$}. Thus bounding the moment generating function amounts to bounding $\left\|\sqrt{\wt{H}^{\left(j-1\right)}_{w_{j}}}B\sqrt{\wt{H}^{\left(j-1\right)}_{w_{j}}}\right\|$ and $\tr(\wt{H}^{\left(j-1\right)}_{w_{j}}B)^{2}$.

Consider first $B=\eta^{2}_{z,t}\wt{H}^{\left(j-1\right)}_{w_{j}}$; then
\begin{align*}
    \|M^{\left(j-1\right)}\|&\leq\frac{r\eta^{2}_{z,t}t}{N}\|\wt{H}^{\left(j-1\right)}_{w_{j}}\|^{2}\\
    &\leq\frac{rt}{N\eta^{2}_{z,t}}<\frac{Cr}{Nt}.
\end{align*}
{Thus we can apply the bound in \eqref{eq:mBound} up to $r=Nt/C$}. Moreover we have
\begin{align*}
    \tr(M^{\left(j-1\right)})^{2}&=\frac{r^{2}\eta_{z,t}^{4}t^{2}}{N^{2}}\tr(\wt{H}^{\left(j-1\right)}_{w_{j}})^{4}\\
    &\leq\frac{r^{2}t^{2}}{N^{2}\eta_{z,t}^{2}}\tr\wt{H}^{\left(j-1\right)}_{w_{j}}\\
    &\leq\frac{Cr^{2}}{Nt}.
\end{align*}
{By Markov's inequality we obtain
\begin{align*}
    \nu_{j}\left(\left|\eta^{2}_{z,t}\mbf{v}_{j}^{*}\wt{H}^{\left(j-1\right)}_{w_{j}}\mbf{v}_{j}-t\wt{\gamma}^{(j-1)}\right|>x\right)&\leq e^{-xr}m_{j}(r;\eta^{2}_{z,t}\wt{H}^{(j-1)}_{w_{j}}).
\end{align*}
Choosing $x=\frac{\log N}{\sqrt{Nt}}$ and $r=\sqrt{Nt}\log N$ we obtain \eqref{eq:gammaConc}.}

Now consider $B^{\left(j-1\right)}=\eta^{2}_{z,t}H^{\left(j-1\right)}_{w_{j}}$. In this case we have
\begin{align*}
    \|M^{\left(j-1\right)}\|&\leq\frac{rt}{N\eta^{2}_{z,t}}\leq\frac{Cr}{Nt},\\
    \tr(M^{\left(j-1\right)})^{2}&=\frac{r^{2}\eta_{z,t}^{4}t^{2}}{N^{2}}\tr(H^{\left(j-1\right)}_{w_{j}}\widetilde{H}^{\left(j-1\right)}_{w_{j}})^{2}\leq\frac{Cr^{2}}{N}.
\end{align*}
{Applying Markov's inequality with $x=\frac{\log N}{\sqrt{N}}$ and $r=\sqrt{N}\log N$ (note that $\sqrt{N}\log N\ll Nt$ since $Nt^{2}>N^{2\epsilon}$) we obtain \eqref{eq:alphaConc}.}
Finally we consider $B^{\left(j-1\right)}=\eta_{z,t}\Re(H^{\left(j-1\right)}_{w_{j}}(A-w_{j}))$, for which we have
\begin{align*}
    \|M^{\left(j-1\right)}\|&\leq\frac{Cr}{Nt},\\
    \tr(M^{\left(j-1\right)})^{2}&\leq\frac{Cr^{2}}{N}.
\end{align*}
We can do the same for $B^{\left(j-1\right)}=\eta_{z,t}\Im(H^{\left(j-1\right)}_{w_{j}}(A-w_{j}))$ and combine the two to obtain \eqref{eq:betaConc} {after choosing $x=\frac{\log N}{\sqrt{N}}$ and $r=\sqrt{N}\log N$ in Markov's inequality}.
\end{proof}

From this lemma it follows that
\begin{align*}
    \int_{S^{N-j}}\left|\det V_{j}^{*}G^{\left(j-1\right)}_{z}V_{j}\right|^{j}d\nu_{j}\left(\mbf{v}_{j}\right)&=\left[1+O\left(\frac{\log N}{\sqrt{Nt^{2}}}\right)\right]\left(\frac{t^{2}(\wt{\alpha}^{(j-1)}\wt{\gamma}^{(j-1)}+|\wt{\beta}^{(j-1)}|^{2})}{\eta^{2}_{z,t}}\right)^{j}.
\end{align*}
Therefore \Cref{lem:nuExpectation} will follow from
\begin{lemma}
For $j=1,...,k$ we have
\begin{align}
    \left|\wt{\alpha}^{(j-1)}-\alpha_{z,t}\right|&\leq\frac{C}{\sqrt{Nt^{3}}},\label{eq:alphaDiff}\\
    \left|\wt{\beta}^{(j-1)}-\beta_{z,t}\right|&\leq\frac{C}{\sqrt{Nt^{3}}},\label{eq:betaDiff}\\
    \left|\wt{\gamma}^{(j-1)}-\gamma_{z,t}\right|&\leq\frac{C}{\sqrt{Nt^{3}}},\label{eq:gammaDiff}
\end{align}
uniformly in $\mbf{v}_{l},\,l=1,...,j-1$.
\end{lemma}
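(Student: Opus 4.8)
The plan is to put each of $\wt{\alpha}^{(j-1)},\wt{\beta}^{(j-1)},\wt{\gamma}^{(j-1)}$ into resolvent form using the identities following \eqref{eq:sigma}: $\wt{\alpha}^{(j-1)}=-\langle G^{(j-1)}_{w_{j}}E^{(j-1)}G^{(j-1)}_{w_{j}}E^{(j-1)*}\rangle$, $\wt{\beta}^{(j-1)}=\tfrac{1}{2i}\langle(G^{(j-1)}_{w_{j}})^{2}E^{(j-1)*}\rangle$ and $\wt{\gamma}^{(j-1)}=\tfrac{1}{4i\eta_{z,t}}\langle G^{(j-1)}_{w_{j}}(1-i\eta_{z,t}G^{(j-1)}_{w_{j}})\rangle$, all resolvents at $\eta_{z,t}$, where $w_{j}=z+z_{j}/\sqrt{N\sigma_{z,t}}$; the targets $\alpha_{z,t},\beta_{z,t},\gamma_{z,t}$ are the same expressions with $(A^{(j-1)},E^{(j-1)},1_{N-j+1},w_{j})$ replaced by $(A,E,1_{N},z)$. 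Since \eqref{eq:A_alpha}--\eqref{eq:A_gamma} give $c\le\sigma_{z,t}\le C$ we have $\|Z_{j}\|\le C$ and $|w_{j}-z|\le C/\sqrt{N}$, and we recall $c_{1}t\le\eta_{z,t}\le c_{2}t$ from \eqref{eq:etaineq}. I would then compare in two steps: (i) replace $A^{(j-1)}$ by $A$ at the fixed parameter $w_{j}$; (ii) shift the parameter from $w_{j}$ to $z$.

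Step (i). Conjugating by the product of Householder reflections that realises $A^{(j-1)}$ as a principal compression of $A$ changes none of these traces, so we may assume $\mc{H}^{(j-1)}_{w_{j}}$ is $\mc{H}_{w_{j}}$ with $2(j-1)$ rows and columns deleted. \Cref{lem:minorresolvent}, applied to $i\eta_{z,t}-\mc{H}_{w_{j}}$ (imaginary part $\eta_{z,t}>0$), then writes the embedded $G^{(j-1)}_{w_{j}}$ as $G_{w_{j}}$ minus a matrix of rank $\le 2(j-1)$ and operator norm $\le 1/\eta_{z,t}$, while the embedded $E^{(j-1)}$ differs from $E$ by a matrix of rank $\le j-1$ and norm $\le 1$. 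Substituting into the (at most trilinear) traces above, each error term is $N^{-1}$ times the trace of a rank-$O(1)$ matrix of operator norm $O(\eta_{z,t}^{-2})$, hence $O(1/(Nt^{2}))$; since $Nt\to\infty$ this is $o(1/\sqrt{Nt^{3}})$, and it is uniform in $\mbf{v}_{1},\dots,\mbf{v}_{j-1}$ because only rank and norm bounds entered. (For $\wt{\gamma}^{(j-1)}$ the interlacing bound \eqref{eq:gammaPert} may be quoted instead.)

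Step (ii). Expand via $G_{w_{j}}=G_{z}-\tfrac{1}{\sqrt{N\sigma_{z,t}}}G_{z}Z_{j}G_{w_{j}}$. To first order in $(N\sigma_{z,t})^{-1/2}$ the correction is that factor times a trace of three copies of $G_{z}$ alternating with three matrices from $\{Z_{j},E,E^{*},1\}$. For $\wt{\alpha}^{(j-1)}$ these are $Z_{j},E,E^{*}$, all off-diagonal, so after expanding $Z_{j}$ the bound \eqref{eq:A3} (resp.\ \eqref{eq:trace2} with $n=3$) gives $C/\eta_{z,t}$ (resp.\ $C\eta_{z,t}^{-3/2}$), hence a correction $O(1/(\sqrt{N}t))$ (resp.\ $O(1/\sqrt{Nt^{3}})$); this is the binding term and the source of the stated rate and of its improvement under \eqref{eq:A3}. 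For $\wt{\gamma}^{(j-1)}$ and $\wt{\beta}^{(j-1)}$ the relevant trace is $\langle G_{z}^{3}Z_{j}\rangle$, resp.\ $\langle(G_{z}^{2}Z_{j}G_{z}+G_{z}Z_{j}G_{z}^{2})E^{*}\rangle$, which carries identity insertions and is not covered by \eqref{eq:trace2}; here I would use $G_{z}^{2}=-i\partial_{\eta}G_{z}$ to rewrite them as $-\tfrac{1}{2}\partial_{\eta}^{2}\langle G_{z}Z_{j}\rangle$, resp.\ $-i\partial_{\eta}\langle G_{z}Z_{j}G_{z}E^{*}\rangle$, and invoke the uniform boundedness of $\beta_{z}(\eta)$ (from \eqref{eq:A_beta}) and of $\langle G_{z}Z_{j}G_{z}E^{*}\rangle$ (from \eqref{eq:A2}), together with a Cauchy estimate in $\eta$ on a disc of radius comparable to $\eta_{z,t}$, to gain a factor $\eta_{z,t}^{-1}$ per $\eta$-derivative; this yields $|\langle G_{z}^{3}Z_{j}\rangle|=O(\eta_{z,t}^{-1})$ and likewise for the $\wt{\beta}$-trace, so both corrections are $O(1/(\sqrt{N}t))=o(1/\sqrt{Nt^{3}})$. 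The second- and higher-order terms of the expansion are handled by crude Hilbert--Schmidt bounds like $\tr|G_{z}Z_{j}|^{2}=|z_{j}|^{2}\tr|G_{z}|^{2}\le CN/\eta_{z,t}$ and $\tr|G_{z}^{2}Z_{j}|^{2}\le CN/\eta_{z,t}^{3}$, giving $O(1/(Nt^{3}))$ or smaller. Collecting everything, with all constants depending only on those in \eqref{eq:A_g}--\eqref{eq:A3}, gives the three bounds uniformly in $\mbf{v}_{1},\dots,\mbf{v}_{j-1}$.

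The step I expect to be the main obstacle is the first-order spectral-shift correction for $\wt{\beta}^{(j-1)}$ and $\wt{\gamma}^{(j-1)}$: because those traces contain identity insertions, the optimised traceless bounds \eqref{eq:A2}--\eqref{eq:A3}/\eqref{eq:trace2} do not apply, and the direct operator-norm or Cauchy--Schwarz estimate is one power of $\eta_{z,t}^{-1}$ too weak; it only gives $O(1/(\sqrt{N}t^{2}))$, which exceeds $1/\sqrt{Nt^{3}}$. Recovering the missing cancellation forces one through the $\partial_{\eta}$-identities and hence through analytic control of the resolvents in the spectral parameter, i.e.\ propagating the a priori bounds \eqref{eq:A_beta} and \eqref{eq:A2} to a complex neighbourhood of $\eta_{z,t}$; once that is in place the remaining estimates are routine but require careful bookkeeping of the powers of $t$.
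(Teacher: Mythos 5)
Your two-step scheme (minor-to-full comparison via \Cref{lem:minorresolvent}/interlacing, then a resolvent-identity shift $w_{j}\to z$) is exactly the paper's route, and your Step (i) is fine. The problem is in your treatment of the first-order shift corrections for $\wt{\beta}^{(j-1)}$ and $\wt{\gamma}^{(j-1)}$. You correctly note that traces such as $\langle G_{z}^{3}Z_{j}\rangle$ and $\langle G_{z}^{2}EG_{z}E^{*}\rangle$ carry an identity insertion and so are not literally covered by \eqref{eq:trace2}; but you then conclude that ``the direct Cauchy--Schwarz estimate is one power of $\eta_{z,t}^{-1}$ too weak,'' which is false, and you repair it with a device that does not work under the stated hypotheses. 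The Cauchy estimate in $\eta$ needs $|\beta_{z}(\eta)|$ (resp.\ $\alpha_{z}(\eta)$) to be $O(1)$ on a complex disc of radius $\asymp\eta_{z,t}$ about $\eta_{z,t}$, whereas \eqref{eq:A_beta}, \eqref{eq:A2} are hypotheses on the real axis only; on the disc you control $\|G_{z}(\eta)\|$ merely by $1/\Re\eta\asymp1/\eta_{z,t}$, so Cauchy gives $|\partial_{\eta}\beta_{z}|=O(\eta_{z,t}^{-3})$, not $O(\eta_{z,t}^{-1})$. So as written there is a genuine gap.

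In fact the gap closes without any complex-analytic input, by choosing the Cauchy--Schwarz split so that the bare $G_{z}$ sits next to the one factor that produces a bounded quantity. The split you implicitly used, $\langle G_{z}^{2}\cdot(G_{z}Z_{j})\rangle$, is wasteful: it produces $\langle G_{z}^{2}(G_{z}^{*})^{2}\rangle=\eta_{z,t}^{-2}\langle(\Im G_{z})^{2}\rangle=O(\gamma_{z,t}/\eta_{z,t}^{2})=O(\eta_{z,t}^{-3})$ in one factor, hence $O(\eta_{z,t}^{-2})$ overall. Split instead as $\langle(G_{z}Z_{j}G_{z})\cdot G_{z}\rangle$. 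Then, using the Ward identity $G_{z}G_{z}^{*}=\Im G_{z}/\eta_{z,t}$ and the fact that $Z_{j}(\Im G_{z})Z_{j}^{*}$ is again block-diagonal,
\begin{align*}
\langle(G_{z}Z_{j}G_{z})(G_{z}Z_{j}G_{z})^{*}\rangle
&=\frac{1}{\eta_{z,t}^{2}}\left\langle\Im(G_{z})Z_{j}\Im(G_{z})Z_{j}^{*}\right\rangle
=O\!\left(\frac{|z_{j}|^{2}\alpha_{z,t}}{\eta_{z,t}^{2}}\right)=O(\eta_{z,t}^{-2}),
\end{align*}
while $\langle G_{z}^{*}G_{z}\rangle=\langle\Im G_{z}\rangle/\eta_{z,t}=O(g_{z,t}/\eta_{z,t})=O(\eta_{z,t}^{-1})$, so $|\langle G_{z}^{3}Z_{j}\rangle|=O(\eta_{z,t}^{-3/2})$. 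The same pairing works for the $\beta$-trace: writing $\langle G_{z}^{2}EG_{z}E^{*}\rangle=\langle(G_{z}EG_{z})(E^{*}G_{z})\rangle$ gives $\langle AA^{*}\rangle=\eta_{z,t}^{-2}\langle\Im(G_{z})E\Im(G_{z})E^{*}\rangle=O(\alpha_{z,t}/\eta_{z,t}^{2})$ and $\langle B^{*}B\rangle=\langle\Im(G_{z})E_{+}\rangle/\eta_{z,t}=O(g_{z,t}/\eta_{z,t})$, again $O(\eta_{z,t}^{-3/2})$. In both cases the key is to pay the $\alpha_{z,t}$-bound $O(1)$ and the $g_{z,t}$-bound $O(1)$ rather than the $\gamma_{z,t}$-bound $O(1/\eta_{z,t})$; this is the same mechanism already present in the proof of \eqref{eq:trace2} for $n=3$, and it yields the corrections $O(1/\sqrt{Nt^{3}})$ without invoking any analyticity in $\eta$. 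With this substitution your argument matches the paper's.
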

\begin{proof}
We can use interlacing or \Cref{lem:minorresolvent} to replace $A^{\left(j-1\right)}$ with $A$ and then use the resolvent identity to replace $w_{j}$ with $z$. For $\wt{\gamma}^{(j-1)}$ we find
\begin{align*}
    \wt{\gamma}^{(j-1)}&=\frac{1}{2}\left\langle\Im G^{\left(j-1\right)}_{w_{j}})^{2}\right\rangle\\
    &=\frac{1}{2\eta_{z,t}}\Im\left\langle G_{w_{j}}\right\rangle-\frac{1}{2}\Re\left\langle G^{2}_{w_{j}}\right\rangle+O\left(\frac{1}{t^{2}}\right)\\
    &=\frac{1}{2}\left\langle(\Im G_{z})^{2}\right\rangle+\frac{1}{2\sqrt{N}\eta_{z,t}}\Im\left\langle G_{z}Z_{j}G_{w_{j}}\right\rangle\\&-\frac{1}{2}\Re\left\langle\frac{2}{\sqrt{N}}G^{2}_{z}Z_{j}G_{w_{j}}+\frac{1}{N}(G_{z}Z_{j}G_{w})^{2}\right\rangle+O\left(\frac{1}{t^{2}}\right)\\
    &=\gamma_{z,t}+O\left(\frac{1}{\sqrt{Nt^{3}}}\right).
\end{align*}
For $\wt{\alpha}^{(j-1)}$ we find
\begin{align*}
    \wt{\alpha}^{(j-1)}&=-\langle G^{(j-1)}_{w_{j}}E^{(j)}G^{(j-1)}_{w_{j}}E^{(j)*}\rangle\\
    &=-\langle G_{w_{j}}E G_{w_{j}}E^{*}\rangle+O\left(\frac{1}{Nt^{2}}\right)\\
    &=-\langle G_{z}E G_{z}E^{*}\rangle-\frac{1}{\sqrt{NE_{z,t}}}\left\langle G_{w_{j}}Z_{j}G_{z}E G_{z}E^{*}+G_{z}E G_{w_{j}}Z_{j}G_{z}E^{*}\right\rangle\\
    &-\frac{1}{NE_{z,t}}\left\langle G_{w_{j}}Z_{j}G_{z}E G_{w_{j}}Z_{j}G_{z}E^{*}\right\rangle+O\left(\frac{1}{Nt^{2}}\right)\\
    &=\alpha_{z,t}+O\left(\frac{1}{\sqrt{Nt^{3}}}\right),
\end{align*}
and we have the lower bound $\alpha_{z,t}\geq c$ from \eqref{eq:A_alpha}. For $\wt{\beta}^{(j-1)}$, \eqref{eq:betaDiff} follows by the same steps.
\end{proof}
As a result of this lemma and the lower bound $\gamma_{z,t}\geq C/t$ (which follows directly from \eqref{eq:A_gamma}) we find
\begin{align*}
    \wt{\sigma}^{(j-1)}&:=\wt{\alpha}^{(j-1)}+\frac{|\wt{\beta}^{(j-1)}|^{2}}{\wt{\gamma}^{(j-1)}}\\
    &=\alpha_{z,t}+\frac{|\beta_{z,t}|^{2}}{\gamma_{z,t}}+O\left(\frac{1}{\sqrt{Nt^{3}}}\right)\\
    &=\sigma_{z,t}+O\left(\frac{1}{\sqrt{Nt^{3}}}\right).
\end{align*}
{Thus we have
\begin{align*}
    \left(\frac{\eta^{2}_{z,t}}{t^{2}\gamma_{z,t}\sigma_{z,t}}\right)^{i}\int_{S^{N-i}}|\det V_{i}^{*}G^{(i-1)}_{z}V_{i}|^{i}d\nu_{i}(\mbf{v}_{i})&=\left(\frac{\eta^{2}_{z,t}}{t^{2}\gamma_{z,t}\sigma_{z,t}}\right)^{i}\left(\frac{t^{2}\wt{\gamma}^{(i-1)}\wt{\sigma}^{(i-1)}}{\eta_{z,t}^{2}}\right)^{i}\\
    &=\left(\frac{\wt{\gamma}^{(i-1)}\wt{\sigma}^{(i-1)}}{\gamma_{z,t}\sigma_{z,t}}\right)^{i}\\
    &=1+O\left(\frac{1}{\sqrt{Nt^{3}}}\right),
\end{align*}}
which proves \eqref{eq:qformexpectation}. Note that if we assume \eqref{eq:A3} then the error $|\wt{\alpha}^{(j-1)}-\alpha_{z,t}|$ and hence the error $|\wt{\sigma}^{(j-1)}-\sigma_{z,t}|$ becomes $O\left(1/\sqrt{Nt^{2}}\right)$.

\section{Proof of \Cref{thm2}}
To prove \Cref{thm2}, we first need to show that if $A$ has i.i.d. complex entries with mean zero, variance one and finite moments then it satisfies the assumptions of \Cref{thm1}. Let
\begin{align}
     M^{\left(1\right)}_{z}\left(\eta\right)&=\begin{pmatrix} m_{z}\left(\eta\right)& -zu_{z}\left(\eta\right)\\ -\bar{z}u_{z}\left(\eta\right)&m_{z}\left(\eta\right)\end{pmatrix},\label{eq:Mz}
\end{align}
where $m_{z}$ is positive imaginary solution to the cubic equation
\begin{align}
    -\frac{1}{m_{z}}&=i\eta+m_{z}-\frac{|z|^{2}}{i\eta+m_{z}},
\end{align}
and
\begin{align}
    u_{z}\left(\eta\right)&=\frac{m_{z}\left(\eta\right)}{i\eta+m_{z}\left(\eta\right)}.
\end{align}
The single resolvent local law states that, on a set of high probability, we can replace $G_{z}$ by $M^{\left(1\right)}_{z}$. We will use the statement of Alt, Erd\H{o}s and Kr\"{u}ger \cite{alt_spectral_2021}.
\begin{prop}[Averaged single resolvent local law (Theorem 5.2 in \cite{alt_spectral_2021})]\label{prop:singlell}
Let $z\in\mbb{C}$, $B$ be deterministic, $\epsilon>0$ and $N^{-1+\epsilon}<\eta<N^{100}$. Then for any $\sigma>0$ and $D>0$ we have
\begin{align}
    \textup{Pr}\left(\Big|\left\langle(G_{z}\left(\eta\right)- M_{z}\left(\eta\right))B\right\rangle\Big|>\frac{N^{\sigma}\|B\|}{N\eta}\right)&\leq N^{-D},
\end{align}
for $N>N\left(\sigma,D\right)$.
\end{prop}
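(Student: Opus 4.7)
The plan is to follow the matrix Dyson equation (MDE) approach standard for single resolvent local laws. Because $\mc{H}_{z}$ is the Hermitisation of an i.i.d. matrix with entries of variance $1/N$, the self-energy structure is essentially scalar on the four $N\times N$ blocks, which explains the block-constant form of the deterministic approximation \eqref{eq:Mz} with only the two scalar unknowns $m_{z},u_{z}$.

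First, I would derive the approximate MDE
\begin{align*}
    -G_{z}^{-1} &= i\eta - Z + \mc{S}[G_{z}] + \mc{E},
\end{align*}
where $Z=\begin{pmatrix}0&z\\\bar z&0\end{pmatrix}\otimes 1_{N}$, $\mc{S}[R]=E_{+}\langle E_{-}RE_{-}\rangle+E_{-}\langle E_{+}RE_{+}\rangle$ encodes the independence of the entries, and $\mc{E}$ is a small fluctuation error bounded via a cumulant expansion (finite moments suffice). Second, I would check that $M_{z}$ solves the deterministic MDE by taking block traces: the trace of the $E_{+}$ block yields the cubic equation for $m_{z}$, while the off-diagonal block forces $u_{z}=m_{z}/(i\eta+m_{z})$; uniqueness in the positive-imaginary class follows from monotonicity of the cubic. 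Third, I would prove stability, namely that the linearisation $\mc{L}[R]=R+M_{z}\mc{S}[R]M_{z}$ is bounded invertible uniformly for $|z|<1$ and $\eta\in[N^{-1+\epsilon},N^{100}]$, using that $|z|<1$ keeps us strictly inside the bulk and away from the spectral edge. Finally I would estimate $\langle(G_{z}-M_{z})B\rangle$ by combining stability with fluctuation averaging, which extracts an additional factor $1/\sqrt{N\eta}$ from cancellations in the trace against the deterministic $B$, and close the argument with a continuity bootstrap in $\eta$ descending from $\eta\sim 1$.

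The main obstacle is the upgrade from an isotropic pointwise bound of order $1/\sqrt{N\eta}$ to the averaged bound $N^{\sigma}/(N\eta)$. This gain is exactly the content of the fluctuation averaging lemma, which in the Hermitian setting is a now-standard argument combining polynomial resolvent expansions with careful combinatorial bookkeeping of cumulants; the adaptation to the Hermitisation $\mc{H}_{z}$ requires tracking cross-block structure carefully, since the off-diagonal blocks carry the non-trivial spectral information. In practice the statement is invoked directly from \cite{alt_spectral_2021}, so no self-contained derivation is required here, but the strategy above indicates what any independent proof must engage with.
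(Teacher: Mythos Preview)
The paper does not prove this proposition at all: it is stated as a direct citation of Theorem~5.2 in \cite{alt_spectral_2021} and used as a black box. You correctly recognise this in your final paragraph, so your proposal is consistent with the paper's treatment. The preceding sketch of the MDE/stability/fluctuation-averaging route is a reasonable outline of how such a local law is proven in the cited reference, but none of it appears in the present paper and none of it is needed here.
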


We also need to consider expressions of the form $\left\langle G_{z}(\eta_{1})B_{1}G_{z}(\eta_{2})B_{2}\right\rangle$. Define the linear operators $S$ and $B_{z}(\eta_{1},\eta_{2})$ on $\mbf{M}_{2N}$ by
\begin{align}
    S\left[\begin{pmatrix}X&Y\\ W&Z\end{pmatrix}\right]&=\begin{pmatrix}\left\langle Z\right\rangle&0\\0&\left\langle X\right\rangle\end{pmatrix},\\
    B_{z}\left(\eta_{1},\eta_{2}\right)(\mc{X})&=1-M^{\left(1\right)}_{z}\left(\eta_{1}\right)S[\mc{X}]M^{\left(1\right)}_{z}\left(\eta_{2}\right),
\end{align}
and let
\begin{align}
    M^{\left(2\right)}_{z}\left(\eta_{1},\eta_{2};\mc{X}\right)&=\left[B_{z}\left(\eta_{1},\eta_{2}\right)\right]^{-1}[M^{\left(1\right)}_{z}\left(\eta_{1}\right)\mc{X}M^{\left(1\right)}_{z}\left(\eta_{2}\right)].
\end{align}
The two-resolvent local law has recently been proven by Cipolloni, Erd\H{o}s and Schr\"{o}der \cite{cipolloni_mesoscopic_2023}. We state here a special case of their result:
\begin{prop}[Averaged two-resolvent local law (Theorem 3.2 in \cite{cipolloni_mesoscopic_2023})]\label{prop:doublell}
Let $z\in\mbb{C}$, $B_{j}$, $j=1,2$, be deterministic, $\epsilon>0$ and $N^{-1+\epsilon}<\eta<N^{100}$. Then for any $\sigma>0$ and $D>0$ we have
\begin{align}
    \textup{Pr}\left(\Big|\left\langle(G_{z}\left(\eta_{1}\right)B_{1} G_{z}\left(\eta_{2}\right)- M^{\left(2\right)}_{z}\left(\eta_{1},\eta_{2};B_{1}\right)B_{2}\right\rangle\Big|>\frac{N^{\sigma}\|B_{1}\|\cdot\|B_{2}\|}{N\eta^{2}}\right)&\leq N^{-D},
\end{align}
for $N>N\left(\sigma,D\right)$.
\end{prop}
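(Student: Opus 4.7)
The plan is to derive an approximate matrix Dyson equation for the product $G_{z}(\eta_{1})B_{1}G_{z}(\eta_{2})$ by a cumulant expansion in the entries of $A$, invert the associated stability operator $B_{z}(\eta_{1},\eta_{2})$ to identify the deterministic leading term $M^{(2)}_{z}(\eta_{1},\eta_{2};B_{1})$, and bootstrap the resulting error from the naive size $1/(N\eta)$ down to the optimal $1/(N\eta^{2})$ using fluctuation averaging. The single-resolvent law in \Cref{prop:singlell} supplies the a priori input that $G_{z}(\eta)$ is close to $M^{(1)}_{z}(\eta)$ with overwhelming probability.

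First I would start from the identity $\mathcal{H}_{z}G_{z}(\eta)=1+i\eta G_{z}(\eta)$ and apply the cumulant expansion $\mathbb{E}[A_{ab}F(A)]=\sum_{q\ge 1}\tfrac{\kappa_{q+1}^{(ab)}}{q!}\mathbb{E}[\partial_{ab}^{q}F(A)]$ to $F=G_{z}(\eta_{1})B_{1}G_{z}(\eta_{2})B_{2}$. The second cumulant contribution reproduces exactly the operator $S[\,\cdot\,]$ sandwiched between two resolvents, while the higher cumulants contribute $N^{-1/2}$-smaller terms controlled by the assumption of finite moments together with the trivial bound $\|G_{z}(\eta)\|\le 1/\eta$. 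Taking the normalised trace and using the single-resolvent law to replace the outer $G$'s by $M^{(1)}_{z}$ produces the approximate equation
\begin{align*}
\langle G_{z}(\eta_{1})B_{1}G_{z}(\eta_{2})B_{2}\rangle &=\langle M^{(1)}_{z}(\eta_{1})B_{1}M^{(1)}_{z}(\eta_{2})B_{2}\rangle\\
&\quad+\langle M^{(1)}_{z}(\eta_{1})S[G_{z}(\eta_{1})B_{1}G_{z}(\eta_{2})]M^{(1)}_{z}(\eta_{2})B_{2}\rangle+\mathcal{E},
\end{align*}
which, after application of $[B_{z}(\eta_{1},\eta_{2})]^{-1}$, yields the claimed deterministic approximation.

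The second step is the quantitative high-probability bound on $\mathcal{E}$. A direct estimate using $\|G_{z}(\eta)\|\le 1/\eta$ only delivers $\mathcal{E}=O(N^{\sigma}/(N\eta))$; to improve this to $1/(N\eta^{2})$ one bootstraps, feeding the weak bound back into the higher-cumulant remainders and exploiting the additional averaging provided by the trace and the identities $\Im G_{z}(\eta)=\eta G_{z}(\eta)G_{z}(\eta)^{*}$. This is the standard fluctuation averaging mechanism, carried out for non-Hermitian Hermitisations in the references cited just before the statement; the $1/\eta^{2}$ gain corresponds to one factor of $1/\eta$ per resolvent.

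The main obstacle is the uniform invertibility of the stability operator $B_{z}(\eta_{1},\eta_{2})$ in the bulk regime $|z|<1$ with $\eta_{j}\ge N^{-1+\epsilon}$. Because $S$ has image supported on the block-diagonal part, $B_{z}$ restricts to a $2\times 2$ matrix on $\mathrm{span}\{E_{+},E_{-}\}$ whose entries are explicit polynomials in $m_{z}(\eta_{j})$ and $u_{z}(\eta_{j})$; one must verify that its eigenvalues stay a constant distance from $0$. Using the cubic equation defining $m_{z}$ and the bound $\Im m_{z}(\eta)\ge c$ for $|z|<1$ and $\eta$ not too large, this spectral gap follows from a short but slightly delicate calculation, and is the reason that the restriction $|z|<1$ (i.e.\ the bulk) enters the statement.
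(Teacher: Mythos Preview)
The paper does not prove this proposition; it is quoted verbatim as Theorem~3.2 of \cite{cipolloni_mesoscopic_2023} and used as a black box. So there is no ``paper's own proof'' to compare against, and your proposal goes well beyond what the present paper actually does.

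That said, your sketch has a genuine gap in the stability-operator step. You assert that the eigenvalues of $B_{z}(\eta_{1},\eta_{2})$ ``stay a constant distance from $0$'' in the bulk. This is false: on the block-diagonal subspace, $B_{z}$ acts (in the notation of \Cref{lem:M1M2}) as the $2\times2$ matrix $T_{1}$ with eigenvalues $1-|z|^{2}u_{1}u_{2}\pm m_{1}m_{2}$. Using \eqref{eq:masymp}--\eqref{eq:uasymp} one finds that for small $\eta$ one of these eigenvalues is $2(1-|z|^{2})+O(\eta)$ but the other is $O(\eta)$. Thus $[B_{z}]^{-1}$ has norm of order $1/\eta$, not $O(1)$, and a naive inversion of your approximate equation loses a full factor of $1/\eta$. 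In the actual proof of the cited result this near-singularity is exactly what produces the $1/(N\eta^{2})$ error (rather than $1/(N\eta)$) and must be tracked carefully: one shows either that the error term has a small component along the bad direction, or one organises the bootstrap so that the $1/\eta$ loss from $[B_{z}]^{-1}$ is compensated by an extra $\eta$ gain in the fluctuation term. Your outline conflates this with the fluctuation-averaging improvement and therefore misidentifies where the second $1/\eta$ in the final bound comes from.
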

Theorem 3.2 in \cite{cipolloni_mesoscopic_2023} allows for resolvents $ G_{z_{j}}$ at two different $z_{1},z_{2}$ and any determinisitic $B_{1},B_{2}$, but we only need to consider $z_{1}=z_{2}$ and $B_{1},B_{2}\in\{1_{2N},E,E^{*}\}$. Both \Cref{prop:singlell} and \Cref{prop:doublell} can be extended to hold uniformly in $z$ in any compact subset of $\mbb{C}$ by taking a union bound over a sufficiently fine net, using the fact that $z\mapsto G_{z}(\eta)$ is Lipschitz with constant bounded by $\eta^{-2}$.

In the following lemma we collect some properties of $ M^{\left(1\right)}_{z}$ and $ M^{\left(2\right)}_{z}$.
\begin{lemma}\label{lem:M1M2}
When $|z|<1$, we have
\begin{align}
    m_{z}\left(\eta\right)&=i\left[\sqrt{1-|z|^{2}}+\frac{2|z|^{2}-1}{1-|z|^{2}}\eta+O\left(\eta^{2}\right)\right],\label{eq:masymp}\\
    u_{z}\left(\eta\right)&=1-\frac{\eta}{\sqrt{1-|z|^{2}}}+O\left(\eta^{2}\right)\label{eq:uasymp},
\end{align}
and
\begin{align}
    \sigma_{z}\left(\eta\right)&=1+O\left(\eta\right).\label{eq:sigmaasymp}
\end{align}
Moreover, if $|z|<1-\omega$ there is a $C>0$ depending on $\omega$ such that
\begin{align}
    \left\|M^{(2)}_{z}(\eta_{1},\eta_{2};E)\right\|&\leq C,\label{eq:M2Norm}
\end{align}
where we recall the definition of $E$ in \eqref{eq:rho}.
\end{lemma}
\begin{proof}
Eq. \eqref{eq:masymp} and \eqref{eq:uasymp} follow by applying perturbation theory to the cubic equation defining $m_{z}\left(\eta\right)$. To calculate $\sigma_{z}\left(\eta\right)$ we need to analyse $ M^{\left(2\right)}_{z}\left(\eta_{1},\eta_{2};B\right)$, for which we follow Lemma 6.1 in \cite{cipolloni_central_2023}. We need only consider the invariant subspace consisting of $2\times2$ block matrices whose blocks are multiples of the identity. If we map this to $\mbb{C}^{4}$ by the identification
\begin{align*}
    \begin{pmatrix}a&b\\c&d\end{pmatrix}&\mapsto\left(a\,d\,b\,c\right)^{T},
\end{align*}
then $B_{z}\left(\eta_{1},\eta_{2}\right)$ has the matrix representation
\begin{align*}
    B_{z}\left(\eta_{1},\eta_{2}\right)&=\begin{pmatrix}T_{1}&0\\T_{2}&1\end{pmatrix},
\end{align*}
where, with $m_{j}:=m_{z}\left(\eta_{j}\right)$ and $u_{j}:=u_{z}\left(\eta_{j}\right)$,
\begin{align*}
    T_{1}&=\begin{pmatrix}1-|z|^{2}u_{1}u_{2}&-m_{1}m_{2}\\-m_{1}m_{2}&1-|z|^{2}u_{1}u_{2}\end{pmatrix},
\end{align*}
and
\begin{align*}
    T_{2}&=\begin{pmatrix}zm_{2}u_{1}&zm_{1}u_{2}\\\bar{z}m_{1}u_{2}&\bar{z}m_{2}u_{1}\end{pmatrix}.
\end{align*}
The inverse of $B_{z}$ is now easily calculated:
\begin{align*}
    [B_{z}\left(\eta_{1},\eta_{2}\right)]^{-1}&=\begin{pmatrix}T_{1}^{-1}&0\\-T_{2}T_{1}^{-1}&1\end{pmatrix}.
\end{align*}
Observe that $T^{-1}_{1}$ has two eigenvalues $\lambda_{\pm}=(1-|z|^{2}u_{1}u_{2}\pm m_{1}m_{2})^{-1}$ with corresponding eigenvectors $(1,\mp1)^{T}$. Thus $[B_{z}(\eta_{1},\eta_{2})]^{-1}$ has two eigenvalues $\lambda_{\pm}$ with eigenvectors $E_{+}\mp E_{-}$ (recall the definitions in \eqref{eq:rhoPlus} and \eqref{eq:rhoMinus}); the remaining eigenvalues are all equal to 1. From the asymptotics in \eqref{eq:masymp} and \eqref{eq:uasymp}, we have $\lambda_{-}<C$ when $|z|<1-\omega$ for some $\omega>0$ and $\lambda_{+}<C(\eta_{1}+\eta_{2})^{-1}$. Now note that $|u_{1}-u_{2}|\leq C(\eta_{1}+\eta_{2})$ and $|m_{1}-m_{2}|\leq C(\eta_{1}+\eta_{2})$ and so
\begin{align*}
    \left|\tr((E_{+}-E_{-})M^{(1)}_{z}(\eta_{1})EM^{(1)}_{z}(\eta_{2}))\right|&=|z(m_{1}u_{2}-m_{2}u_{1})|\leq C(\eta_{1}+\eta_{2}),
\end{align*}
i.e. the component of $M^{(1)}_{z}(\eta_{1})EM^{(2)}_{z}(\eta_{2})$ in the direction of the eigenvector of $[B_{z}(\eta_{1},\eta_{2})]^{-1}$ corresponding to the largest eigenvalue $\lambda_{+}$ is $O(\eta_{1}+\eta_{2})$. This implies \eqref{eq:M2Norm}.

With these considerations and the asymptotics in \eqref{eq:masymp} and \eqref{eq:uasymp}, we can evaluate traces of the form $\tr G_{z}\left(\eta_{1}\right)B_{1} G_{z}\left(\eta_{2}\right)B_{2}$. We will find
\begin{align*}
    \alpha_{z}\left(\eta\right)&=1+O\left(\eta\right),\\
    \beta_{z}\left(\eta\right)&=O(1),\\
    \gamma_{z}\left(\eta\right)&=O(\eta^{-1}),
\end{align*}
and hence
\begin{align*}
    \sigma_{z}\left(\eta\right)&=\alpha^{2}_{z}\left(\eta\right)+\frac{|\beta_{z}\left(\eta\right)|^{2}}{\gamma^{2}_{z}\left(\eta\right)}\\
    &=1+O\left(\eta\right).
\end{align*}
\end{proof}

From this it follows immediately that, if $\mc{E}$ denotes the event that \eqref{eq:A_g}, \eqref{eq:A_alpha}, \eqref{eq:A_beta}, \eqref{eq:A_gamma} and \eqref{eq:A2} hold, then $P\left(\mc{E}\right)\geq1-N^{-D}$ for any $D>0$. Now let $t=N^{-1/3+\epsilon}$, construct the Gauss-divisible matrix $A+\sqrt{t}B$ and define the statistic
\begin{align}
    S_{N,z}\left(f\right)&=\sum_{i_{1}\neq\cdots\neq i_{k}}f\left(\sqrt{N}\left(z-z_{1}\right),...,\sqrt{N}\left(z-z_{k}\right)\right).
\end{align}
Then since $|S_{N,z}|<N^{k}$ we have
\begin{align*}
    \mbb{E}\left[S_{N,z}\left(f\right)\right]&=\mbb{E}[1_{\mc{E}}S_{N,z}\left(f\right)]+O(N^{k-D})\\
    &=\mbb{E}[\mbb{E}[S_{N,z}\left(f\right)|A\in\mc{E}]]+O\left(N^{k-D}\right).
\end{align*}
The error term can be neglected by choosing $D>k$. By \Cref{thm1} and \eqref{eq:sigmaasymp} we have
\begin{align*}
    \mbb{E}[S_{N,z}\left(f\right)|A\in\mc{E}]&=\frac{1}{N^{k}}\int_{\mbb{C}^{k}}f\left(\mbf{z}\right)\rho^{\left(k\right)}_{N}\left(z+\frac{\mbf{z}}{\sqrt{N}};A\right)d\mbf{z}\\
    &=\frac{1}{\left(N\sigma_{z,t}\right)^{k}}\int_{\mbb{C}^{k}}f\left(\sqrt{\sigma_{z,t}}\mbf{z}\right)\rho^{\left(k\right)}_{N}\left(z+\frac{\mbf{z}}{\sqrt{N\sigma_{z,t}}};A\right)d\mbf{z}\\
    &\to\int_{\mbb{C}^{k}}f\left(\mbf{z}\right)\rho^{\left(k\right)}_{GinUE}\left(\mbf{z}\right)d\mbf{z},
\end{align*}
and so the same limit holds for $\mbb{E}[S_{N,z}\left(f\right)]$. Now we use the four moment theorem (Theorem 2 in \cite{tao_random_2015}), which states that if $A$ and $\wt{A}$ have i.i.d. entries with finite moments that match up to order four then the correlation functions tend to the same limit. The statement in \cite{tao_random_2015} requires subexponential entries and both $A$ and $\wt{A}$ to match moments to third order with a Gaussian matrix, but this is due to the use of a local law proven in the same paper (Theorem 20) under these conditions. However, as noted in Remarks 3 and 22 of \cite{tao_random_2015}, we can replace this by the local law of \cite{bourgade_local_2014}, which does not require these conditions. Moreover, upon inspection of the proof one observes that the moment matching condition can be relaxed (as was done for Hermitian matrices in \cite{erdos_bulk_2010-1}) to allow for an error of $O(N^{-1/2-\epsilon})$ for the third moment and $O(N^{-\epsilon})$ for the fourth moment, for any $\epsilon>0$. We choose an $\wt{A}$ such that: i) it has i.i.d. entries with finite moments; ii) $A$ and $\frac{1}{\sqrt{1+t}}\left(\wt{A}+\sqrt{Nt}B\right)$ match moments up to third order {(note that here we have $\sqrt{N}B$ since the entries of $B$ have variance $N^{-1}$)}; iii) the fourth moment differs by $O(t)=O(N^{-1/3+\epsilon})$. The existence of such an $\wt{A}$ follows by Lemma 3.4 in \cite{erdos_universality_2011-2}.

\paragraph{\small Acknowledgements}
\small{We thank Benjamin Landon for pointing out that the conditions in Theorem 1.2 could be improved using Lemma 3.4 in \cite{erdos_universality_2011-2}. We also thank Dang-Zheng Liu for pointing out the references \cite{grela_diffusion_2016} and \cite{liu_phase_2022}. We gratefully acknowledge the support of the Royal Society (grant numbers RF/ERE/210051 and URF/R/221017).}

\bibliographystyle{plain}
\bibliography{revision}

\end{document}